\newcommand{\beqn}{\begin{equation*}}
\newcommand{\eeqn}{\end{equation*}}
\newcommand{\beq}{\begin{equation}}
\newcommand{\eeq}{\end{equation}}
\newcommand{\mhat}{\hat}
\DeclareMathAlphabet{\mathpgoth}{OT1}{pgoth}{m}{n}
\DeclareMathAlphabet{\mathesstixfrak}{U}{esstixfrak}{m}{n}
\DeclareMathAlphabet{\mathboondoxfrak}{U}{BOONDOX-frak}{m}{n}
\definecolor{darkred}{rgb}{0.5,0,0}
\definecolor{darkgreen}{rgb}{0,0.5,0}
\definecolor{darkblue}{rgb}{0,0,0.5}
\newtheorem{theorem}{Theorem}[section]
\newtheorem{corollary}[theorem]{Corollary}
\newtheorem{proposition}[theorem]{Proposition}
\newtheorem{lemma}[theorem]{Lemma}
\newtheorem{lem}[theorem]{}
\theoremstyle{definition}
\newtheorem{definition}[theorem]{Definition}
\theoremstyle{remark}
\newtheorem{remark}[theorem]{Remark}
\newtheorem{example}[theorem]{Example}
\newcommand{\blem}{\begin{lem} \rm}
\newcommand{\elem}{\end{lem}}
\newcommand\M{\mathcal{M}}
\renewcommand\M{\mathcal{M}}
\newcommand\supp{\on{supp}}
\newcommand\XX{\mathbb{X}}
\newcommand{\J}{\mathcal{J}}
\newcommand{\R}{\mathbb{R}}
\renewcommand{\H}{\mathbb{H}}
\newcommand{\RR}{\mathcal{R}}
\newcommand{\C}{\mathbb{C}}
\newcommand{\CP}{\mathbb{C}P}
\newcommand{\cS}{\mathcal{S}}
\newcommand{\cM}{\mathcal{M}}
\newcommand{\cW}{\mathcal{W}}
\newcommand{\Z}{\mathbb{Z}}
\newcommand{\ddt}{\frac{\d}{\d t}}
\newcommand{\dds}{\frac{\d}{\d s}}
\renewcommand{\P}{\mathbb{P}}
\newcommand{\on}{\operatorname}
\newcommand\white{{\includegraphics[width=.05in]{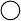}}}
\newcommand\black{{\includegraphics[width=.05in]{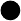}}}
\newcommand{\ainfty}{{$A_\infty$\ }}
\newcommand{\ab}{\ab}
\newcommand{\wt}{\on{wt}}
\newcommand{\red}{{\on{red}}}
\newcommand{\dual}{\vee}
\newcommand{\Edge}{\on{Edge}}
\newcommand{\Cliff}{\on{Cliff}}
\newcommand{\Hopf}{\on{Hopf}}
\newcommand{\Ver}{\on{Vert}}
\newcommand{\End}{\on{End}}
\newcommand{\Aut}{ \on{Aut} }
\newcommand{\Ad}{ \on{Ad} }
\newcommand{\Ind}{ \on{Ind}}
\renewcommand{\ker}{ \on{ker}}
\newcommand{\coker}{ \on{coker}}
\newcommand{\im}{ \on{im}}
\newcommand{\ind}{ \on{ind}}
\newcommand\dirac{/\kern-1.2ex\partial} 
\newcommand\qu{/\kern-.7ex/} 
\newcommand\lqu{\backslash \kern-.7ex \backslash} 
\newcommand\dr{r_+ \kern-.7ex - \kern-.7ex r_-}
\newcommand{\labell}\label
\renewcommand{\d}{{\on{d}}}
\newcommand{\ol}{\overline}
\newcommand{\olp}{\ol{\partial}}
\newcommand\eps{\epsilon}
\newcommand\om{\omega}
\newcommand{\lan}{\langle}
\newcommand{\ran}{\rangle}
\newcommand{\ti}{\tilde}
\newcommand\pt{\on{pt}}
\newcommand\cE{\mathcal{E}}
\newcommand\cR{\mathcal{R}}
\newcommand\cT{\mathcal{T}}
\newcommand\cI{\mathcal{I}}
\newcommand\cA{\mathcal{A}}
\newcommand\mE{\mathcal{E}}
\newcommand\Rep{\on{Rep}}
\newcommand\Map{\on{Map}}
\newcommand\rank{\on{rank}}
\newcommand\Vect{\on{Vect}}
\renewcommand\ul{\underline}
\newcommand\mO{\mathcal{O}}
\renewcommand\Im{\on{Im}}
\newcommand\grad{\on{grad}}
\newcommand\bdefn{\begin{definition}}
\newcommand\edefn{\end{definition}}
\newcommand\bea{\begin{eqnarray*}}
\newcommand\eea{\end{eqnarray*}}
\newcommand\bcv{\left[ \begin{array}{r} }
\newcommand\ecv{\end{array} \right] }
\newcommand\bma{\left[ \begin{array}{l} }
\newcommand\ema{\end{array} \right]}
\newcommand\ben{\begin{enumerate}}
\newcommand\een{\end{enumerate}}
\newcommand\bex{\begin{example}}
\newcommand\bsj{\left\{ \begin{array}{rrr} }
\newcommand\esj{\end{array} \right\}}
\newcommand\eex{\end{example}}
\newcommand\crit{{\on{crit}}}
\newcommand\sx{*\kern-.5ex_X}
\newcommand{\bGamma}{\mathbb{\Gamma}}
\newcommand{\cyl}{\on{cyl}}
\def\mathunderaccent#1{\let\theaccent#1\mathpalette\putaccentunder}
\def\putaccentunder#1#2{\oalign{$#1#2$\crcr\hidewidth \vbox
to.2ex{\hbox{$#1\theaccent{}$}\vss}\hidewidth}}
\renewcommand{\aa}{\mathfrak{a}}
\renewcommand{\ab}{\on{ab}}
\newcommand{\bb}{\mathfrak{b}}
\renewcommand{\aa}{\mathfrak{a}}
\newcommand{\cc}{\mathfrak{c}}
\renewcommand{\ss}{\mathfrak{s}}
\newcommand{\bv}{\mathbb{v}}
\newcommand{\bh}{\mathbb{h}}
\newcommand\cwo[1]{ { \color{darkred}  } }
\newcommand\bigDiamond{\mathop{\mathpalette\bigDi@mond\relax}}
\newcommand\bigDi@mond[2]{%
  \vcenter{\hbox{\m@th
    \scalebox{\ifx#1\displaystyle 2\else1.2\fi}{$#1\Diamond$}%
  }}%
}
\newcommand\bigLozenge{\mathop{\mathpalette\bigL@zenge\relax}}
\newcommand\bigL@zenge[2]{%
  \vcenter{\hbox{\m@th
    \scalebox{\ifx#1\displaystyle 2\else1.2\fi}{$#1\blacklozenge$}%
  }}%
}
\definecolor{darkred}{rgb}{0.5,0,0}
\definecolor{darkpurple}{rgb}{0.5,0,.5}
\definecolor{darkpink}{rgb}{0,.5,0.5}
\definecolor{cyan}{rgb}{.25,0,0.75}
\definecolor{darkgreen}{rgb}{0,0.5,0}
\definecolor{darkblue}{rgb}{0,0,0.5}
\begin{document}

\title[Augmentation varieties and disk potentials ]{Augmentation varieties and disk potentials II}


\author{Kenneth Blakey}
\address{Department of Mathematics,182 Memorial Drive, Cambridge, MA 02139, U.S.A.} \email{kblakey@mit.edu}

\author{Soham Chanda}
\address{Mathematics-Hill Center, Rutgers University, 110
  Frelinghuysen Road, Piscataway, NJ 08854-8019, U.S.A.  } \email{sc1929@math.rutgers.edu}

\author{Yuhan Sun}
\address{Huxley Building,
South Kensington Campus,
Imperial College London,
London,
SW7 2AZ, U.K}\email{yuhan.sun@imperial.ac.uk}

  \author{Chris Woodward}
\address{Mathematics-Hill Center, Rutgers University, 110
  Frelinghuysen Road, Piscataway, NJ 08854-8019, U.S.A.}
\email{ctw@math.rutgers.edu}

\thanks{Chanda and Woodward were partially supported by NSF grant DMS 
  2105417 and Sun was partially supported by the
EPSRC grant EP/W015889/1.
 Any opinions, findings, and conclusions or recommendations 
  expressed in this material are those of the author(s) and do not 
  necessarily reflect the views of the National Science Foundation.  }

\begin{abstract} 
This is the second in a sequence of papers in which we construct Chekanov-Eliashberg algebras for Legendrians in circle-fibered contact manifolds and study the associated augmentation varieties. In this part, we first define the Chekanov-Eliashberg algebra and its Legendrian contact homology.  For a tame Lagrangian cobordism between Legendrians,  we define a chain map between their Chekanov-Eliashberg algebras.  
\end{abstract}

\maketitle

\tableofcontents

\section{Introduction} 

Contact homology as introduced by Eliashberg, Givental, 
and Hofer \cite{egh} is a theory whose differential counts holomorphic curves in the symplectization of a contact manifold.  Its relative version, Legendrian contact homology, has been developed  by Ekholm-Etnyre-Ng-Sullivan \cite{ees:lch, ees:orient, ees:leg, eens:knot}
and related works in situations where the contact manifold fibers over, for example, an exact symplectic manifold.  This paper develops a version of Legendrian contact homology in case that the contact manifold fibers over a monotone symplectic manifold, extending work of Sabloff \cite{sabloff}, Asplund \cite{asplund}, and Dimitroglou-Rizell-Golovko \cite{dr:bs}. We consider a Legendrian $\Lambda$ in a compact contact manifold $Z$ that
is a negatively-curved circle-fibration over a symplectic base $Y$;  we have in mind especially the example that $Z$ is the unit canonical bundle over a Fano projective toric variety $Y$, and $\Lambda$ is the horizontal lift of a Lagrangian in the base.  Another
typical example is the horizontal lift of the Clifford torus in complex projective space, which is a Legendrian torus in the standard contact sphere. We prove the following: 

\begin{theorem}  \label{thm:he} Suppose that $\Lambda $ is a tame Legendrian in a contact manifold
$ (Z, \alpha)$.  Counting punctured holomorphic disks $u: C \to \R \times Z$ bounding $\R \times \Lambda$
defines Legendrian contact homology groups
\[ HE(\Lambda) = \frac{ \on{ker}(\delta)}{ \on{im}(\delta)}, \quad \delta:
CE(\Lambda) \to CE(\Lambda) \]
independent of the choice of almost complex structures and perturbations, and is an invariant of the 
Legendrian isotopy class of $\Lambda$.
\end{theorem} 

The construction is a special case of symplectic field theory with 
Lagrangian boundary conditions, where the contact manifold is circle-fibered. 
The simplifications in this case are comparable to that of genus-zero relative Gromov-Witten theory compared to full symplectic field theory with contact boundary.  In the first paper \cite{BCSW1} in this series, we used Cieliebak-Mohnke \cite{cm:trans} perturbations to regularize the moduli spaces
of treed punctured disks, and proved compactness and orientability of the moduli spaces.   The natural chain complex $CE(\Lambda)$ corresponding
to limits of such disks 
is generated by words in chains on the Reeb chords and chains on the Legendrian, both are realized by critical points of Morse functions; the addition
of these {\em classical generators} does not occur in the version of Legendrian contact
homology over exact symplectic manifolds in Ekholm-Etnyre-Sullivan \cite{ees:leg}.
These classical generators arise because of nodes developing 
along points in the interior of the cobordism, which then lead to trajectories as in 
Ekholm-Ng \cite{ekholmng:higher}; in particular \cite{ekholmng:higher} assigns
augmentations to the Harvey-Lawson filling in the three-dimensional setting of knot contact homology while the construction  here is intended to work in all dimensions, under some restrictions. 

The maps between homology groups associated to different choices of almost complex 
structures are special cases of cobordism maps associated to Lagrangian cobordisms
between Legendrians.   Recall from \cite{BCSW1} that by a {\em  symplectic cobordism} with convex end 
a circle-fibered contact manifold $(Z_+,\alpha_+)$
and concave end a circle-fibered contact manifold $(Z_-,\alpha_-)$
we mean a symplectic manifold with boundary $(\widetilde{X},\partial \widetilde{X}, \omega)$ whose boundary is equipped with a partition
\[ \partial \widetilde{X}= \partial_{+} \widetilde{X}\cup \partial_{-} \widetilde{X} \] 
and diffeomorphisms 
\[\iota_\pm : Z_\pm \to \partial_\pm \widetilde{X}  \]
so that
\begin{enumerate} 
\item the two forms satisfy the proportionality relation
\[ \iota_\pm^* \omega\mid_{\partial_{\pm} \tilde{X}}=  \lambda_\pm  \omega_\pm, \quad \omega_\pm := \d\alpha_\pm
\]
for some positive constants $\lambda_\pm \in \R$, and 
\item the 
isomorphism  between the normal bundle of $Z_\pm$
in $\widetilde{X}$ and the kernel of $D p: TZ_\pm \to TY_\pm, Y_\pm := Z_\pm/S^1$
induced by the symplectic form is orientation preserving resp. reversing for the convex resp. concave boundary. 
\end{enumerate}
Denote by $X$ the interior of $\widetilde{X}$, viewed as a manifold with cylindrical ends, and, abusing terminology, call $X$ the cobordism.  
Cobordisms can be composed in the standard way using the coisotropic embedding theorem.  However, our cobordisms do not form the  morphisms of any category as the trivial cobordism does not admit a symplectic structure satisfying the conditions we require. 

Our cobordisms will often arise by removing symplectic hypersurfaces from compact symplectic manifolds as follows.    Let $\ol{X}$ be a compact symplectic manifold and
\[ Y_-,Y_+ \subset \ol{X} \] 
be codimension two symplectic submanifolds
of $\ol{X}$.  Let $\widetilde{X}$ be the real blow-up 
of $\ol{X}$ along $Y_-,Y_+$, obtained by replacing 
$Y_\pm$ with their unit normal bundles.  
\[ Z_\pm^{\pm 1} \subset  \widetilde{X} \]
where $Z_-^{-1}$ denotes the principal circle bundle 
obtained by reversing the direction of the action on $Z_-$.
We assume that the restriction of the symplectic form on $\ol{X}$
restricts to two-forms on $Y_\pm$ that are  positive multiples of the curvature of $Z_\pm$.
We call the interior $X$ of $\widetilde{X}$ a symplectic cobordism with concave end $Z_-$ and convex end $Z_+$.   This definition of cobordism is essentially
equivalent to the definition in Etnyre-Honda \cite{etnyrehonda}; there are many 
other definitions of symplectic cobordism in the literature.    Conversely, 
given any such $X$ one may obtain a compact symplectic manifold without boundary $\ol{X}$ by Lerman's symplectic cut 
construction \cite{le:sy}.  

Given a symplectic cobordism, a Lagrangian cobordism is a Lagrangian submanifold 
that has the desired Legendrian boundary.  That is, a Lagrangian cobordism  between Legendrians $\Lambda_-, \Lambda_+$ is a Lagrangian
$L \subset X$  whose closure $\ol{L} \subset \widetilde{X}$ has intersection with the boundary $\partial \widetilde{X}$ given by 
\[ L \cap Z_\pm = \Lambda_\pm .\] 
A punctured disk or sphere to $X$ of finite energy extends to an
unpunctured disk or sphere mapping to the compactification $\ol{X}$, 
by removal of singularities. We assume that  the Lagrangian satisfies {\em tameness} conditions detailed in \cite[Definition \ref{I-def:tamepair}]{BCSW1},
which imply that rigid holomorphic spheres and disks with only outgoing punctures do not appear in the boundary of our moduli spaces, and there is  no breaking of holomorphic disks  along closed Reeb orbits. However, the Lagrangian is not necessarily exact. This enables us to discuss the relation between fillings and augmentations in a general setting.

The construction of cobordism maps depends on the existence of bounding chain similar to the definition in  Fukaya-Oh-Ohta-Ono \cite{fooo:part1}.  By definition such a chain is an element 
\[ b \in CE(L,\partial L) \] 
(where $CE(L,\partial L)$ denotes  relative chains with coefficients in some completed group ring, see Equation \eqref{cel}) that solves a {\em Maurer-Cartan equation}:   Let
\[ m_d: CE(L,\partial L)^{\otimes d} \to CE(L, \partial L) \]
denote the counts (defined precisely in Definition \ref{defmc}) of treed holomorphic disks with leaves at critical points of the Morse function in the interior as in  \eqref{cel}.    The Maurer-Cartan equation
\[ \sum_{d \ge 0} m_d(b,\ldots, b)  = 0 \in CE(L,\partial L) .\]
has a space of solutions denoted $MC(L)$.   (The maps $m_d$ do not form an \ainfty algebra, because of the additional terms arising from the cylindrical
ends of the Legendrian.)

\begin{theorem} \label{chainmap} Let $(X,L)$ be a tame cobordism 
with concave end $(Z_-,\Lambda_-)$ and convex end $(Z_+,\Lambda_+)$ as in Theorem \ref{thm:he} equipped with a bounding
chain $b \in MC(L)$. Counting punctured holomorphic disks defines  a chain map
\[ \varphi{(L,b)} : CE(\Lambda_-,\hat{G}(L)) \to CE(\Lambda_+,\mhat{G}(L)), \quad \varphi \delta_-
= \delta_+ \varphi \]
between the complexes associated to the ends over a coefficient ring
$\hat{G}(L)$ given by a completed group ring on $H_2(\ol{X},\ol{L})$, and independent of the choice of almost complex 
structure, Morse functions, and perturbations for disks bounding $L$ up to chain homotopy and a change
in the bounding chain $b \in MC(L)$.
\end{theorem}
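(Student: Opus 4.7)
The plan is to define $\varphi(L,b)$ as a count of rigid treed punctured holomorphic disks $u:C\to\widetilde{X}$ with boundary on $\R_{\le 0}\times\Lambda_-\cup L\cup\R_{\ge 0}\times\Lambda_+$, carrying one asymptotic end at the negative cylindrical end labelled by a generator $\gamma_-$ of $CE(\Lambda_-)$, one asymptotic end at the positive end labelled by a generator $\gamma_+$ of $CE(\Lambda_+)$, and an arbitrary number of interior marked points with insertions of the bounding chain $b$. Concretely, one sets
\[
\langle \varphi(\gamma_-),\gamma_+\rangle \;=\; \sum_{d\ge 0}\sum_{\beta}\frac{1}{d!}\bigl(\#\M(\gamma_-;b,\ldots,b;\gamma_+;\beta)_0\bigr)\,q^{\beta}\in\hat{G}(L),
\]
where $\beta$ ranges over relative homology classes in $H_2(\ol{X},\ol{L})$, $q^{\beta}$ is the corresponding group-ring element, and the subscript $0$ denotes the rigid component. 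Finiteness of each coefficient at a given level of the area filtration on $\hat{G}(L)$ follows from the SFT-type compactness and tameness results already established in \cite{BCSW1}.

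To verify $\varphi\delta_-=\delta_+\varphi$, I would analyze the codimension-one boundary of the one-dimensional components of these moduli spaces. By the compactness theorem of \cite{BCSW1}, any such boundary point is of one of four types: (i) a cylindrical level breaks off in $\R\times Z_-$ at the negative end, producing the composition contributing to $\varphi\circ\delta_-$; (ii) a cylindrical level breaks off in $\R\times Z_+$ at the positive end, producing $\delta_+\circ\varphi$; (iii) a treed holomorphic configuration with boundary entirely on $L$ pinches off at one (or several) of the interior marked points, leaving a rigid cobordism configuration of lower insertion count; or (iv) an internal Morse edge of the tree breaks or collapses. Contributions of type (iv) cancel in pairs by the usual Morse-theoretic gluing at an interior critical point. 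Configurations of type (iii), after summing over the number of bubbled insertions at each interior marked point of a rigid cobordism configuration, reconstitute exactly the expression $\sum_{d\ge 0}m_d(b,\ldots,b)$; these vanish by the Maurer--Cartan hypothesis $b\in MC(L)$. The signed count of the remaining boundary points then yields $\varphi\delta_--\delta_+\varphi=0$.

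For independence of the choice of almost complex structure, Morse functions, and Cieliebak--Mohnke perturbations, I would use a standard parametrized moduli space argument. A generic path of auxiliary data, combined with a compatible path $b_t\in MC(L)$ of bounding cochains, gives rise to a one-parameter family of moduli spaces whose codimension-one boundary -- analyzed in the same four-case manner -- assembles into a chain homotopy $K$ satisfying $\varphi_1-\varphi_0=\delta_+K+K\delta_-$, modulo terms absorbed by the gauge transformation relating $b_0$ and $b_1$. Gauge-equivalent bounding cochains therefore produce chain-homotopic cobordism maps.

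The principal obstacle will be the precise identification of the interior-bubbling stratum (iii) with insertions of $m_d(b,\ldots,b)$ at a marked point of a rigid cobordism configuration. This requires a bijection between the broken configurations arising in the SFT compactification and glued configurations with an interior $L$-disk bubble, together with a matching of induced orientations so that the Maurer--Cartan cancellation takes place with the correct signs. One must also rule out the remaining potentially problematic codimension-one strata -- in particular, sphere bubbling and bubbling of disks with only outgoing Reeb-chord punctures -- using the tameness hypothesis and monotonicity, as in the treatment of the differential $\delta$ itself in \cite{BCSW1}.
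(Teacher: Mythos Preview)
Your overall architecture matches the paper's: define $\varphi(L,b)$ by counting rigid treed disks with $b$-insertions, prove the chain-map relation from the boundary of one-dimensional moduli spaces, and obtain independence via a parametrized argument. But three points differ materially from what the paper actually does.

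First, a disk contributing to $\varphi(L,b)$ has one incoming puncture asymptotic to $\gamma_-\in\cI(\Lambda_-)$ but \emph{arbitrarily many} outgoing punctures, so the output is a word $x_1\otimes\cdots\otimes x_{d_+}$ in $CE(\Lambda_+)$, not a single generator $\gamma_+$. Your pairing $\langle\varphi(\gamma_-),\gamma_+\rangle$ captures only the length-one piece; without the full word output the relation $\varphi\delta_-=\delta_+\varphi$ would not even type-check, since $\delta_-$ already lands in words. Second, the insertions of $b$ are at \emph{boundary} leaves of the treed disk, attached by Morse trajectories in $L$ to critical points of $f_L$ (which lie in the interior of the cobordism $L$, not of the domain). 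Your phrase ``interior marked points'' and the later reference to an ``interior $L$-disk bubble'' describe the wrong degeneration: the relevant codimension-one stratum is a broken boundary Morse edge through some $x\in\crit(f_L)$, separating off a sub-tree with no incoming Reeb puncture whose count is the coefficient of $x$ in $m(b)$. This is how the Maurer--Cartan equation enters, and your type (iv) is not a separate phenomenon but rather the same one.

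Third, for independence the paper does not assume a path $b_t\in MC(L)$; it instead \emph{constructs} a map $MC(J_0,f_0)\to MC(J_1,f_1)$ by counting quilted treed disks (with seam recording the transition in auxiliary data), sending $b_0\mapsto b_1=\sum_d q_d(b_0,\ldots,b_0)$, and then builds the homotopy $h$ from quilted disks with one incoming Reeb puncture. Your proposal to choose a compatible $b_t$ a priori would require an argument that such a path exists, which is exactly what the quilted-disk map supplies.
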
 

A special case of the theorem, for trivial cobordisms, may be used
to prove invariance of contact homology under Legendrian isotopy in Theorem \ref{thm:he}.  As a second case of Theorem \ref{chainmap},
 fillings  of a Legendrian with good properties (such as the Harvey-Lawson filling below) define augmentations:   chain maps 
\[ \varphi{(L,b)} : \ CE(\Lambda) \to \hat{G}(L) .\]
In the third part \cite{BCSW3} of this series we study the augmentations associated to these fillings, and apply the resulting augmentation varieties to distinguish 
Legendrian isotopy classes.

\section{Classification results}

In this section,  we carry out a classification of holomorphic disks that allows us to partially compute certain differentials in the Legendrian contact homology, which will be introduced in the next section.  For example, we wish to compute the image of the differential on degree one generators in the case of Legendrian lifts of Lagrangian torus orbits in toric varieties. 

First let us recall the geometric setup from \cite[Section 2]{BCSW1}. Let $(Y,\omega_Y)$ be a closed symplectic manifold such that $[\omega_Y] \in H^2(Y,\Z)$ is an integral cohomology class. There exists a principle $S^1$-bundle $p:Z\to Y$ whose first Chern class is $- [\omega_Y] \in  H^2(Y,\Z)$ together with a connection one-form form $\kappa \in \Omega^1(Z)$ such that:
    \begin{itemize}
        \item $\alpha = - \kappa$ is a contact form on $Z$;
        \item the curvature form of $\kappa$ is $
        - \omega_Y$, that is, $\d\kappa = 
        - p^*\omega_Y$;
        \item and the generating vector field defining the principal $S^1$-action on $Z$ coincides with opposite of the Reeb vector field $R_\alpha$ of $\alpha$.
    \end{itemize}
We call such a contact manifold $(Z,\alpha)$ a \textit{circle-fibered contact manifold}. For a Legendrian submanifold $\Lambda$ in $Z$, we write $\Pi:=p(\Lambda)\subset Y$, which is a (possibly immersed) Lagrangian in $Y$.

\begin{example} \label{cliffleg4}
The Clifford Lagrangian in complex projective space is denoted
\[ \Pi_{\Cliff} := \Set{ [ z_1,\ldots, z_n ] \ | \ \sum_{i=1}^n |z_i|^2 = 1 } \cong T^{n-1}
\subset \CP^{n-1} .\]
We view the standard contact sphere as a circle bundle over the complex projective space. The Clifford Legendrian $\Lambda_{\Cliff}$ is a horizontal lift of the Clifford Lagrangian defined as 
\begin{equation}   
\Lambda_{\Cliff} := \Set{ (z_1,\ldots, z_n) \in \C^n |  \begin{array}{l}
                                                     |z_1|^2 = \ldots
                                                     = |z_n|^2 = \frac{1}{n} \\  z_1 
    \ldots z_n \in (0,\infty)  \end{array} } \cong T^{n-1} \subset S^{2n-1} .
\end{equation}
The corresponding Lagrangian cylinder in the symplectization is 
  \[ \R \times \Lambda_{\Cliff} \cong \Set{ (z_1,z_2, \ldots, z_n) | 
  \begin{array}{l} |z_1| = |z_2| = \ldots =
    |z_n| \\  z_1 z_2 \ldots z_n \in (0,\infty) \end{array} }  \subset \C^n - \{ 0 \} . \]
\end{example}

\begin{example}\label{ex:HLfilling}
An example of a Lagrangian filling of $\Lambda_{\Cliff}$ is given by the Harvey-Lawson type filling.  Let
\[ a_1,\ldots, a_n \in \R \]
be 
real numbers with exactly two  zero.  Define as in Joyce
\cite[(37)]{joyce}
\begin{equation} \label{hl}
L_{(1)} := \Set{ (z_1,\ldots, z_n) \in \C^n | \begin{array}{l} |z_1|^2 - a_1^2
    = |z_2|^2 - a_2^2 = \ldots =
                                       |z_n|^2 -a_n^2 \\
                                       z_1 \ldots z_n \in
                                       [0,\infty) \end{array} } .\end{equation}
The parameters $a_i$ are often omitted in the notation.
\end{example}

A special feature of the contact form $\alpha$ is that its Reeb orbits are multiple covers of the circle fibers. Our convention identifies a circle fiber with $\R/\Z$. In other words, the action of a simple Reeb orbit is one.

\subsection{Examples of punctured holomorphic disks} 

In Section \ref{liftsec}, we describe more generally how holomorphic disks in the base lift to punctured disks in the symplectization.  In this section, we discuss particular motivational examples. 

\begin{example}    The following describes lifts of Maslov-index-two-disks bounding the Clifford Lagrangian.
For $  \Im(z) \ge 0, c >0 ,a > 0 $ and phases
  $e^{i \theta_1}, \ldots, e^{i \theta_n}$ with unit product
\[ e^{i \theta_1} \cdot \ldots \cdot e^{i \theta_k} = 1 . \]
let 
  \begin{multline} \label{filldisks} u : \mathbb{H} \to \C^n, \\
  z \mapsto  ( e^{i \theta_1}  ( cz - ai) (cz +
    ai)^{(2/n)-1}, e^{i \theta_2} (cz+ ai)^{2/n}, \ldots, e^{i \theta_n}  (cz + ai)^{2/n} ) \end{multline}
The composition 
  $\ol{u}(z)$ of $u(z)$ with the projection to $\C P^{n-1}$ is
\[ \ol{u}(z) = \left[ \frac{cz - ai}{cz + ai}, 1,\ldots, 1\right] .\] 
The disk $\ol{u}$ has with one intersection with the anticanonical
divisor, at $cz = ai$, and so has Maslov index $2$. We will see in Section \ref{liftsec} that this  is a complete 
list of punctured disks with one incoming Reeb chord of minimal length.   One obtains examples of treed disks by adding Morse trajectories 
limiting to index one critical points.
\end{example}

\begin{example} \label{hopf2}
We next describe the disks that bound a particular connected filling of a disconnected Legendrian.
Let $\Lambda_{\Cliff} \subset S^{2n-1}$ be the Clifford Legendrian in Example \ref{cliffleg4}.  Define the {\em Hopf Legendrian} 
\begin{eqnarray} 
\label{hopf} \Lambda_{\on{Hopf}} &=& \Lambda_{\Cliff} \cup \exp\left( \frac{\pi i}{n} \right)
 \Lambda_{\Cliff}
  \\
  &=& \Set{ (z_1,\ldots, z_n) \in \C^n |  \begin{array}{l}
                                                     |z_1|^2 = \ldots
                                                     = |z_n|^2 = \frac{1}{n} \\  z_1
    \ldots z_n \in \R \end{array} }    \\
    &\cong& T^{n-1} \sqcup T^{n-1} \subset  S^{2n-1}
    \end{eqnarray}
 the union of $\Lambda_{\Cliff}$ with its image $ \exp\left( \frac{\pi i}{n} \right)
 \Lambda_{\Cliff} $ under multiplication by
 $\exp\left( \frac{\pi i}{n} \right)$; the terminology is based on the 
 analogy with the Hopf link in the three-sphere.  An immersed filling of $\Lambda_{\on{Hopf}}$ is given by the union of the Harvey-Lawson fillings:
\[ L_{(1,1)} := L_{(1)} \cup \exp \left( \frac{\pi i}{n} \right) L_{(1)} \cong 
((S^1)^{n-2} \times \R^2) \sqcup 
((S^1)^{n-2} \times \R^2) 
\subset \C^n.\] 
The
intersection of the two components can be made transverse after perturbation.  On the other hand,
a connected filling is given by the set
\begin{equation} \label{L2}
L_{(2)} := \{ (z_1,\ldots, z_n) |  z_1 \ldots z_n \in \R + i \eps, \  |z_1|^2 = \ldots = |z_n|^2
. \} \end{equation} 

The exact filling described in \eqref{L2} can be described in terms of symplectic
parallel transport as follows.  Let
\[ \pi: \C^n \to \C, \quad (z_1,\ldots, z_n) \mapsto z_1 \ldots z_n \]
be the map given by the product of components.  Define the {\em symplectic
  connection} on the fibration $\pi: \C^n - \{ 0 \} \to \C$ as the
symplectic perpendicular to the vertical subspace
\[ T^v \C^n - \{ 0 \} = \ker(D \pi) \] 
of the differential $D\pi$, with respect to the standard symplectic form.  Consider
the path 
\[ \vartheta: \R \to \C, \quad t \mapsto   t + i\eps \] 
or more generally  any path avoiding the critical
value $0 \in \C$.  Symplectic parallel transport 
$\rho_{t}^{t'}$ of the
Lagrangian torus $T^{n-1}$ in any fiber
\[ T^{n-1} \cong \Set{ |z_1| = \ldots = |z_n| \ | \  \pi(z_1,\ldots, z_n) = t }, \quad  t \gg 
0 \]
along the path $\vartheta$ defines the Lagrangian filling
\[ L_{(2)}   := \bigcup_{t' \in \R } \gamma_{t}^{t'} T^{n-1}  \cong T^{n-1} \times \R \] 
of $\Lambda$. 

The holomorphic disks bounding this filling now have the possibility of limiting to Reeb chords connecting the two sheets of the Legendrian at infinity. 
For each $k = 1,\ldots, n$ define a 
holomorphic map 
\begin{multline} \label{nor1} 
u^k_{01}: \H \to \C^n, \\
z \mapsto  \left((-z-i\eps)^{\frac{1}{n}} , \dots,
\underbrace{\frac{-z+i\eps}{-z-i\eps} (-z-i\eps)^{\frac{1}{n}}}_{k^{th}\text{
    coordinate}} , \dots , (-z-i\eps)^{\frac{1}{n}} \right) .\end{multline}
This map is asymptotic along the unique strip-like end to a Reeb chord $\gamma_{12}$ of action $1/n.$
Second, consider the holomorphic map 
\begin{equation} \label{nor12} u_{10} = ((z+i\eps)^{\frac{1}{n}} , \dots ,
(z+i\eps)^{\frac{1}{n}} ) .\end{equation}
This map is asymptotic to a Reeb chord $\gamma_{21}$ of action $1/n$ connecting the sheets in the reverse order.
\end{example}

\subsection{Lifting disks from the base} 
\label{liftsec}

In this section, we prove a bijection between 
disks bounding the Lagrangian in the symplectization and disks bounding the Lagrangian projection of the Legendrian.  This bijection is a version of a result in Dimitriglou-Rizell \cite[Theorem 2.1]{riz:lift}
adapted to our setting.  Later this lifting result will be used to prove that the augmentation variety is contained in the zero level set of the Landau-Ginzburg potential.   To state the result, recall the following type of almost complex structures on a symplectic cobordism.

\begin{definition}
Let  $X = \R \times Z$ be the trivial cobordism. An almost complex structure $ J : TX \to TX$ is {\em  cylindrical} if there exists an almost complex structure $\ol{J} : TY \to TY $ so that the projection $p_X: X \to Y$ is almost complex and $J$ is the standard almost complex structure on any fiber.  More precisely, let 
\[ \partial_s \in \Vect(\R \times Z), \quad \partial_{\theta} \in \Vect(\R \times Z) \]
denote the translational vector field on $\R$ resp. rotational vector field on $Z$. 
The almost complex structure $J$ is determined
on the vertical part of $TX$ by 
\[ \quad J  \partial_s = \partial_\theta, \quad J \partial_\theta = - \partial_s  \  \text{in}  \ \Vect(\R \times Z) .\] 
On the other hand, the projection to $Y$ is required to be almost complex:
\[   D {p}_X J = \ol{J} D {p}_X \  \text{in}  \ \Map(TX,TY) . \]
Suppose now $X$ is an arbitrary cobordism with concave end $Z_-$ and convex end $Z_+$.
An almost complex structure $J$ on $X$ is called {\em  cylindrical}
if it is the restriction of cylindrical almost complex structures
on $\R \times Z_\pm$ on the cylindrical ends $\pm (0,\infty) \times Z_\pm \to X$.  The space of cylindrical almost complex structures is denoted $\J_{\cyl}(X)$.
This ends the Definition.
\end{definition} 

Let $\M(\Lambda)$ denote the moduli space of holomorphic punctured disks of finite Hofer energy bounding $\R\times\Lambda$ as in \cite{BCSW1}
and $\M(\Pi)$ the moduli space of (unpunctured) holomorphic disks bounding $\Pi$.
Here we mean disks with irreducible domain, not treed disks, although an extension to treed disks will be used later.

\begin{lemma} Let $Z$ be a circle-fibered contact manifold equipped with a cylindrical almost complex structure.  Composition with the projection $p: \R\times Z \to Y$ defines a map 
of moduli spaces
\begin{equation}\label{eq:projonmoduli}
 \mathfrak{P}:\M(\Lambda) \to \M(\Pi), \quad u \mapsto p \circ u.
\end{equation}
\end{lemma}

\begin{proof}  Given any punctured disk $u$
mapping to $\R \times Z$, composition with the projection from $\R \times Z$ to $Y$ defines a punctured disk mapping to $Y$, since the projection is almost complex.  The fact that $u$ is finite Hofer energy implies that 
the projection is finite energy, and so extends over the punctures
by removal of singularities.  
\end{proof}

We consider the problem of lifting a particular map to the base. 

\begin{definition}  Consider a map bounding $\Pi$
\[ u_Y : S \to Y, \quad u_Y ( \partial S) \subset \Pi  .\]
A lift $u$ of $u_Y$  to a map to $\ol{X}$ is equivalent to a 
section
\[ \sigma: S \to u_Y^* ( Z \times_{S^1} \P^1
  \to Y) \]
of the pull-back  $u_Y^* ( Z \times_{S^1} \P^1
\to Y) $ of the $\P^1$-bundle $Z \times_{S^1} \P^1
\to Y$ associated to $Z$ that we call the {\em section  associated
to the lift}. 
\end{definition}

Since the domains of our maps are disks, these sections are given by their zero-and-pole-structure as we now explain.

\begin{definition} The {\em angle map} assigns to any 
punctured disk $u: S \to X$ the collection of angles of Reeb chords at the punctures
\begin{equation} \label{utotheta} 
\M_{\bGamma}(\Lambda) \to \R_{> 0 }^{\# \Edge_{\rightarrow,\white}(\bGamma)}, \quad
 u \mapsto (\theta_e(u))_{e \in \mE(S)}.\end{equation} 
\end{definition} 

\noindent Introduce the following notation for the set of  possible angle changes. Let 
\[ \cA(u_Y )= \{ (\theta_e(u) )_{e \in \mE(S)} | p \circ u = u_Y \}  \subset  \R_{> 0 }^{\# \Edge_{\rightarrow,\white}(\bGamma)} \] 
be the set of tuples such which satisfy the following condition in \cite[Lemma \ref{I-anglechange}]{BCSW1}, which we recall now. The set of punctures $\mE(S)$ is divided into two disjoint subsets, the outgoing punctures $\mE_-(S)$ and the incoming punctures $\mE_+(S)$. They correspond to punctures asymptotic to the concave end and convex end respectively. The angle change condition is
\begin{equation}\label{eq:anglechange}
\sum_{e \in \cE_+ (S)} \theta_e - \sum_{e \in \cE_-(S)} \theta_e = \int_{S} u^* \d \alpha =  \int_S u_Y^* \omega_Y .
\end{equation}

\vskip .1in
The angle map is a bijection on the space of lifts, similar to the exact case by an observation of Dimitriglou-Rizell \cite[Theorem 2.1]{riz:lift}.

\begin{theorem} \label{liftthm}  Suppose the projection $\Pi$ of $\Lambda$ is embedded in $Y$ and all Reeb chords from $\Lambda$ to $\Lambda$ have angles that are multiples 
of $1/k$ for some $k \in \Z_{> 0}$. 
The map \eqref{utotheta} defines a bijection between 
equivalence classes of lifts $u:S \to X$ of a map $u_Y:\ol{S} \to  Y$ 
and length collections $\theta_e, e \in \mE(S)$  satisfying the 
angle change condition where two lifts are considered equivalent if they are equal up to a scalar multiplication
preserving $\Lambda$.
\end{theorem}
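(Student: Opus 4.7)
The plan is to establish the bijection by constructing, for each compatible tuple of angles, a unique lift (up to $S^1$-scaling) via the theory of finite Blaschke products on the disk.

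\textbf{Reduction to a section of a $\P^1$-bundle.} Following the discussion preceding the theorem, a lift $u$ of $u_Y$ is equivalent to a holomorphic section $\sigma$ of the pullback $\P^1$-bundle $u_Y^*(Z \times_{S^1} \P^1) \to S$ that avoids the two fixed $S^1$-sections except at the boundary punctures, and whose boundary lies in the unit $\C^\times$-locus corresponding to $\R \times p^{-1}(\Pi)$. Because $S$ is topologically a disk and $\Pi$ is embedded, the pullback bundle is holomorphically trivializable over $S$, and in a suitable trivialization the pullback of $\R \times \Lambda$ to $\partial S$ becomes $\partial S \times S^1$. A lift is then encoded by a meromorphic function $f: S \to \P^1$ satisfying $|f| \equiv 1$ on $\partial S$, with prescribed asymptotic behavior at each puncture.

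\textbf{Divisor data from angles and Blaschke classification.} A Reeb chord asymptotic at a puncture $e$ with angle $\theta_e$ forces $f$ to behave locally like $z^{k \theta_e/(2\pi)}$ on a $k$-fold branched cover; the hypothesis that $\theta_e \in (2\pi/k)\Z_{>0}$ ensures these orders are integers, so $f$ extends to a meromorphic function on the closed disk with integer-order zeros or poles at the punctures. The compatibility statement of \cite[Lemma \ref{I-anglechange}]{BCSW1} then guarantees that the total degree of the zero divisor matches that of the pole divisor, which is precisely the condition that $f$ is a finite Blaschke product. Classical complex analysis shows such $f$ exists for any compatible divisor and is unique up to multiplication by a unimodular constant, which matches exactly the $S^1$-scaling preserving $\Lambda$. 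This constructs the inverse of the angle map on the set $\cA(u_Y)$ and so yields the claimed bijection.

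\textbf{Main obstacle.} The delicate point is the first step: verifying that the trivialization of $u_Y^*(Z \times_{S^1} \P^1)$ can be chosen so that the boundary condition literally becomes $|f| \equiv 1$, and that the asymptotic data at each puncture translates to an integer divisor after passing to the $k$-fold cover. This uses both that $\Pi$ is embedded (so $p^{-1}(\Pi) \to \Pi$ is a genuine principal $S^1$-bundle and there is no self-intersection ambiguity in the boundary condition) and the quantization of Reeb chord angles (so that the monodromy of the circle bundle around $\partial S$ is balanced by the total angle change accounted for at the punctures). Once this translation is in place, the remainder of the argument is the standard torsor structure of Blaschke products over a unimodular constant.
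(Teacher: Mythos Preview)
There is a genuine gap in your reduction step. The boundary condition for the section $\sigma$ is \emph{not} $|f|\equiv 1$. In a fiber $(\R\times Z)_y\cong\C^\times$ over $y\in\Pi$, the Lagrangian $\R\times\Lambda$ meets the fiber in $\R\times(\Lambda\cap Z_y)$, which is a union of $k$ radial rays, not the unit circle. A holomorphic retrivialization of the line bundle over the disk acts by multiplication by a nowhere-vanishing holomorphic function, which sends rays to rays and can never turn a ray-type totally real boundary condition into a circle-type one. So the condition $|f|\equiv 1$ is unattainable, and the finite Blaschke product classification (which concerns functions with unimodular boundary values and interior zeros) does not apply. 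Your own ``main obstacle'' paragraph correctly identifies this as the delicate point, but the obstacle is not merely delicate---it is fatal to the approach as written.

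The paper resolves this by passing to the $(-k)$-th tensor power $u_Y^*(Z^{\otimes -k})$. Under this operation the $k$ rays in each fiber collapse to a single real line, giving an honest orientable rank-one real boundary condition of some even Maslov index $2d\ge 0$. A gauge-transformation lemma then identifies the Cauchy--Riemann problem with the standard pair $(\mathcal O(d),\mathcal O_\R(d))$, whose sections are real polynomials with zeros on $\partial S$ at arbitrarily prescribed locations. Since $S$ is simply connected, such a section lifts back through the $k$-fold cover $Z\to Z^{\otimes k}$ to a section of the original problem; this handles the case with only incoming punctures. General angle data is then reached by multiplying by the explicit rational function $\prod_e (z-z_e)^{\pm(\theta_e'-\theta_e)}$, which is bounded at infinity precisely because of the angle-change identity. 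Injectivity is a separate short argument: the ratio of two lifts with the same angles is a bounded holomorphic function on the disk, hence constant. Your sketch can be repaired along these lines, but the Blaschke-product packaging must be replaced by the tensor-power-and-real-polynomial mechanism.
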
 

\begin{proof}  
We claim that the angle map \eqref{utotheta} is bijective on the space of lifts of a given map, up to multiplication by scalars. In particular, we will show that for any $u_Y \in \M_\bGamma(\Pi)$, the restriction of the angle map (\ref{utotheta}) to $\mathfrak P ^{-1} (u_Y)$ is bijective onto $\cA(u_Y)$. 

\vskip .1in
\noindent {\em Step 1:  The angle map is injective. }   Consider two lifts
$u,u'$ of $u_Y$.    Since $\C^\times$ acts
transitively on the fibers of $p$, there exists a function $f$ so that 
the lifts $u,u'$ are related by multiplication by $f$, using the $\C^\times$
action on $\R \times Z$:
\[  f: S \to \C, \quad u' = f u . \]
Since both $u$ and $u'$ are holomorphic, the function $f$ is holomorphic as well. 
 By exponential convergence of punctured holomorphic disks to Reeb chords (see for example \cite[Theorem 3.1]{chanda}) the maps $u,u'$ are  asymptotic to maps of the form 
 \[ (s,t) \mapsto ( \theta_e s, t^{\theta_e}) (c,z) \ \text{resp. }  \ (s,t) \mapsto ( \theta_e' s, t^{\theta_e'})  \] 
 for some $(c,z) \in Z \times \R$ and constants $\theta_e,\theta_e'$.  So $f$ is asymptotic to  
\[ f(s,t) \sim ((\theta_e  - \theta_e') s, t^{\theta_e - \theta_e'}) \] 
on each end $e \in \mE(S)$. If the angles $\theta_e,\theta_e'$ are equal then $f$ is 
asymptotically constant along the ends, so bounded on $S$.  
Any such bounded holomorphic function is constant, which corresponds to the scalar multiplication in the fiber direction. 

\vskip .1in
\noindent {\em Step 2:  The angle map is surjective in the case of no outgoing punctures and the Lagrangian in the base is embedded.} The bundle $u_Y^* (\R \times Z \to Y)$ has totally real boundary condition that is a union of $k$ real subspaces $u_Y^* (\R \times  \Lambda)$.
Consider the $-k$-fold tensor product 
\[ u_Y^* (\R \times Z^{\otimes -k} \to Y) .\]
A boundary condition is given by the real sub-bundle 
\[ (\partial u_Y^*) (\R \times  \Lambda^{\otimes -k}) \subset 
u_Y^* (\R \times Z^{\otimes -k} \to Y) \] 
where
$\Lambda^{\otimes -k}$ denotes the image of $\Lambda$ under $Z \mapsto Z^{\otimes -k}$. (The branches combine under tensor product.)     
The map $u_Y$ extends over the punctures, by removal of singularities, and so the bundle pair above extends over the punctures as well.
By results of Oh \cite{oh:rh}, the bundle pair above is isomorphic to the pair $(\mO(d), \mO_\R(d))$.   Negativity of the bundle $Z \to Y$ implies that $d$ is non-negative.  After identifying the domain punctured
disk with $\C$ (with puncture at infinity) 
the  global sections with the given boundary conditions are real polynomials 
\[  \{ c (z - z_1 ) \ldots (z - z_d) \ | \  c, z_1,\ldots, z_d \in \R \}  \subset \Gamma( \mO(d), \mO_\R(d) )\] 
whose zeroes all occur on the boundary,  and the location of these zeroes may be arbitrarily specified.     

We obtain sections of the original boundary value problem by lifting.  Since $S$ is simply connected
and the map 
\[  u_Y^* (\R \times Z \to Y) \to  u_Y^* (\R \times Z^{\otimes k} \to Y) \]
is a $k$-fold cover,  the section $\ti{s}$ lifts to a section 
\[ s: S \to  u_Y^* (\R \times Z \to Y) \] 
with boundary values in $ \partial u_Y^* (\R \times  \Lambda^{\otimes k})$ as claimed with poles
at $z_1,\ldots, z_d$.
This shows that the map 
\eqref{utotheta} is surjective in the case that $S$ has no outgoing punctures. 

\vskip .1in
\noindent {\em Step 3:  The angle map is surjective in the case of both incoming and outgoing punctures.}  Suppose $\theta_e' \in 2 \pi \Z/k$ is a collection of angles at the same punctures $z_i$ satisfying the condition in \cite[Lemma \ref{I-anglechange}]{BCSW1}.  
Consider the function
\begin{equation} \label{rationaleq} f: S \to \C,  \quad z \mapsto \prod_{e \in \mE(S)}  (z - z_i)^{\pm (\theta_e' - \theta_e)} \end{equation}
(with sign depending on whether the puncture is incoming or outgoing). Because the angle sums are equal, the function
$f$ is bounded at infinity. The section $fs$ has angle changes $\theta_e'$ and real boundary
conditions as desired. 
\end{proof}

To prove the extension property used in the proof, we first note
that real Cauchy-Riemann operators on a given rank one bundle $E$ on the disk $S$ with real boundary condition $F \subset E| \partial S$ are all gauge equivalent.  That is,

\begin{lemma}  \label{lem:rankone} Let $S$ be a disk, 
$E \to S$ a complex line bundle, and $F \subset E |_{\partial S}$
a totally real boundary condition.    Let $D'_{E,F} $ and $D''_{E,F}$ be real Cauchy-Riemann
  operators on $E$ with boundary values in $F$.  Then there exists a
  real gauge transformation $g: S \to \Aut_\R(E)$ equal to the
  identity on the boundary so that
  \begin{equation} \label{realgauge}  g D'_{E,F} g^{-1} = D''_{E,F} . \end{equation}
\end{lemma}

\begin{proof} We remark that 
in the integrable, higher rank case, Cauchy-Riemann operators are classified by their splitting type as in Oh \cite{oh:rh}.  In the
rank one case the argument is straight-forward:
It suffices to show that the orbit through every real
  Cauchy-Riemann operator is open.  By definition $D'_{E,F}, D''_{E,F}$ are of the
  form
  \[ \xi \mapsto  \olp \xi  + A' \xi , \quad \xi \mapsto \olp \xi +
    A'' \xi  \]
  for some real-matrix-valued  $0,1$-forms $A',A''$.  A real gauge transformation $g:E \to E$ 
  acts on the space of such operators by
  \[ A \mapsto g \olp g^{-1} + \Ad(g) A . \]
  The tangent space to the orbit
  is the image of $\Omega^0(S, \End_\R(E))$ under the map
  \[  \xi \mapsto   \olp_{-A} := \olp \xi + [\xi,A]  = \olp \xi - [A,\xi] . \]
  The cokernel of this operator is the kernel of the adjoint
  $\olp_{-A}^*$, and equal to the kernel of the generalized Laplacian
  $\olp_{-A} \olp_{-A}^*$.   On the other hand, the domain of this
  Laplacian consists of functions that vanish on the boundary. 
  The kernel of $\olp_{-A}^*$ is trivial by the unique continuation
  principle as in \cite[Section 2.3]{ms:jh}.
\end{proof}

Rigidity and regularity of a map to the base is equivalent to rigidity  and regularity of any lift to the symplectization:

\begin{proposition} \label{prop:liftprop}  Let $C$ be a treed disk, 
$Z$ a circle-fibered contact manifold, $J \in \J(\R \times Z)$ a (possibly domain-dependent) cylindrical almost complex structure,  and $\Lambda \subset Z$
a compact Legendrian.
\begin{enumerate}
    \item A holomorphic treed disk $u: C \to \R \times Z$ bounding $\R \times \Lambda$ is regular if and only if $p \circ u: C \to Y$ is regular.
\item A holomorphic treed disk  $u: C \to \R \times Z$ bounding $\R \times \Lambda$ is rigid if and only if 
$p \circ u: C \to Y$ is rigid. 
\end{enumerate}
\end{proposition} 

We first introduce a long exact sequence that will be used in the proof. 
Let $\ti{D}_u, \ti{D}_{u_Y}$
be the linearized Cauchy-Riemann operators of $u$ and $u_Y$ from \cite[Equation \eqref{I-linop}]{BCSW1}. The tangent space of $T{X}$ splits into the kernel of $Dp$, denoted $T^{\bv} {X}$ and a bundle isomorphic to $p^* TY$.

\begin{lemma}  \label{lem:les}
Let $u: C \to X : \R \times Z$ be a treed disk and $u_Y = p \circ u$ its projection to the base.  The short exact sequence of bundles
\[ 0 \to T^{\bv} {X} \to T {X} \to p^* TY \to 0 \]
induces a long exact sequence of kernels and cokernels:
\[ 0 \to \ker(D_u^{\bv}) \to \ker(\ti{D}_u) \to
\ker( \ti{D}_{u_Y} ) \to \coker( {D}_u^{\bv})  \to \ldots  \]
\end{lemma} 

\begin{proof} The short exact sequence induces a short exact sequence of 
two-term complexes.  The first term of this complex is the short exact sequence of one forms with Lagrangian boundary values
\begin{multline}
    0 \to \Omega^0(C, u^* T^{\bv} X, (\partial u)^* T^{\bv} L ) \to T_C \M_\Gamma \oplus \Omega^0(C, u^* T {X},(\partial u)^* T L) \\
\to T_C \M_\Gamma \oplus \Omega^0(C, u_Y^* Y, (\partial u_Y^*) T\Pi ) \to 0 .
\end{multline} 
Here $\Omega^0(C,\ldots)$ includes both the surface $S$ and tree $T$ components,  with matching condition at the intersection $S \cap T$.  The second term in the short exact sequence is the corresponding exact sequence of one-forms.  The differentials of the complex are the vertical linearized operator $D_u^{\bv}$, the parametrized linear operator $ \ti{D}_u$ and the parametrized linear operator for the projection $\ti{D}_{u_Y}$ respectively.  The domains of these operators on spaces with  finite Banach norms of \cite[Section \ref{I-fredtheory}]{BCSW1} which we omit to simplify notation.  The claim follows from standard homological algebra. 
\end{proof}

\begin{proof}[Proof of Proposition \ref{prop:liftprop}]
For the first claim, by Lemma \ref{lem:les}, it suffices to show that the horizontal and vertical parts of the cokernel vanish.  The kernel of the vertical part $\ker(D_u^{\bv})$  has a section given by 
 the infinitesimal action of $\R^\times$:
 \[ \xi(z) = \ddt (  t u(z)) |_{t = 0 } \in u^* T^{\bv} Z .\] 
   As a rank one boundary value problem, either the kernel or the cokernel vanishes as explained in Oh \cite{oh:rh}.   Since the kernel is non-vanishing, $\coker(\ti{D}_u^{\bv})$ vanishes.  The argument for the second 
 claim is similar.  
 
 For the second claim about rigidity, we show that the kernel of the linearized operator 
 has the same dimension as the horizontal part up to a factor generated by translation.    
 The pull-back bundle $(u|S)^* T(\R \times Z)^{\bv}$ is isomorphic to the trivial bundle via the identification 
  \[ (u|S)^* T(\R \times Z)^{\bv} \mapsto S \times \C, \quad  \xi \mapsto D\exp_u \xi .\]  
The map $D \exp_u$ is  multiplication by $u$ after identifying any particular fiber with $\C^\times$.  The kernel $\ker(D_{u|S}^{\bv})$ is the space of  bounded holomorphic functions.  The map $D \exp_u$ identifies the kernel  with complex valued functions with the same order at the poles and zeros of the map $u|S$.

 We claim that the vertical part of the kernel is one-dimensional. Since $S$ is a union  of disks, $\ker(D_{u|S}^{\bv})$ consists of constant functions. 
The matching conditions at the edges $T_e$ connecting components $S_{v_-}, S_{v_+}$
impose a codimension one set of constraints,
so that $\ker(D_u^{\bv})$ is one-dimensional.  
Since translations act on the space
 of buildings, a disk $u$ is rigid if and only if $\ker( \ti{D}_{u_Y} )$ vanishes, which is to say that $u_Y$ is rigid. 
\end{proof} 

Because the equation on the segments is invariant, the projection map above extends naturally to a map of treed disks:

\begin{lemma}\label{lem:projmap} Composing with the projection $\R \times Z \to Y$ defines a 
map of treed disks $\M_\bGamma(\Lambda)$  of type $\bGamma$
to treed disks $\M_{p(\bGamma)}(\Pi)$, where $p(\bGamma)$ is the 
map type for treed disks obtained by  the corresponding map on homotopy groups on each component.   Given a rigid map $u \in \M_\bGamma(\Lambda)$
the corresponding map $p \circ u \in \M_{p(\bGamma)}(\Pi)$ is rigid, 
and the converse holds if the vertical part of the index vanishes. The projection map $\M(\Lambda)_0 \to \M(\Pi)_0 $  is orientation preserving.
\end{lemma}

\begin{proof}  The first few statements are immediate from the definitions.  For the statement on orientations, recall from \cite[Section \ref{I-sec:orientations}]{BCSW1} that the the orientations on moduli spaces
$\M(\Lambda),\M(\Pi)$ are induced by deformations of spin structure.  The Cauchy-Riemann operator
for $\M(\Lambda)$ splits into vertical and horizontal parts, and these may be deformed separately.  Namely,
let $u: S \to \R \times Z$ be a rigid holomorphic map bounding 
$\R \times \Lambda$.  The 
kernel of the linearized map ${D}_u$
is naturally isomorphic to the direct sum 
\[ \ker({D}_u) \cong 
\ker({D}_u^{\bv}) \oplus  \ker({D}_u^{\bh}) \cong \R \oplus \ker({D}_{p \circ u}) .\]
Hence
\[ \ker({D}_u)/\R \cong \ker(\ti{D}_{p \circ u})  .\]
To see
that this isomorphism is orientation preserving, note that the orientations on both sides are obtained by capping off the strip-like ends to obtain a surface without strip-like
ends $\ol{S}_v$ and deforming to the operator obtained by gluing together a trivial Cauchy-Riemann operator on a disk $\ol{S}_v'$ with a Cauchy-Riemann operator on a sphere $\ol{S}_v''$.   Then the operator on the vertical part is the standard Cauchy-Riemann operator $ {D}_u^{\bv}$ on a trivial bundle with trivial boundary conditions.  Thus $D_u^{\bv}$ has trivial cokernel
and one-dimensional kernel canonically identified with $\R$.
The capping operation induces the standard orientation on this factor,  and so the orientation is that induced by the horizontal part. 
\end{proof}

We apply the above results to 
the disks that contribute to the disk potential of the Lagrangian projection.    Let $Y$ be a monotone symplectic manifold and $\Pi \subset Y$
a compact, connected, relatively spin, monotone Lagrangian submanifold.   Recall the {\em disk potential}
defined as follows, as in for example Cho-Oh \cite{chooh:fano}.  Choose a generic almost complex structure $J$, a generic point $\pt \in \Pi$ and let $\M(\Pi,\pt)_0$ denote the moduli space
of rigid (necessarily Maslov index two) $J$-holomorphic disks passing through $\pt \in \Pi$.

\begin{definition}
Let $\Pi \subset Y$ be a compact monotone relatively spin Lagrangian.
The disk potential of $\Pi$ is the function $W_\Pi$ on the space of local systems $\Rep(\Pi)$ obtained by counting Maslov-index-two-disks weighted by holonomy:
\[ W_\Pi: \Rep(\Pi) \to \C^\times , \quad \rho \mapsto   \sum_{u \in \M(\Pi, \pt)_0 }  \rho( [\partial u]) .\]
\end{definition}

By Proposition \ref{prop:liftprop}, disks contributing to the potential in the base lift to punctured
disks in the symplectization of a circle bundle contributing to the contact differential. 
Let $Z$ denote a circle bundle with connection $\alpha$ whose curvature is $-\omega$.  Since $c_1(Z) = -[\omega]$, we have 
\[ c_1(Y) = -\tau_Y c_1(Z) \] 
where $\tau_Y$ is the monotonicity constant for $(Y,\omega)$.  Let $\Lambda \subset Z$ denote a horizontal lift of $\Pi$.

\begin{corollary}  Suppose $Y$ is monotone with monotonicity constant $\tau_Y$ and $\Pi \subset Y$ a monotone Lagrangian.  Any Maslov index two disk $(u_Y: \ol{S} \to Y) \in \M(\Pi,\pt)_0$ lifts to a 
rigid once-punctured disk $(u: S \to \R \times Z) \in \M(\Lambda,\aa)_0$ asymptotic to a Reeb chord
with action $1/\tau_Y$ at the puncture. 
\end{corollary}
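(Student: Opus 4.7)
The plan is to assemble this as a short corollary of Theorem \ref{liftthm} and Proposition \ref{liftprop}, with the only real content being the computation of the angle at the puncture from the monotonicity hypothesis.

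First, I would apply Theorem \ref{liftthm} to $u_Y$ with the combinatorial type consisting of one disk component carrying a single puncture. To see this type is admissible, observe that the lift of $u_Y$ is a section of $u_Y^*(\R \times Z) \to S$ with boundary in the totally real subbundle $(\partial u_Y)^*(\R \times \Lambda)$; because the bundle $Z \to Y$ is non-trivial over the disk class of $u_Y$ (its Chern pairing is $-\omega(u_Y) \ne 0$), no global section can exist and a puncture is forced. Theorem \ref{liftthm} then produces a holomorphic lift $u: S \to \R \times Z$ with $p \circ u = u_Y$, unique up to the $\R^\times$-action on the fiber, once the angle $\theta$ at the puncture is specified compatibly with [BCSW1, Lemma \ref{I-anglechange}].

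Second, I would pin down $\theta$ using Stokes' theorem together with the structure equation $d\alpha = -p^*\omega_Y$ for the contact form. Pulling back by $u$ and integrating, the contribution of the strip-like end is $\theta$, so
\[ \theta \;=\; 2\pi \int_{\ol{S}} u_Y^*\omega_Y . \]
Since $\Pi \subset Y$ is monotone with constant $\tau$ and $u_Y$ has Maslov index two, the standard monotonicity identity $\mu(u_Y) = 2\tau \int u_Y^*\omega_Y$ (consistent with $c_1(Y) = \tau c_1(Z) = -\tau[\omega]$) gives $\int u_Y^*\omega_Y = 1/\tau$, hence $\theta = 2\pi/\tau$ as claimed.

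Third, rigidity transfers: Proposition \ref{liftprop}(b) says $u$ is rigid in $\M(\Lambda,\aa)_0$ if and only if $u_Y$ is rigid in $\M(\Pi,\pt)_0$, the point constraint $\pt$ pulling back to a point constraint $\aa$ along the horizontal lift. Since $u_Y$ was assumed rigid, so is $u$. The main delicate step is the angle calculation in the second paragraph, specifically matching the sign and normalization conventions of monotonicity and of the angle-change formula in [BCSW1, Lemma \ref{I-anglechange}]; once these are aligned, the result follows formally from the two earlier theorems.
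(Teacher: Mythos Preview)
Your proposal is correct and follows essentially the same approach as the paper. The paper's proof is more terse: it simply observes that monotonicity gives symplectic area $1/\tau$ and then invokes the angle-change formula from \cite{BCSW1} directly, leaving the existence of the lift (Theorem~\ref{liftthm}) and the transfer of rigidity (Proposition~\ref{liftprop}) implicit, whereas you spell these steps out and re-derive the one-puncture case of the angle-change formula via Stokes rather than citing it.
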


\begin{proof}  Let $u_Y:  \ol{S} \to Y$ be a disk of Maslov index two.   By the monotonicity assumption, the symplectic
area of $u_Y$ is $1/\tau_Y$.  The result then follows from the angle
change formula \eqref{eq:anglechange}.
\end{proof}

\begin{example} \label{cliffleg5} In the case 
of the Clifford torus, continuing Example \ref{cliffleg4}, the Maslov index two disks
have simple formulas given in \eqref{filldisks}.
\end{example}

Finally we discuss lifts of constant disks.   The following Proposition shows in particular that lifts of such disks are rigid only if they have three punctures:

  \begin{proposition} \label{prop:vertex} Let $Z \to Y$ be a circle-fibered contact  manifold and $\Lambda \subset Z$ a
  compact Legendrian submanifold.   Let $u: C \to \R \times Z$ bounding $\R \times \Lambda$ be a rigid stable holomorphic disk with only boundary punctures whose image in $Y$ is constant.   Then $u$ is a three-punctured disk with incoming Reeb chord $\gamma_{02}$ and incoming  Reeb chords $\gamma_{01}, \gamma_{12}$
  so that $\gamma_{02}$ is the concatenation of $\gamma_{01}$
  and $\gamma_{12}$, as in Figure \ref{fig:zeroarea}.  
  \end{proposition} 
  
  \begin{proof} Such maps are discussed in more detail in the context of symplectic field theory in Ekholm \cite[Section 6.1.E]{ek:ft}.   We first show that any rigid such map $u$ has exactly three punctures by an index computation.    The bundles 
  \[ u^* T (\R \times Z) \to S  , \quad 
  (\partial u)^* T (\R \times \Lambda) \to \partial S  \] 
  are trivial in this case, and the reader may check that the kernel of $D_u$ is one-dimensional and the   cokernel vanishes; in particular these maps are regular.     Assuming only boundary punctures, The index of the trivial Cauchy-Riemann operator  $ \rank(F) + d(\white)-3$ = $\d(\white)  - 2$, and after 
  quotienting the action of $\R$ by translation, the dimension is $d(\white) - 3$.  
  By rigidity, the number of punctures is $d(\white) = 3$.
    \end{proof}

\begin{remark} 
  The existence of such  maps $u$ in the fibers of $\R \times Z \to Y$
  follows, for example, from the Cartheodory-Torhorst extension of the
  Riemann mapping theorem, as explained in Rempe-Gillen \cite{rempe}.
  Torhorst \cite{torhorst} shows that the biholomorphism on the interior of a domain 
(in this case the image $u(S)$ as a subset of a fiber)  has a continuous
  extension if the boundary (in this case $u(\partial S)$) is locally connected.  The map $u$ is a diffeomorphism on the interior, and from this it follows that $u$ is unique up to automorphism of the domain.  
\end{remark}
 
\begin{figure}[ht]
    \centering
    \scalebox{.7}{
\begingroup%
  \makeatletter%
  \providecommand\color[2][]{%
    \errmessage{(Inkscape) Color is used for the text in Inkscape, but the package 'color.sty' is not loaded}%
    \renewcommand\color[2][]{}%
  }%
  \providecommand\transparent[1]{%
    \errmessage{(Inkscape) Transparency is used (non-zero) for the text in Inkscape, but the package 'transparent.sty' is not loaded}%
    \renewcommand\transparent[1]{}%
  }%
  \providecommand\rotatebox[2]{#2}%
  \newcommand*\fsize{\dimexpr\f@size pt\relax}%
  \newcommand*\lineheight[1]{\fontsize{\fsize}{#1\fsize}\selectfont}%
  \ifx\svgwidth\undefined%
    \setlength{\unitlength}{486.27841259bp}%
    \ifx\svgscale\undefined%
      \relax%
    \else%
      \setlength{\unitlength}{\unitlength * \real{\svgscale}}%
    \fi%
  \else%
    \setlength{\unitlength}{\svgwidth}%
  \fi%
  \global\let\svgwidth\undefined%
  \global\let\svgscale\undefined%
  \makeatother%
  \begin{picture}(1,0.13276871)%
    \lineheight{1}%
    \setlength\tabcolsep{0pt}%
    \put(0,0){\includegraphics[width=\unitlength,page=1]{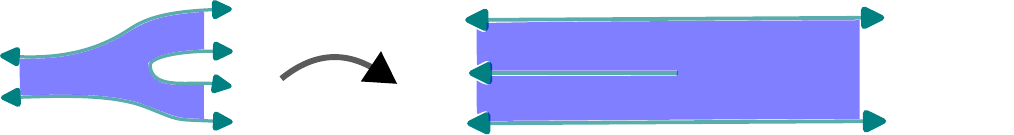}}%
    \put(0.85818361,0.06235299){\color[rgb]{0,0,0}\makebox(0,0)[lt]{\lineheight{1.25}\smash{\begin{tabular}[t]{l}$\gamma_{00}$\end{tabular}}}}%
    \put(0.37796421,0.03082354){\color[rgb]{0,0,0}\makebox(0,0)[lt]{\lineheight{1.25}\smash{\begin{tabular}[t]{l}$\gamma_{01}$\end{tabular}}}}%
    \put(0.37230505,0.08903191){\color[rgb]{0,0,0}\makebox(0,0)[lt]{\lineheight{1.25}\smash{\begin{tabular}[t]{l}$\gamma_{10}$\end{tabular}}}}%
  \end{picture}%
\endgroup%
}
    \caption{A zero-area map with three punctures}
    \label{fig:zeroarea}
\end{figure}

\subsection{Disks bounding toric Lagrangians}
\label{toricleg}

We describe some classification results in the case that the Legendrian is a horizontal lift of a Lagrangian torus orbit in a toric variety.   By the equivalence with geometric invariant theory quotients, one may
obtain a classification of disks in toric varieties as in Cho-Oh \cite{chooh:fano}.   Recall from Delzant \cite{delzant} that any projective toric variety  is equivariantly symplectomorphic to a symplectic quotient of a vector space by a Hamiltonian torus action.   The polytopes occurring  are called {\em Delzant polytopes}, and have the property that the primitive lattice vectors to the facets at any vertex form a lattice basis.   Let $Y_P$ be a symplectic toric manifold correponding to a Delzant polytope $P$ as in  \cite{delzant}.   This description was used by Cho-Oh \cite{chooh:fano} to obtain a description of the holomorphic disks bounding toric moment fibers.  As explained in Kirwan's thesis \cite{ki:coh}, the toric variety may be  obtained alternatively as the K\"ahler quotient of the semistable locus by the complexified group action. Denote the semistable locus, given as the union of orbits of the complex orbit through the zero level set, by
\[ \C^{k,\ss} = H_\C \Psi^{-1}(0)  \subset \C^k \]
The \textit{K\"ahler quotient} is the quotient of the stable locus by the complexified
group action 
\[ Y_P = \C^{k,\ss} / H_\C \] 
and  is diffeomorphic to the symplectic quotient as explained in \cite{ki:coh}.
Here we use the assumption that $H_\C$ acts with only finite stabilizers on the semistable locus.  Let $\Pi$ be a Lagrangian orbit of the residual group $T = U(1)^k/ H$ acting on $Y_P$.

\begin{lemma} \label{lem:blaschkelem} (Cho-Oh \cite{chooh:fano} )
Let $Y_P = \C^k \qu H $ as above.   Any holomorphic disk $u: S \to Y_P $ bounding a Lagrangian torus fiber $\Pi$ is given by a tuple of Blaschke products
\begin{equation} \label{blaschke2} 
  u:  S \to Y_P, \quad z \mapsto  \left( c_j \prod_{i=1}^{d(j)} \frac{z - a_i}{ 1 - \ol{a_i} z}
  \right)_{j=1}^k  .\end{equation}
\end{lemma} 

The proof uses the fact disks in a git quotient lift to the original space:  In this case the pull-back of the  bundle $\C^{k,\ss} \to Y_P$ to any disk is trivial.   Any disk to $Y_P$ bounding $\Pi$ lifts to a disk in $\C^{k,\ss}$ bounding the product of circles in the factors.  Each such factor is necessarily a Blaschke product of the form in \eqref{blaschke2}.

\begin{example} For example, complex projective space $\CP^{n-1}$ is the symplectic quotient of $\C^n$.
The Maslov index two disks in $\CP^{n-1}$ have lifts to $\C^n$ of the form 
\[ u: S \to \C^n, \quad z \mapsto [e^{i \theta_1}, \ldots, e^{i \theta_{k-1}}, e^{i \theta_k} z, 
e^{i\theta_{k+1}}, \ldots, e^{i \theta_n } ] \]
for some constants $\theta_1,\ldots,\theta_n \in \R.$
\end{example} 

\subsection{Disks in the Harvey-Lawson filling}
\label{sec:diskshl}

In this section, we classify some of the disk bounding the Harvey-Lawson filling in Example \ref{ex:HLfilling}.

\begin{lemma}
Any collection of Blaschke products $u_1,\ldots, u_{n-2}$ in the first $n-2$ coordinates as in \eqref{blaschke2} defines a disk  $u= (u_1,\ldots, u_{n-2}, 0,0): S \to \C^n$ bounding the Harvey-Lawson Lagrangian $L_{(1)}$. 
  \end{lemma}
  
  \begin{proof} The statement of the Lemma follows since the intersection  of the Harvey-Lawson Lagrangian with $\C^{n-2} \times \{(0,0)\}$ is a standard Lagrangian torus $(S^1)^{n-2}$ in $\C^{n-2}$.
\end{proof}

\begin{example} 
  In particular, for $n = 3$ the {\em basic disk} bounding the Harvey-Lawson Lagrangian 
  $L_{(1)} \subset \C^3$ is defined by 
\begin{equation} \label{basic}
u : \{ |z| \leq 1 \} \to \C^3 , \quad z \mapsto  ( z, 0, 0 ). \end{equation} 
\end{example}

Generalizations of these basic disks to other fillings are considered in 
Song Yu \cite{songyu}.

\begin{lemma}  \label{lem:allmult}  For the standard complex structure on $\C^n$
all unpunctured holomorphic disks in $\C^n$ bounding 
the Harvey-Lawson filling $L_{(1)}$ have image contained in $\C^{n-2}$.  In particular, 
for $n=3$ any such disk is a multiple cover of (a scalar multiple of) the basic
disk of \eqref{basic}.
\end{lemma} 

\begin{proof} We examine the composition of any such disk with the  map taking the product of components, which  must be a holomorphic disk in the complex line bounding the reals.   Let $S$ be an (unpunctured) disk and $u: S \to \C^n$ a holomorphic map bounding $L$.  The composition of $u$ with the product map
\[ \pi : \C^n \to \C, \quad (z_1,\ldots, z_n) \mapsto z_1 \ldots z_n \] 
must have boundary $\pi \circ u |_ {\partial S}$ contained in $[0,\infty)$.
  Since $\pi \circ u$ is bounded, $\pi \circ u$ must be
  a constant map equal to some $c \in \R$.  If this constant $c$ is
  non-zero then the boundary $\pi \circ u | \partial S$ must lie on an
  $(n-1)$-torus
\[ \{  |z_1|^2 - a_1^2 = |z_2|^2 - a_2^2 = \ldots = |z_n|^2 -a_n^2 = c \}  \subset L_{(1)} .\]
The winding number
\[ [ \pi \circ u | \partial S \cong S^1  ] \in \pi_1(S^1) \cong \Z \] 
of $\pi \circ u$ around $0$ is the sum of the
winding numbers
\[ [ \partial u_j ] \in \pi_1(S^1) \cong \Z \] 
of the components $u_1,\ldots, u_n$ around $0$ and equal to $0$.  Since $u_j$ is holomorphic, each winding number
$ [ \partial u_j ] $ is non-negative, and at least one is non-zero.  This is  a contradiction.
\end{proof}

\section{Circle-fibered Legendrian contact homology}

In this section, we define the Chekanov-Eliashberg algebra and the Legendrian contact homology, which will be generated by words in certain Reeb chords as well as critical points of a Morse function.

\begin{definition}  \label{def:tame2}
A circle-fibered contact manifold $(Z,\alpha)$ with base $(Y, \omega_{Y})$ with $\d\alpha = p^*\omega_{Y}$
is {\em  tame} if for some constant $\tau_Z \ge 1$ and integral symplectic form
\[ \omega_{Y,0} \in \Omega^2(Y,\R), \quad [\omega_{Y,0}] \in H^2(Y,\Z)  \] 
we have
\[   \omega_Y = \tau_Z \omega_{Y,0} \]
and the base is  monotone in the sense that there exists a {\em  monotonicity constant} $\tau_Y \ge 3$ so that 
\[ c_1(Y) = \tau_Y [\omega_{Y,0}] \in H^2(Y,\R) .\] 
Note that since $\omega_Y$ is proportional to $\omega_{Y,0}$, the Chern class $c_1(Y)$ of $Y$ does not depend on the choice of the symplectic form $\omega_Y$. For any compact spin embedded Legendrian $\Lambda \subset Z$ with embedded
image $\Pi \subset Y$ we call $(Z,\Lambda)$ a {\em  tame pair}.   
\end{definition}

The definition guarantees, in particular, that each rigid disk in $\R\times Z$ bounding $\R\times\Lambda$ has at least one incoming puncture, and no punctured spheres are rigid. See Subsection 3.4 in \cite{BCSW1} for a detailed discussion. 

\subsection{Coefficient rings}

Our coefficient ring is the group ring in relative homology over the rationals completed with the filtration 
defined by area.  Recall that our contact manifold fibers over a monotone symplectic manifold $Y$, and our Legendrian $\Lambda$ is a finite cover over a (not necessarily monotone) Lagrangian in the base. 

\begin{definition}
\vskip .1in \noindent 
The {\em group ring  on second homology} is the space of compactly supported
functions 
\[ \C[H_2(Y,\Pi)]:= \{  g: H_2(Y,\Pi) \to \C,  \on{supp}(g) \ \text{compact}  \}.\]

\vskip .1in \noindent The {\em area function} is 
\[ 
A: H_2(Y,\Pi) \to \R , \quad [u_Y] \mapsto \lan [u_Y], [ \omega_Y] \ran
.\]

\vskip .1in \noindent 
The {\em area filtration } is 
\[  \C[H_2(Y,\Pi)] = \bigcup_{A \in \R} \C[H_2(Y,\Pi)]_A  \]
where 
\[ \C[H_2(Y,\Pi)]_A = \{ g \in
\C[H_2(Y,\Pi)] ,  A(\beta) < A \implies g(\beta) = 0 \} \] 
is the subgroup of functions supported on classes with area at least $A$. 

\vskip .1in \noindent The {\em group ring} for $\Lambda$ is 
\begin{eqnarray} \label{glam} 
G(\Lambda) = \Set{ g: H_2(Y,\Pi) \to \C | \# \on{supp}(g) < \infty }.
\end{eqnarray}

\vskip .1in \noindent The {\em completed group ring} is 
\begin{eqnarray} \label{cglam} 
\mhat{G}(\Lambda) &=& \lim_{\infty \leftarrow A} \C[H_2(Y,\Pi)]/\C[H_2(Y,\Pi)]_A   \\
\nonumber
&=&  \Set{ g: H_2(Y,\Pi) \to \C | 
\forall A, \# \on{supp}(g)_A < \infty  } \end{eqnarray}
where 
\[ \on{supp}(g)_A := \supp(g) \cap \{c |  A(c) < A  \}. \]
\end{definition}

\begin{remark}  If $(Y,\Pi)$ is monotone then the completion with respect to the area filtration will not be necessary for the definition of the differential and the uncompleted group ring will suffice. However, even in the simplest examples our augmentations will require a completed coefficient ring.  In the monotone case we often choose to work with \label{goodcases1} the group ring $\C[H_1(\Lambda)]$ over the first homology $H_1(\Lambda)$, to keep with standard conventions in the literature.
\end{remark}

The group ring has the natural convolution product.  For any homology class $\mu$ we follow standard abuse of notation in the field and denote by
\[ [\mu] \in \mhat{G}(\Lambda) \]
the delta function at $\mu$, so that
\[ [{\mu_1}] [  {\mu_2}] = [{\mu_1 + \mu_2}] .\]
The product extends in the obvious way to the completion $\mhat{G}(\Lambda).$

\subsection{The chain group}

Chains are generated by words in the generators. We first recall the definition of a Morse datum from \cite{BCSW1}.

\begin{definition} \label{morsedatum2} 
A {\em Morse datum} for $(Z,\Lambda)$ consists of a pair  of vector fields on the space of Reeb chords and on the cylinder on the Legendrian 
\[ \zeta_\white \in \Vect({\cR}(\Lambda)),  \quad  \zeta_\black \in \Vect(\R \times \Lambda)^{\R} \] 
arising as follows:
\begin{enumerate} 
\item  There exists a  Morse function on the space of Reeb chords
\[  f_\white:  {\cR}(\Lambda) \to \R \] 
so that $\zeta_\white$ is the gradient vector field:
\begin{equation} \label{zdiam} 
\zeta_\white := \grad(f_\white) \in \Vect({\cR}(\Lambda)) .\end{equation}
\item There exists a Morse function 
\[ f_\black :   \Lambda \to \R ;\] 
with gradient vector field 
\[ \grad(f_\black) \in \Vect(\Lambda) \] 
so that $\zeta_\black$  is a translation-invariant lift of $\grad(f_\black)$\label{zwhite}.
\end{enumerate}
\end{definition} 

\begin{remark}
    Each component of the space of Reeb chords is isomorphic
to the Legendrian, and we could take the vector fields to be equal under the identification
of the various components; however, this assumption is not necessary.
\end{remark}

\begin{definition}  A vector field $\zeta_\black \in \Vect(\R \times \Lambda)$ is {\em positive} if there exists a function
\[ a: \Lambda \to \R_{> 0} \]
so that 
\begin{equation} \label{zetablack} \zeta_\black = a(\lambda) \partial_s + {p}^* \grad(f_\black) 
\end{equation}
where 
\[ {p}^* : \Vect(\Lambda) \to \Vect(\R \times \Lambda)^\R  \] 
is the obvious identification of
translationally-invariant vector fields trivial in the $\R$-direction
with 
vector fields on $\Lambda$.
\end{definition}

The limit of any Morse trajectory along any infinite length trajectory is a zero of the gradient vector field.   We introduce labels for the possible limits of the trajectories above as follows. 
Denote by 
\[ \ul{\R} \cong T\R \times \Lambda \] 
the translational factor in
$T(\R \times \Lambda) = T\R \oplus T \Lambda$.  The  zeroes of the vector field
$p_*(\zeta_\black)$ correspond to tangencies of $\zeta_\black$ with the translational factor:
\[  {p}_*(\zeta_\black)^{-1}(0) = \zeta_\black^{-1}(\ul{\R}) \subset
  \R \times \Lambda . \] 

\begin{definition} The union of the zeroes of the vectors fields is denoted 
\begin{equation} \label{gens2} \cI(\Lambda) := 
\cI_\white(\Lambda) \cup \cI_\black(\Lambda), 
\quad  \cI_\white(\Lambda) := \zeta_\white^{-1}(0), \quad \cI_\black(\Lambda) := \zeta_\black^{-1}(\ul{\R}).
\end{equation}
Let $\cW(\Lambda)$ denote the space of ordered words in the generators
$\cI(\Lambda)$ defined as above:
\begin{equation} \label{words} 
\cW(\Lambda) = \bigcup_{d \ge 0} \cI(\Lambda)^d . \end{equation}
For any word $w  = \gamma_1 \ldots \gamma_k \in \cW(\Lambda)$ denote by 
\[ \ell(w) = k \in \Z_{\ge 0} \] 
the length of $w$ and by $\ell_\black(w)$ the number of {\em classical generators}
 $\gamma_i \in \cI_\black(\Lambda)$. The space of {\em contact chains} is 
the completion 
\begin{equation} \label{CE} CE(\Lambda) = \Set{ \sum_{i=1}^\infty c_i
    \Sigma_i |  \ \Sigma_i \in \cW(\Lambda), c_i \in \mhat{G}(\Lambda), \quad \lim_{i \to \infty}( \ell_{\black}(\Sigma_i)) = \infty }
\end{equation} 
of $\mhat{G}(\Lambda)$-valued functions on $\cW(\Lambda)$ with
respect to the filtration given by the classical word length $\ell_\black$.  We also call $CE(\Lambda)$ the Chekanov-Eliashberg algebra of the Legendrian $\Lambda$, with multiplication given by concatenation of words; the
empty word is the unit 1. Denote by
\[ CE_{\ell } (\Lambda) =  \bigoplus_{ \ell(w) = \ell}
\mhat{G}(\Lambda) w \subset CE(\Lambda)   \] 
the subspace generated by words $w$ of length $\ell$.    This ends the Definition.
\end{definition} 

The definition of Chekanov-Eliashberg chains is similar to that used for immersed Lagrangian Floer theory in Akaho-Joyce \cite{akaho} and for Legendrian contact
homology using contact instantons in Oh \cite{oh:sft}.   More generally, for any 
$G(\Lambda)$-module $G$ we denote by 
$CE(\Lambda,G) = CE(\Lambda) \otimes_{G(\Lambda)} G$ the chains with coefficients in $G$.

For the construction of Legendrian contact homology we also need a $\Z_2$-grading. 
Similar to the case of orbifold quantum cohomology, in the monotone case Legendrian contact homology admits 
  a natural grading by the reals, given by a shifted sum of angles of the Reeb chords.  The $\Z_2$-grading will be used to construct orientations on the  moduli spaces, while the grading used on the contact homology groups will be the one induce by the $\R$-grading.

\begin{definition} \label{gradings} {\rm (Gradings)}  Let $\Lambda \subset Z$
be a Legendrian with projection $\Pi \subset Y$.
\begin{enumerate}     
\item {\rm ($\Z_2$-grading)}    If $\Pi$ is embedded then any Reeb orbit
$\gamma$ over a critical point $x \in \cI_\white(\Lambda)$ define  the {\em $\Z_2$-grading}
\[ \deg_{\Z_2}: \cI_\white (\Lambda) \to \Z_2, \quad \gamma \mapsto  \ind_x(f_\white) + 1 \ \text{mod} \  2 \Z  .\] 
\item {\rm ($\R$-grading) } Suppose that $\Pi \subset Y$ is monotone with monotonicity constant $\tau \in \R$.  Define the {\em real grading}
\[ \cI_\white (\Lambda) \to \R, \quad \gamma \mapsto \deg_\R(\gamma) = \on{ind}
(f_\white)(\gamma) + \tau \theta - 1 \] 
where $\theta$ is the action of the Reeb chord $\gamma$.  Define
\[ \cI_{\black}(\Lambda) \to \R, \quad \gamma \mapsto \deg_\R(\gamma) :=
\on{ind} (f_\black)(\gamma)- 1  . \]
\end{enumerate}

For words define  the degree map as the sum of the degrees
of the factors:
\[ \cW(\Lambda) \to \R, \quad \gamma_1 \otimes \ldots \otimes \gamma_k \mapsto
  \deg_\R(\gamma_1 \otimes \ldots \otimes  \gamma_k) := \sum_{i=1}^k \deg_\R(\gamma_i) .\]
\end{definition}

\begin{lemma} If $\Lambda$ is connected and $Y$ is simply-connected, then the actions of Reeb chords are multiples of $2/\tau$ and the real 
grading defines an $\Z$-grading.
\end{lemma}

\begin{proof}
    Let $\gamma$ be a Reeb chord.  Since $\Lambda$ is connected, the start $\gamma(0)$ and end $\gamma(1)$
    of the path are joined by a path $\phi:[0,1] \to \Lambda$, whose projection $p \circ \phi$ is a loop in 
    $\Pi$. Since $Y$ is simply-connected, 
    $p \circ \phi$ bounds a disk $u: S \to Y$.  By Stokes' theorem, the action of $\gamma$
    is the integral of the derivative $\d \alpha$ over the 
    disk $S$, which $1/\tau$ times the Maslov index by monotonicity.    Since $\Pi$ is oriented, the Maslov index is an even integer. 
\end{proof}

\begin{example} Since the real degree of a classical generator is its Morse degree shifted by one we have embeddings of the classical generators of Morse-degree $0$ resp. $1$
\[ CM_0(\Lambda) \subset CE_{-1}(\Lambda), \quad CM_1(\Lambda) \subset CE_0(\Lambda) .\] 
Suppose $\Pi\subset Y$ is connected and monotone with monotonicity constant $\tau$,
and 
\[ \cR(\Lambda)_{2/\tau} \subset \cR(\Lambda) \] 
is the locus of Reeb chords of action $2/\tau.$  We have by definition inclusions
with a degree shift of one
\[ CM_0(\cR(\Lambda)_{2/\tau}) 
\subset CE_1(\Lambda), \quad CM_1(\cR(\Lambda)_{2/\tau}) \subset CE_2(\Lambda) .\]
The remaining components of $\cR(\Lambda)$ contribute generators in higher degrees. 
This ends the Example.
\end{example}

In the case that $\Pi$ is disconnected, there may be generators of negative $\R$-grading, as we discuss
in Section 7 of \cite{BCSW3}.

\begin{example}
    Consider the Clifford Lagrangian $\Pi$ in $\C P^2$, with its Legendrian lift $\Lambda$ in $S^5$. The monotonicity constant for $\Pi$ is $\tau=6$. The minimal action of a Reeb chord is $\theta=2/\tau=1/3$. Therefore, its \textit{shifted} $\R$-grading is 
    \[
    -1+ \on{ind}(f_\white)(\gamma) + \tau \theta= -1+ \on{ind}(f_\white)(\gamma) + 2. 
    \]
    In particular, when it corresponds to a Morse critical point of index zero, its shifted $\R$-grading is $1$.

The shifted grading has the following relation to the Conley-Zehnder index. A simple Reeb orbit in $S^5$ is a circle fiber over a point in $\C P^2$. Its Conley-Zehnder index is $2c_1(\C P^2)(H)=6$ where $H$ is the hyperplane class, computed in \cite[Equation (2.11)]{bkk}.  Therefore, the Conley-Zehnder index of a Reeb chord with action $1/3$ is $6/3=2$. The shifted grading is $2-1=1$, which matches the definition in \cite[Page 182]{ees:lch}, for example. The extra term $\on{ind}(f_\white)(\gamma)$ comes from the Morse-Bott nature.
\end{example}

For later use, we introduce notation for the subspaces generated
by the classical and Reeb generators.  Define 
\[ \cW_\black(\Lambda) = \bigcup_{d \ge 0} \cI_\black(\Lambda)^d ,  \quad
\cW_\white(\Lambda) = \bigcup_{d \ge 0} \cI_\white(\Lambda)^d .\]
and let
\begin{equation} \label{sectors}
CE_\white(\Lambda) \subset CE(\Lambda), \quad CE_\black(\Lambda) \subset CE(\Lambda)  \end{equation}
denote the subspaces generated by words in $\cW_\black(\Lambda)$
resp. $\cW_\white(\Lambda).$  We call $CE_\black(\Lambda)$ the {\em classical sector}.

\subsection{The contact differential} 

As above, 
suppose that $(Z,\Lambda)$ is a tame pair. Assume $\ul{P}$ is a collection of perturbation data that are coherent, stabilizing, and regular as in 
\cite[Section \ref{I-foundsec}]{BCSW1}.  

\begin{definition} The contact differential $\delta
\in \End(CE(\Lambda))$ is the weighted
count of punctured holomorphic surfaces, with all components disks and with one incoming puncture on each component:
\begin{equation} \label{deltadef}
  \delta: CE(\Lambda) \to CE(\Lambda), \
  \ul{\gamma}_- \mapsto \sum_{u \in
    \M(L,\ul{\gamma}_-,\ul{\gamma}_+)_0} (-1)^{\heartsuit} \wt(u) 
    \ul{\gamma}_+ \end{equation}
    where 
\begin{equation} \label{heartsign} \heartsuit = \sum_{i=1}^{d_-} i |\gamma_{-,i}| \end{equation}
$d_-$ is the length of $d_-$ and the weight $\wt(u)$ is defined as the product of factors
    \begin{equation} \label{wu} \wt(u):= c_i d(\white)^{-1} \eps(u)
 [ {p} \circ u ]
 \end{equation}
with $u:C \to X$ being a map from a curve $C$ of some type $\Gamma$, $c_i$ being the coefficient of the perturbation $P_{\Gamma,i}$ in the multivalued perturbation $P_\Gamma$, $d(\white)$ is the number
of interior edges mapping to the Donaldson hypersurface and 
$\eps(u)$ is the orientation sign.  Here the domain type $\Gamma$ ranges over rigid types of disconnected punctured disks with one incoming puncture on each component, and $[u]$ is the homology class of the capped off
boundary defined in \cite[Equation \eqref{I-cappedoff}]{BCSW1}.   This ends the Definition.
\end{definition}

There are many variations using other coefficient rings.
In case that $(Y,\Pi)$ is monotone, one can replace $[{p} \circ u] \in H_2(Y,\Pi)$  in \eqref{wu}] with $[\partial u]$ as defined in \cite[Equation \eqref{I-cappedoff}]{BCSW1}, or work over the complex numbers as coefficient ring.

\begin{proposition}  The map $\delta: CE(\Lambda) \to CE(\Lambda)$ is well-defined.
\end{proposition}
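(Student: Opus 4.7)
The task is to show that for each generator $\ul{\gamma}_-\in \cI(\Lambda)$ the formal sum $\delta(\ul{\gamma}_-)=\sum_u \wt(u)\,\ul{\gamma}_+(u)$ defines an element of $CE(\Lambda)$ as in \eqref{CE}, and then to extend $\hat{G}(\Lambda)$-linearly and continuously. Concretely, this requires two things: (a) for each fixed output word $\ul{\gamma}_+\in\cW(\Lambda)$, the coefficient $\sum_{u\in\M(L,\ul{\gamma}_-,\ul{\gamma}_+)_0}\wt(u)$ defines an element of $\hat{G}(\Lambda)$; and (b) the totality of output words $\ul{\gamma}_+$ occurring with non-zero coefficient at a given area bound has length tending to infinity (so that the infinite sum is admissible in $CE(\Lambda)$).

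For (a), I would fix $\ul{\gamma}_+$ and an area bound $A$, and observe that the subset of rigid disks $u\in\M(L,\ul{\gamma}_-,\ul{\gamma}_+)_0$ with $A(p_Y\circ u)<A$ is compact by the Gromov compactness result established in \cite[Section on compactness]{BCSW1}, noting that tameness of $\Lambda$ rules out bubbling off of rigid punctured spheres or disks with only outgoing ends. Since $\M(L,\ul{\gamma}_-,\ul{\gamma}_+)_0$ is $0$-dimensional (by the rigidity condition underlying the definition \eqref{deltadef}) and the perturbation system is coherent so that transversality holds, any compact slice is a finite set. Thus $\supp(\sum_u\wt(u))\cap\{A<A_0\}$ is finite for every $A_0$, which is precisely the condition defining $\hat{G}(\Lambda)$ in \eqref{cglam}.

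For (b), I would use that each Reeb chord appearing as an outgoing puncture contributes a definite positive amount to the symplectic area of the disk via the angle--area correspondence of \cite[Lemma on anglechange]{BCSW1}, and each classical generator in $\ul{\gamma}_+$ arises from a breaking at an interior node, which in a rigid configuration with transversality also carries a positive area cost coming from the monotonicity of the bounding disk bubble. Since the set of Reeb chords of angle at most any fixed bound is finite, and $\cI_\black(\Lambda)$ is finite, for any prescribed area bound $A$ there are only finitely many words $\ul{\gamma}_+$ of given length that can appear with non-zero coefficient up to area $A$. In particular, given any length threshold $\ell_0$, output words of length $\le\ell_0$ contribute only finitely many terms at any area level, which is exactly the convergence condition in \eqref{CE}.

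The main obstacle is the bookkeeping in (b): the coefficient ring $\hat{G}(\Lambda)$ is already a completion over homology classes, and the chain complex is a further completion over words, so one must disentangle these two completions and verify that the $\hat{G}(\Lambda)$-valued function on $\cW(\Lambda)$ defined by $\delta(\ul{\gamma}_-)$ is compatible with both. The cleanest way to handle this is to fix an area cutoff $A$ and a classical length cutoff $\ell_0$ simultaneously, apply Gromov compactness to the union of moduli spaces $\bigsqcup_{\ul{\gamma}_+:\ell_\black(\ul{\gamma}_+)\le\ell_0}\M(L,\ul{\gamma}_-,\ul{\gamma}_+)_0^{<A}$ bounded by the single energy constraint, deduce finiteness, and then let $A,\ell_0\to\infty$ to identify the resulting element with an honest element of $CE(\Lambda)$. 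Once this is verified on generators, $\hat{G}(\Lambda)$-linearity extends the definition to all of $CE(\Lambda)$ and continuity with respect to the filtration by $\ell_\black$ is automatic from the area--length bounds above.
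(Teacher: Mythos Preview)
Your two-part structure matches the paper's: verify that each coefficient lies in $\hat{G}(\Lambda)$, and verify that only finitely many output words of bounded length occur. Part (a) is sound in outline, though the paper takes a more concrete route: rather than invoking SFT-type compactness for punctured disks directly, it projects to the base $(Y,\Pi)$, uses ordinary Gromov compactness there to bound the number of homology classes of area $<A$, and then appeals to the lifting bijection (Theorem~\ref{liftthm}) to conclude that each such class admits only finitely many punctured lifts. This avoids any appeal to compactness in the symplectization itself.

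Part (b), however, contains a genuine error. Your claim that each classical generator in $\ul{\gamma}_+$ ``carries a positive area cost coming from the monotonicity of the bounding disk bubble'' is false: classical outputs are endpoints of Morse trajectories on $\Lambda$ attached to the boundary of a disk, and attaching additional such edges to the \emph{same} underlying holomorphic disk costs no extra area whatsoever. Indeed, this is exactly the phenomenon in Theorem~\ref{repeating}, where one disk produces infinitely many configurations with arbitrarily many degree-one classical outputs, all of the same area---and is precisely the reason the word-length completion in \eqref{CE} is needed in the first place. Similarly, your phrasing that each outgoing Reeb chord ``contributes a definite positive amount to the symplectic area'' has the sign backwards: by the angle-change formula, outgoing angles \emph{subtract} from the incoming angle budget.

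The correct argument for (b), as the paper gives it, does not use area at all. The angle-change formula bounds the total outgoing Reeb angle by the total incoming angle (since the area term is non-negative), so only finitely many Reeb generators can occur as outputs; the set $\cI_\black(\Lambda)$ of classical generators is finite since $\Lambda$ is compact; hence for each fixed word length $\ell$ there are only finitely many possible output words, independent of any area cutoff.
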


\begin{proof}   We show that the completion with respect to word length makes the infinite sum in the Definition \eqref{deltadef} well-defined.  Let 
\[ \ul{\gamma}_-
\in \cI(\Lambda)^d \]
be a collection of incoming generators. 
It suffices to check that for any particular word length $\ell \in \Z_{\ge 0}$ there
exist finitely many terms in $\delta(\ul{\gamma}_-)$ bounded by the given word length $\ell$, 
and for each such possible output $\ul{\gamma}_+$
and energy $A > 0$, there exist finitely many disks of energy at most $A$
contributing to the coefficient of $\ul{\gamma}_+$ in $\delta(\ul{\gamma}_-)$.
By \cite[Lemma \ref{I-anglechange}]{BCSW1}, the total action in $\ul{\gamma}_+$ is bounded.  
Thus only finitely many Reeb chords in $\cR(\Lambda)$ are possible, for the given input 
$\ul{\gamma}_-$.  It follows that the possibly outputs $\ul{\gamma}_+$
of a rigid configuration are finite in number.  
By Gromov compactness for maps to $(Y,\Pi)$, the number of homology classes
in $(Y,\Pi)$ represented by holomorphic disks of area less than $A$ is finite.
For each such disk there exist finitely many possible lifts to punctured disks mapping to $(\R \times Z, \R \times \Lambda)$,
by Theorem \ref{liftthm}.  It follows that the number of disks contributing to each $\ul{\gamma}_+$ of area below the given constant $A$ is finite.
\end{proof}

The differential can be understood in terms of 
disk counts, rather than counts of disconnected surfaces.

\begin{definition} 
A {\em trivial strip} is a map $u: \R \times [0,1] \to \R \times Z$
that is an open immersion in some fiber ${p}^{-1}(u)$ of the projection $\R \times Z \to Y$
of the form 
\[ u(s,t) =  e^{\theta(s + it)}z \] 
for some constant $\theta$
and point $z \in Z$.
\end{definition}

\begin{lemma}  \label{trivstrip} Any disconnected
stable rigid configuration
$u: C \to X$ has 
exactly one connected component $u_i : C_i \to X$ that is not a trivial strip.
\end{lemma}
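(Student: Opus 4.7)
The plan is a dimension count on the full moduli space of the disconnected configuration. Write $u = u_1 \sqcup \cdots \sqcup u_k$, with each component $u_i$ carrying one incoming puncture (the running assumption in the moduli problem defining $\delta$), and separate the components into non-trivial ones (indexed by $I_{nt}$) and trivial strips (indexed by $I_t$). Let $\M_i$ denote the moduli space of $u_i$ modulo domain automorphisms preserving the marked asymptotic punctures.

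For each non-trivial rigid component, target $\R$-translation acts freely on $\M_i$, so $\dim \M_i = 1$ (the translation parameter). For a trivial strip $u_i \colon \R \times [0,1] \to \R \times Z$, with explicit parametrization $(s,t) \mapsto (as + c, \gamma(at))$, the domain automorphism $s \mapsto s + s_0$ produces the same effect on the map as target $\R$-translation; hence after quotienting by domain automorphisms, $\M_i$ reduces to a single point and the residual target $\R$-action is trivial. In particular a trivial strip contributes nothing to either the dimension of the product $\prod_i \M_i$ or to the non-triviality of the diagonal $\R$-action on it.

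The configuration moduli space is obtained from the product $\prod_i \M_i$ (together with matching conditions from Donaldson hypersurface crossings and tree edges as in \cite{BCSW1}, which do not affect the target translation count) by further quotienting by the diagonal target $\R$-translation. This diagonal action is free provided $|I_{nt}| \geq 1$, so rigidity of $u$ translates into
\[ \sum_{i \in I_{nt}} \dim \M_i + \sum_{i \in I_t} \dim \M_i - 1 = |I_{nt}| - 1 = 0, \]
forcing $|I_{nt}| = 1$. The degenerate case $|I_{nt}| = 0$ (all strips trivial) tautologically represents the identity correspondence $\ul{\gamma}_+ = \ul{\gamma}_-$, which is excluded from the moduli spaces entering the definition of $\delta$; these consist of genuine disconnected punctured disks with some non-trivial content, not an entirely constant configuration.

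The main obstacle is justifying the identification on trivial strips of the domain automorphism action with target $\R$-translation, and verifying that the various matching constraints (Donaldson hypersurface crossings, tree edges, and interior marked points) do not alter the one-dimensional gain in $\vdim$ contributed by each non-trivial component. The former is transparent from the explicit parametrization of trivial strips, while the latter follows from the fact that these matching constraints are imposed in the symplectic base $Y$ and hence are invariant under target $\R$-translation in the symplectization, as recorded in the index and transversality bookkeeping of \cite{BCSW1}.
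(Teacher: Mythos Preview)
Your argument is correct and rests on the same idea as the paper: the components can be translated independently in the $\R$-direction, while rigidity permits only the single diagonal translation. The paper phrases this by noting that $u_1 \sqcup \lambda u_2$ must be isomorphic to $u_1 \sqcup u_2$ for every $\lambda$, which forces one component to carry a positive-dimensional automorphism group and hence to be a trivial strip; you package the same observation as a dimension count. One small slip: a free $\R$-action gives only $\dim \M_i \geq 1$ for each non-trivial component, not equality, so your displayed equation should read $|I_{nt}| - 1 \leq 0$, after which stability yields $|I_{nt}| = 1$.
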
 

\begin{proof}   The statement of the Lemma follows from the fact that the components of the union may be translated separately.  Suppose that $u : C \to \R \times Z$ is a configuration given as the disjoint union of $u_1$ and $u_2$ with connected components and 
$\lambda \in \R$ is scalar.  The union of  $u_1$
and $\lambda u_2$ is a union of punctured disks, necessarily isomorphic to 
$u$ since $u$ is rigid.  Thus there exists a scalar $\mu(\lambda)$
and an automorphism $\psi: C \to C $ restricting to automorphisms 
\[ \psi_1:= \psi |_{C_1}: C_1 \to C_1, \quad \psi_2:= \psi |_{C_2} :C_2 \to C_2 \] 
such that 
\[ \mu(\lambda) u_1 \circ \psi_1
\sqcup (\mu(\lambda) + \lambda) u_2 \circ \psi_2 = u_1 \sqcup u_2 . \] 
Suppose $\lambda \ne 0$, and without loss of generality $\lambda+ \mu(\lambda)$  is non-zero.   Then
\[ (\lambda + \mu(\lambda))u_2 = u_2 \circ \psi_2^{-1}  .\] 
This equality implies that $u_2$ has an automorphism group  of positive dimension.
Necessarily, $C_2$ is a disk with two punctures, one incoming and one outgoing, and 
$u_2$ maps $C_2$ to a single fiber of the projection $\R \times Z \to Y$. 
Thus $u_2$ is a trivial strip.  
By the stability condition, not all 
the components are trivial strips, so $u_1$
must not be a trivial strip.  The case of maps with more than two connected components in the domain is similar.   
\end{proof} 

\begin{proposition}  The map $\delta$ satisfies the  Leibniz rule
\[  \delta (\gamma_1 \gamma_2) = \delta (\gamma_1) \gamma_2 + (-1)^{\deg_{\Z_2}(\gamma_1)} \gamma_1
\delta (\gamma_2) . \]
\end{proposition}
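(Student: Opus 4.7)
The plan is to exploit Lemma \ref{trivstrip}, which says that any rigid disconnected configuration contributing to $\delta$ has exactly one non-trivial component while the remaining components are trivial strips. Since a trivial strip carries a single Reeb chord from an incoming puncture to an outgoing puncture without change, this means that a rigid configuration contributing to $\delta$ on a word acts by \emph{replacing exactly one letter} of the input word with the output word of the non-trivial disk, while the other letters are transported unchanged by trivial strips to the output, preserving their position in the cyclic ordering.

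More precisely, write $\gamma_1 = a_1 \otimes \cdots \otimes a_k$ and $\gamma_2 = b_1 \otimes \cdots \otimes b_l$ as words in $\cW(\Lambda)$. The first step is to enumerate the rigid configurations $u \in \M(L,\gamma_1\gamma_2,\underline{\gamma}_+)_0$ according to which of the $k+l$ input letters is read by the unique non-trivial component of $u$. Those configurations whose non-trivial component reads some $a_i$ biject with pairs $(u_1,\{b_1,\ldots,b_l\})$, where $u_1$ contributes to $\delta(a_i)$ and the $b_j$'s appear in the output in their original positions via trivial strips; summing over $i$ and over such $u_1$ yields exactly $\delta(\gamma_1)\gamma_2$. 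Symmetrically, the configurations whose non-trivial component reads some $b_j$ assemble into $\gamma_1\,\delta(\gamma_2)$. Together with the weight formula \eqref{wu} (the orientation signs and perturbation coefficients factor through the non-trivial component since trivial strips are canonically oriented and carry no area), these two collections account for every rigid configuration.

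What requires care is the Koszul sign. The non-trivial component sits at some position in the cyclic boundary ordering; when it lies inside $\gamma_2$, reorganizing the trivial strips so that the non-trivial component is written at the front of the $\gamma_2$-block (i.e.\ rewriting the count as $\gamma_1\,\delta(\gamma_2)$) involves commuting $\delta$ past $\gamma_1$. The sign rules for orienting moduli spaces of punctured disks established in \cite[Section \ref{I-foundsec}]{BCSW1} are set up precisely so that this commutation produces the factor $(-1)^{\deg_{\Z_2}(\gamma_1)}$, coming from the odd shift in the $\Z_2$-grading built into Definition \ref{gradings}(a) (where $\deg_{\Z_2}\gamma = \deg_{\Z_2}x + 1$). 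Verifying this is the only non-formal step; I would trace through the orientation conventions on the disconnected domain, using the trivial strip lemma to identify the trivial-strip determinant line canonically with $\R$ and reduce to the sign coming from reordering the boundary punctures of a trivial strip past the non-trivial component.

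Once these pieces are assembled, summing over $\underline{\gamma}_+$ and collecting the weights gives the Leibniz identity $\delta(\gamma_1\gamma_2) = \delta(\gamma_1)\gamma_2 + (-1)^{\deg_{\Z_2}(\gamma_1)} \gamma_1\,\delta(\gamma_2)$. The argument extends by continuity from finite sums to the completion $CE(\Lambda)$, since the Leibniz rule is compatible with the filtration by classical word length $\ell_\black$ used in \eqref{CE}.
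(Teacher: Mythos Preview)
Your proposal is correct and follows essentially the same route as the paper: both arguments rest on Lemma \ref{trivstrip} to conclude that in any rigid disconnected configuration exactly one component is non-trivial, so the differential acts on a single letter while the others pass through as trivial strips. The paper phrases this for length-one inputs $\gamma_1,\gamma_2\in\cI(\Lambda)$ (so the domain has exactly two components) and leaves the sign implicit, whereas you write out the general-word case and trace the Koszul sign more explicitly; these are differences of detail rather than of approach.
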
 

\begin{proof}   Since each connected component $C_i$ of the domain $C$ is assumed to have 
one incoming end $e \in \mE(S_i)$, any configuration $u: C \to X$ with two inputs $\gamma_1, \gamma_2$
is the disjoint union of  two components $u_1: C_1 \to X$ and $u_2 : C \to X $
each with a single incoming end. Exactly one of these is a  trivial strip by 
Lemma \ref{trivstrip}.  The statement of the Proposition follows. 
\end{proof}

\begin{theorem} With $(Z,\Lambda)$ and $\ul{P}$ as above, \label{tsquare} $\delta^2 = 0.$
\end{theorem} 

\begin{proof} By the description of the true boundary components of the moduli space of buildings $u \in \M_\Gamma(\Lambda)$ in \cite[Theorem \ref{I-twolevels2}]{BCSW1}, boundary points in the one-dimensional moduli
  spaces correspond to treed configurations with two levels
  $ u = (u_1,u_2)$.   The assumptions
  imply that $\R \times Z$ is tame by \cite[Lemma \ref{I-qtop}]{BCSW1}. 
  Moreover, \cite[Lemma \ref{I-1cons}]{BCSW1} and \cite[Lemma \ref{I-nospheres}]{BCSW1} imply that each component $u_v: C_v \to \XX$ of 
  $u$ is a disk 
  with at least one incoming Reeb chord.   Rigidity then implies
  that in each level $u_i$, exactly one connected component $u_v$ is not a trivial 
  strip, by Lemma \ref{trivstrip}.  Thus 
  $u_v$ has exactly one incoming puncture labelled by a Reeb chord $\gamma_-$ and 
  some non-negative number of outgoing punctures.  To check signs, 
  (the sign computation for the case of non-degenerate Reeb chords was
  carried out in Ekholm-Etnyre-Sullivan \cite{ees:orient})  let %
  \[ 
  \delta_d(\gamma_0;\gamma_1,\ldots,\gamma_d) \in \mhat{G}(\Lambda) \] 
  be the coefficient 
  of $\gamma_1 \ldots \gamma_d$ in $\delta_d(\gamma_0)$.    The coefficient
  of $\gamma_1 \ldots \gamma_d$ in $\delta^2(\gamma_0)$ is 
  \begin{multline} \sum_{e + f = d,\gamma_0'}  (-1)^{ \deg_{\Z_2}(\gamma_1) + 
  \ldots + \deg_{\Z_2}(\gamma_e)}
  \delta_{d - e+ 1}(\gamma_0,\gamma_1,\dots,\gamma_e, \gamma_0', \gamma_{e+f + 1},\ldots, \gamma_d) \\
  \delta_e(\gamma_0'; \gamma_{e+1}, \ldots \gamma_{e+f}) = 0.  
  \end{multline}
  This  identity is the \ainfty relation for the Lagrangian projection $\Pi \subset Y$
  and follows by the same computation; the most standard reference would be Seidel's book 
  \cite{se:bo} while our particular conventions are detailed in Charest-Woodward \cite{flips}.
\end{proof} 

\begin{definition} 
The contact homology of a compact, spin Legendrian $\Lambda$ in a compact circle-fibered
contact manifold $(Z,\alpha)$ is 
\[ HE(\Lambda) = \frac{\ker(\delta)}{\im(\delta)} .\]
\end{definition} 

\begin{lemma} The differential $\delta$ has degree $-1$ with respect to 
any of the gradings (over $\Z_2$, $\Z$, or $\R$) in Definition \ref{gradings}
that may be defined.  In particular, in the case that $\Pi$ is monotone, 
the homology $HE(\Lambda)$ inherits an $\R$-grading and $\Z_2$-grading from $CE(\Lambda)$.
\end{lemma}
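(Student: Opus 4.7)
The plan is to verify the degree shift term-by-term. Fix a rigid configuration $u : C \to X$ contributing to the coefficient of $\gamma_1 \otimes \cdots \otimes \gamma_d$ in $\delta(\gamma_-)$, where $\gamma_- \in \cI(\Lambda)$ and each $\gamma_j \in \cI_\white(\Lambda) \cup \cI_\black(\Lambda)$. It suffices to show
\[
\sum_{j=1}^{d} \deg(\gamma_j) \;=\; \deg(\gamma_-) - 1
\]
for each such $u$, after which both well-definedness of the induced gradings on $HE(\Lambda)$ and the degree-$(-1)$ property of $\delta$ are immediate consequences of linearity and the identification $\im(\delta) \subset \ker(\delta)$.

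For the $\R$-grading in the monotone case, I would combine three ingredients. First, rigidity says that $\vdim \M_\Gamma(\Lambda) = 0$ at $u$, so the Fredholm index computation of \cite[Section \ref{I-fredtheory}]{BCSW1} together with domain-moduli and automorphism contributions sum to zero. Second, by Proposition \ref{liftprop} and the lifting bijection of Theorem \ref{liftthm}, the index of the linearized operator on $u$ equals that of the projected disk $p_Y\circ u$, which by Riemann-Roch equals the Maslov index of $(p_Y\circ u)^* TY$ relative to the Lagrangian boundary condition. Third, monotonicity converts Maslov to area, $\mu(p_Y\circ u) = 2\tau\, A(p_Y\circ u)$, and the angle-change formula \cite[Lemma \ref{I-anglechange}]{BCSW1} gives
\[
\theta_- \;=\; \sum_{j : \gamma_j \in \cI_\white(\Lambda)} \theta_j \;+\; 2\pi\, A(p_Y\circ u).
\]
Substituting these into the definition $|\gamma| = \ind(f_\white)(\gamma) + \tau\theta/\pi - 1$ for Reeb generators and $|\gamma| = \ind(f_\black)(\gamma)$ for classical generators (where the contributions of classical generators to $\vdim$ are precisely the Morse indices, after accounting for the matching condition on each interior edge contributing a codimension-one constraint), the desired equality reduces, after cancellation of the $\tau\theta/\pi$ terms using the angle-change formula, to $\vdim \M_\Gamma(\Lambda) = 0$.

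For the $\Z_2$-grading (which does not require monotonicity), the argument is the same but simpler: the parity of $\vdim$ agrees with the parity of $\sum_j \deg_{\Z_2}(\gamma_j) - \deg_{\Z_2}(\gamma_-) + 1$ where the extra $+1$ encodes the shift in Definition \ref{gradings}, and vanishing of $\vdim$ mod $2$ gives $\deg_{\Z_2}(\delta)\equiv 1$. For $\Pi$ immersed, the orientation sign contribution at a self-intersection point $x = (x_-,x_+)$ is absorbed into the definition of $\deg_{\Z_2}(x)$, which is compatible with the relative spin structure from \cite[Section \ref{I-foundsec}]{BCSW1}.

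The main obstacle is bookkeeping: one must check that the grading shift of $-1$ for Reeb generators and shift of $0$ for classical generators line up with the Fredholm-index contributions of each puncture type, including the edge-matching constraints at nodes where classical generators appear as inputs or outputs. This is mechanical once conventions are fixed, and parallels the analogous computation for the $A_\infty$-algebra of the Lagrangian projection $\Pi \subset Y$ (which is exactly what underlies the $\delta^2 = 0$ argument of Theorem \ref{tsquare}). The induced gradings on $HE(\Lambda)$ then descend automatically since $\delta$ preserves the filtrations defining $CE(\Lambda)$.
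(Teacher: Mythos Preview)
Your approach is essentially the same as the paper's: both arguments reduce the degree computation for the $\R$-grading to rigidity plus the angle-change formula \cite[Lemma \ref{I-anglechange}]{BCSW1} combined with monotonicity, and both defer the $\Z_2$-case to the standard Fukaya-algebra computation for $\Pi$. The paper's proof is terser, writing the index identity directly from the angle-change lemma without routing through Proposition \ref{liftprop} or Theorem \ref{liftthm}; those results are not needed here, since the index formula in \cite[Lemma \ref{I-anglechange}]{BCSW1} already encodes the relationship to the Maslov index $I(u_Y)$ of the projection. One small caution: your phrasing ``the index of the linearized operator on $u$ equals that of the projected disk'' is not quite what Proposition \ref{liftprop} says---the vertical part contributes a one-dimensional kernel (translation), so the indices differ by one---but since you are only using this to feed into the rigidity condition, the discrepancy is absorbed in the bookkeeping you already flag.
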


\begin{proof} We apply the angle change formula \cite[Lemma \ref{I-anglechange}]{BCSW1}, 
which is an application of Stokes' formula.  Suppose $u: C \to X$ is a rigid punctured disk.  By \cite[Lemma \ref{I-anglechange}]{BCSW1} and monotonicity,
\begin{eqnarray*}
\sum_{e \in \cE_+ (S)} (\tau \theta_e  - 1) - 
\sum_{e \in \cE_-(S)} (\tau \theta_e - 1) &=&
(d- 2) + 2 \tau \int_{S} u_Y^* \omega_Y  \\
&=&  (d - 2) +  I(u_Y)  = 1 . 
\end{eqnarray*}
It follows that the degrees of the generators
are related by 
\[   
\sum_{e \in \cE_+ (S)} \deg_\R \gamma_e = 
\sum_{e \in \cE_- (S)} \deg_\R \gamma_e - 1. \]
The claim for the real degree follows. The claim for the $\Z_2$-degree is the same as in the Fukaya algebra case, and left to the reader.
\end{proof}

\begin{lemma} Let $(Z,\Lambda)$ be a tame pair as above.  If $CE(\Lambda)$ is defined
over the group ring $H_1(\Lambda)$ resp. 
$H_2(Z,\Lambda)$ then the contact differential $\delta$ is independent, up to isomorphism, of the choice of capping paths resp. capping disks. \end{lemma}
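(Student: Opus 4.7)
The plan is to exhibit an explicit isomorphism of chain complexes that intertwines the two differentials, given by a diagonal change of basis on generators. Fix two choices of capping data (paths in $\Lambda$, respectively disks in $(Z,\Lambda)$) for each Reeb chord $\gamma \in \cR(\Lambda)$; call them $\sigma(\gamma)$ and $\sigma'(\gamma)$. Their concatenation $\sigma'(\gamma) - \sigma(\gamma)$ is a loop in $\Lambda$, respectively a closed relative $2$-cycle in $(Z,\Lambda)$, and so defines a class
\[
c(\gamma) \in H_1(\Lambda) \quad \text{resp.} \quad c(\gamma) \in H_2(Z,\Lambda).
\]
For classical generators $\gamma \in \cI_\black(\Lambda)$, no capping is needed, so set $c(\gamma) = 0$.

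Next I would define an endomorphism of the coefficient module by
\[
\Phi: CE(\Lambda) \to CE(\Lambda), \quad \gamma \longmapsto [c(\gamma)]\,\gamma,
\]
and extend multiplicatively on words: $\Phi(\gamma_1 \otimes \cdots \otimes \gamma_k) = [c(\gamma_1) + \cdots + c(\gamma_k)]\,\gamma_1 \otimes \cdots \otimes \gamma_k$. Since both coefficient rings used are group rings, $\Phi$ is a $\hat G(\Lambda)$-linear automorphism on each length-graded piece $CE_\ell(\Lambda)$, and is evidently continuous for the word-length filtration, hence extends to the completion.

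The main step is to verify $\delta' \circ \Phi = \Phi \circ \delta$ on an individual Reeb-chord generator $\gamma_-$. For each rigid punctured disk $u: C \to \R\times Z$ contributing to $\delta(\gamma_-)$ with outputs $\gamma_+^1,\ldots,\gamma_+^d$, the weights appearing in $\delta$ and $\delta'$ differ only in the homology class attached to $\partial u$. Capping the boundary of $u$ on the incoming end by $\sigma(\gamma_-)$ and on the outgoing ends by $\sigma(\gamma_+^i)$, versus capping with the primed paths/disks, differs by precisely
\[
c(\gamma_-) \;-\; \sum_{i=1}^d c(\gamma_+^i).
\]
This is the cocycle identity I need to check carefully: it follows from the elementary observation that the boundary circles of $u$ glue with the capping data along punctures to produce either a $1$-cycle in $\Lambda$ or a relative $2$-cycle in $(Z,\Lambda)$, and changing the cappings one puncture at a time alters this class by the corresponding $c(\gamma)$. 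Consequently the coefficient in $\delta'(\Phi(\gamma_-))$ of a word $\gamma_+^1\otimes\cdots\otimes\gamma_+^d$ equals $[c(\gamma_-)]$ times the coefficient in $\Phi(\delta(\gamma_-))$ of the same word, after cancellation by $[-\sum_i c(\gamma_+^i)]$ on the other side.

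The main obstacle is bookkeeping the cappings at incoming versus outgoing punctures with consistent orientation conventions, especially in the relative $H_2(Z,\Lambda)$ case where the capping disks must be glued with appropriate signs; this is routine but requires pinning down the conventions used in the definition of $[p_Y \circ u]$ from \cite[Equation \eqref{I-cappedoff}]{BCSW1}. Once this identity holds, $\Phi$ is a chain isomorphism between the two versions of $(CE(\Lambda),\delta)$, proving the invariance claim.
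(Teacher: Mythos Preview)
Your proposal is correct and follows essentially the same approach as the paper: both construct a diagonal automorphism $\gamma \mapsto [c(\gamma)]\gamma$ from the difference class of the two capping choices, extend multiplicatively to words, and check that it intertwines the two differentials by the observation that changing cappings shifts the capped-off homology class by exactly $c(\gamma_-) - \sum_i c(\gamma_+^i)$. Your write-up is in fact more careful than the paper's (which is quite terse), explicitly noting the extension to the completion and flagging the orientation bookkeeping at incoming versus outgoing punctures.
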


\begin{proof} 
Given two choices of capping paths 
\[ \hat{\gamma}_b: [0,1] \to \Lambda, \quad \hat{\gamma}_b': [0,1] \to \Lambda, \quad b \in \{ 0, 1 \} \]
let 
\[ \delta: CE(\Lambda) \to CE(\Lambda), \quad \delta' : CE(\Lambda) \to CE(\Lambda) \]
be the corresponding differentials.     The difference between the capping paths is a collection of loops 
\[ \thorn_b(\gamma) : S^1 \to \Lambda, \quad \thorn := \hat{\gamma}_b^{-1} \# \hat{\gamma}_b' .\]
Given a disk $u: C \to X$ contributing to $\delta$ and $\delta'$, the homology classes
$[\partial u]$ and $[\partial u']$ are related by 
\[ [ \partial u]' = [\partial u] \prod_\gamma \thorn(\gamma) \] 
where the product is over Reeb chords at the punctures of $u.$  It follows that the map 
\[ CE(\Lambda) \to CE(\Lambda), \quad \gamma \mapsto \thorn_0(\gamma)^{-1} \thorn_1(\gamma) \gamma \]
(with products taken as delta functions in the group ring)  intertwines the differentials $\delta$ and $\delta'$.  The case of coefficient
ring the group ring on $H_2(Z,\Lambda)$ is similar.
\end{proof}

We next compute some of the lowest order contributions to the differential.  In particular,
there are contributions to the differential from zero area disks with no punctures, which already
arise in the Morse \ainfty algebra of the Legendrian.  Recall from \eqref{sectors} that the {\em classical sector} $CE_\black(\Lambda) \subset CE(\Lambda)$ is generated by words 
$\cW_\black(\Lambda)$ on critical points
of the Morse function on the Legendrian.

\begin{lemma} \label{preserves} The differential $\delta$ preserves the classical sector 
$ CE_\black(\Lambda)$.  Furthermore, for any generator $\gamma
\in \cI_\black(\Lambda)$ the image $\delta(\gamma) \in CE(\Lambda)$
is equal to the Morse boundary $\delta_{\on{Morse}}(\gamma)$ plus multiples of words
$w \in \cW(\Lambda)$ of length at least two.
\end{lemma}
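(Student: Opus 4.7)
The plan is to combine the angle change constraint with stability of treed disks. Let $\gamma \in \cI_\black(\Lambda)$ and consider a rigid configuration $u:C\to X$ contributing to $\delta(\gamma)$, with input $\gamma$ and output word $\ul{\gamma}_+$. Classical generators sit at interior Morse edges and carry zero angle change, while Reeb generators at boundary punctures carry positive angle change. By \cite[Lemma \ref{I-anglechange}]{BCSW1},
\[
\sum_{e \in \cE_+(S)} \theta_e \;-\; \sum_{e \in \cE_-(S)} \theta_e \;=\; 2\tau \int_S u_Y^* \omega_Y \;\ge\; 0,
\]
where $\cE_{\pm}(S)$ denote the outgoing resp.\ incoming Reeb punctures. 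Since the only input $\gamma$ is classical, the incoming sum vanishes and the total outgoing Reeb angle equals $2\tau$ times the symplectic area of the surface part.

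For the output word to lie in $CE_\black(\Lambda)$, every outgoing generator must be classical, so $\cE_+(S) = \emptyset$, which forces the area to vanish and hence all disk components to be constant. Any such rigid configuration collapses to a purely Morse tree with one incoming edge at $\gamma$ and finitely many outgoing edges at critical points, whose contribution lies in $CE_\black(\Lambda)$. This proves the first assertion.

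For the second assertion the plan is to restrict to the length-one part of $\delta(\gamma)$. The contributing configurations are Morse trees with one incoming and one outgoing edge and only constant disk components. By the stability condition for treed disks, a constant disk with just two boundary markings and no punctures is unstable, so no disk vertex can appear; the unique stable shape is an unbroken Morse flowline from $\gamma$ to another critical point $\gamma'$. The weighted rigid count of such flowlines is by construction the Morse differential $\delta_{\on{Morse}}(\gamma)$, following the Morse--$\ainfty$ conventions of Charest--Woodward \cite{flips}. All remaining contributions to $\delta(\gamma)$ in $CE_\black(\Lambda)$ have at least two outgoing edges, and hence word length at least two. The main obstacle I expect is in ruling out more elaborate Morse trees built from constant disk vertices that might contribute at length one; this is where the stability condition for treed disks must be used carefully, but is otherwise standard once the angle/area argument above has eliminated all non-constant disk components.
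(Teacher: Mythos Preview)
Your argument for the second assertion (that the length-one part is exactly the Morse differential) is fine and matches the paper: once all disk components are constant, stability forces each constant disk to carry at least three boundary edges, so a configuration with one input and one output has no disk vertex and is a single Morse trajectory.

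The gap is in your proof of the first assertion. You write ``For the output word to lie in $CE_\black(\Lambda)$, every outgoing generator must be classical, so $\cE_+(S)=\emptyset$, which forces the area to vanish\ldots'' and then conclude ``This proves the first assertion.'' But this is circular: you are assuming the output lies in $CE_\black(\Lambda)$ in order to conclude that the contribution lies in $CE_\black(\Lambda)$. What you must show is that \emph{every} rigid configuration with classical input has no outgoing Reeb punctures, not merely analyze those that happen to have none. As written, your displayed inequality $\sum_{\cE_+}\theta_e-\sum_{\cE_-}\theta_e=2\tau\int u_Y^*\omega_Y\ge 0$ with $\cE_-=\emptyset$ says only that the total outgoing Reeb angle equals a non-negative number, which does not force it to vanish.

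The paper closes this gap by invoking \cite[Corollary \ref{I-onepos}]{BCSW1}: any non-constant punctured disk in the symplectization bounding $\R\times\Lambda$ has at least one \emph{incoming} puncture. Equivalently, in the symplectization the $d\alpha$-energy is the incoming Reeb action minus the outgoing Reeb action, so with no incoming Reeb chord the outgoing total must be non-positive, hence zero. Either correct the sign and direction of your inequality and argue that vanishing incoming angle forces vanishing outgoing angle (and hence zero area), or simply cite \cite[Corollary \ref{I-onepos}]{BCSW1} as the paper does. With that fix in place, your proof is essentially the paper's.
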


\begin{proof}    
By \cite[Corollary \ref{I-onepos}]{BCSW1}, any non-constant punctured disk has at least one incoming puncture.  Therefore the disks $u: C \to \R \times Z$ contributing to $\delta | CE_\black(\Lambda)$ are all constant, and
have no outgoing punctures.     Because of stability, each disk
component $u_v: S_v \to \R \times Z$ must have at least three adjacent edges $T_e \subset C$,
so $u$ must have at least one outgoing edge.  If there is a single outgoing edge, then $u$ consists of a 
Morse trajectory $u : C \cong \R \to \R \times Z$ and no disks. 
Thus $u$ contributes to the Morse differential $\delta_{\on{Morse}}$. 
Otherwise, there are at least two outgoing edges $T_{e_1},T_{e_2}$ and the output $\ul{\gamma}_+ \in \cW(\Lambda)$ is a word of length at least two.
\end{proof}

\begin{example} \label{twotorus} Suppose that $\Lambda = T^2$ is the two-torus with its standard Morse function given by the sum of the height functions on each of its circle factors. 
Let $\bb_\black \in \cI_\black(\Lambda)$ be the unique degree one (Morse degree two) generator.   The Morse
differential $\delta_{\on{Morse}}(\bb_\black) = 0$.  On the other hand, let 
\[ \cc_{\black,1},\cc_{\black,2} \in \cI_\black(\Lambda) \]
be the degree zero (Morse degree one) generators.  The unstable manifolds for $\cc_{\black,1},\cc_{\black,2}$
intersect in a unique point, contained in the (dimension two) stable manifold of $\bb_\black$ for generic choices of Morse datum.  As a result,
the leading order terms in $\delta(\bb_\black)$ (in the sense of lowest area and fewest outputs) are 
\[ \delta(\bb_\black) = \pm \cc_{\black,1} \cc_{\black,2} \mp  \cc_{\black,2} \cc_{\black,1}  + \ldots . \] 
Let $\aa_\black$ denote the degree $-1$ (Morse degree zero) generator in $\cI_\black(\Lambda).$  Constant disks to $\bb_\black$
with two outputs labelled $\aa_\black,\bb_\black$ and input labelled $\bb$ are rigid and contribute
\[ \delta(\bb_\black) = \ldots + \bb_\black \aa_\black  + \aa_\black \bb_\black  + \ldots .\]
The  higher order terms depend on choices of orientations  and the additional terms depend on the particular perturbation scheme (for example, $\aa_\black$ may be dual to a strict unit in the Morse \ainfty algebra of $\Lambda$, or not).
\end{example}

The structure coefficients of the differential on the classical sector for $\Lambda$ are equal to structure coefficients of the Morse \ainfty algebra for the Lagrangian $\Pi$.   Since self-intersections of cycles are not transverse, these structure coefficients depend on the perturbations chosen. 

\begin{remark}
The differential induces a {\em reduced differential } $\delta_\white$ on 
the space of chains $  CE_\white(\Lambda)$ generated by words on $\cI_\white(\Lambda)$
corresponding to Reeb chords by omitting the classical generators
in the definition of $\cW(\Lambda)$ in \eqref{words}. Define 
\[ HE_\white(\Lambda) = \frac{ \on{ker}(\delta_\white) }{  \on{im}(\delta_\white)} .\]
Although there is a well-defined homology theory corresponding to allowing Reeb generators
only, cobordisms such as the Harvey-Lawson filling do not in general define chain maps 
for this theory, at least directly and we will not have any use for the homology theory 
$HE_\white(\Lambda)$ in this paper. 
\end{remark} 

\subsection{Special perturbations}
\label{divisor}

In this section,  we show the existence of perturbations with special properties so that, for example, a version of the divisor equation holds. The basic 
phenomenon is the following:  
 Consider the situation that all holomorphic disks are made
regular by some domain-independent almost complex structure.  

\begin{definition}
Let $u: C \to X$ be a rigid configuration with such a leaf $T_e $.   For any integer $k \ge 1$, define a new configuration $u_{(k)}: C_{(k)} \to X$ by replacing $T_e$
with the union of a disk $S_v$ and a collection $T_{e_1},\ldots, T_{e_k}$
of $k$ copies of $T_e$:
\[ C_{(k)} =  C_{(k),z_1,\ldots, z_k} = (C - T_e) \cup (T_{e_1} \cup \ldots \cup T_{e_k}) \]
with attaching points chosen at some points $z_1,\ldots, z_k$ on the boundary of $S_v$.  There is 
A configuration $u_{(k)}$ obtained from a configuration $u$ by replacing 
an edge $T_e$ by $k$-copies, attached to a constant disk $S_v$
is said to be {\em  obtained repeating inputs}.
\end{definition}

The configurations described by the Definition are typically not regular.  Indeed
for $k \ge 2$ the configuration $u_{(k)}$ lies in a stratum of dimension at least 
$k - 2$ more than that containing $u$, which is not equal to the expected
dimension.  

 \begin{figure}[ht]
     \centering
\begingroup%
  \makeatletter%
  \providecommand\color[2][]{%
    \errmessage{(Inkscape) Color is used for the text in Inkscape, but the package 'color.sty' is not loaded}%
    \renewcommand\color[2][]{}%
  }%
  \providecommand\transparent[1]{%
    \errmessage{(Inkscape) Transparency is used (non-zero) for the text in Inkscape, but the package 'transparent.sty' is not loaded}%
    \renewcommand\transparent[1]{}%
  }%
  \providecommand\rotatebox[2]{#2}%
  \newcommand*\fsize{\dimexpr\f@size pt\relax}%
  \newcommand*\lineheight[1]{\fontsize{\fsize}{#1\fsize}\selectfont}%
  \ifx\svgwidth\undefined%
    \setlength{\unitlength}{288.03092176bp}%
    \ifx\svgscale\undefined%
      \relax%
    \else%
      \setlength{\unitlength}{\unitlength * \real{\svgscale}}%
    \fi%
  \else%
    \setlength{\unitlength}{\svgwidth}%
  \fi%
  \global\let\svgwidth\undefined%
  \global\let\svgscale\undefined%
  \makeatother%
  \begin{picture}(1,0.59376026)%
    \lineheight{1}%
    \setlength\tabcolsep{0pt}%
    \put(0,0){\includegraphics[width=\unitlength,page=1]{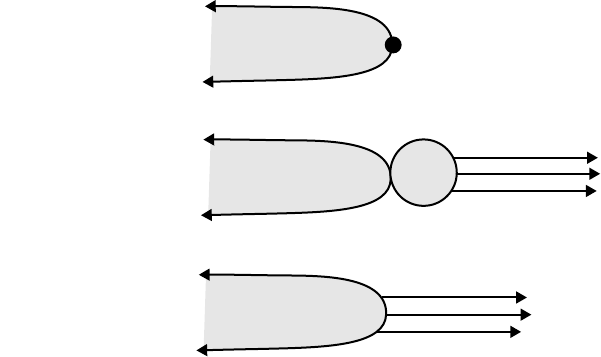}}%
    \put(0.66829334,0.29842701){\color[rgb]{0,0,0}\makebox(0,0)[lt]{\lineheight{1.25}\smash{\begin{tabular}[t]{l}E=0\end{tabular}}}}%
    \put(0.25865806,0.51984837){\color[rgb]{0,0,0}\makebox(0,0)[lt]{\lineheight{1.25}\smash{\begin{tabular}[t]{l}$u$\end{tabular}}}}%
    \put(0.22123389,0.06173514){\color[rgb]{0,0,0}\makebox(0,0)[lt]{\lineheight{1.25}\smash{\begin{tabular}[t]{l}$u_d$\end{tabular}}}}%
    \put(0.22000018,0.28069561){\color[rgb]{0,0,0}\makebox(0,0)[lt]{\lineheight{1.25}\smash{\begin{tabular}[t]{l}$u_d^{\on{sing}}$\end{tabular}}}}%
  \end{picture}%
\endgroup%

     \caption{Adding multiple trajectories at an intersection point}
     \label{adding}
 \end{figure}

The ``canonical" way of constructing perturbations, as in the divisor equation in the theory of Fukaya algebras, is to construct the perturbations in a symmetric way so that the contribution of such configurations is an inverse factorial.  The simplest case of the Theorem
which we wish to prove is the following:

\begin{theorem} \label{single} Suppose $(X,L)$ is a tame cobordism pair, 
$u: C \to X$ is a rigid holomorphic treed disk bounding $L$ with weight $\wt(u)$, one incoming edge labelled $\aa$ and no outgoing edges
as above.  There exist coherent perturbations so that, if  $\Sigma$
intersects $\partial u$ once positively, then the coefficient of  $\cc^d$ in $\delta(\aa)$ is $1/d!$.
\end{theorem}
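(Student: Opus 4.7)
The approach is to enumerate the rigid treed configurations that contribute to the coefficient of $\cc^d$ in $\delta(\aa)$ and to extract the factor $1/d!$ from the automorphism group $S_d$ of the ghost disk that carries the $d$ outgoing edges with identical labels $\cc$.

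First, I would enumerate the relevant configurations. Since $u$ has only the incoming puncture $\aa$ and no outgoing punctures, a stable configuration contributing to the coefficient of $\cc^d$ in $\delta(\aa)$ consists of $u$ together with $d$ outgoing Morse trajectories limiting to $\cc$, attached at boundary points of $u$ lying in the unstable manifold $\Sigma$ of $\cc$. Since $\Sigma \cap \partial u$ is a single positive transverse intersection $\{x\}$, all $d$ Morse trajectories emanate from $x$. For $d = 1$ the resulting treed disk is stable and regular without further modification, contributing the claimed $1 = 1/1!$. For $d \geq 2$, stability forces the insertion of a ghost (constant) disk $v_0$ at $x$ carrying $d+1$ boundary marked points: one attaching to $u$ and $d$ anchoring the outgoing edges.

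Next, exploit the symmetry. The ghost disk $v_0$ admits a natural $S_d$-action permuting the $d$ marked points that anchor the outgoing edges, and because every outgoing edge is labelled by the same generator $\cc$, this action extends to automorphisms of the labelled treed configuration. I would choose coherent multivalued perturbations $P_\Gamma$ in the sense of \eqref{wu} and \cite{BCSW1} on the combinatorial type $\Gamma$ of this configuration that are $S_d$-equivariant. With such a choice the perturbed solution is rigid as an $S_d$-orbit, and by the orbifold-counting convention built into the multivalued perturbation framework, with branch coefficients $c_i$ summing to $1/|S_d|$ across the orbit, the single unordered configuration contributes $1/d!$ to the coefficient of $\cc^d \in \cW(\Lambda)$, matching the claim.

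The main obstacle is constructing $S_d$-equivariant coherent perturbations that are simultaneously regular. For $d \geq 3$, the unperturbed moduli of ghost disks with $d+1$ boundary marked points is $(d-2)$-dimensional, so a nontrivial perturbation is required, and it must both respect the $S_d$-action and be transverse. The strategy is to use the Donaldson stabilizing divisor setup of \cite{BCSW1} to impose an $S_d$-symmetric family of marked-point intersection constraints on $v_0$, leaving sufficient $S_d$-equivariant freedom in the perturbation of the almost complex structure and of the Morse data on the outgoing edges to achieve transversality within the subspace of symmetric perturbations. An inductive extension across the stratification of the Gromov compactification, following the coherent perturbation scheme of \cite[Section \ref{I-foundsec}]{BCSW1}, then produces the required data without breaking the symmetry, and the counting argument above yields the factor $1/d!$.
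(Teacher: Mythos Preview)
There is a genuine gap. The paper (proving this as a special case of Theorem~\ref{repeating}) does \emph{not} count the ghost-disk configuration you describe. Instead it perturbs the Morse function independently on each of the $d$ outgoing semi-infinite edges, so that the $d$ perturbed stable manifolds meet $\partial u$ at $d$ \emph{distinct} points $z_1,\ldots,z_d$ near $x$; the resulting treed disk ($u$ with $d$ edges attached at these separated points) is then regular with no ghost component at all. The multivalued perturbation is the $S_d$-average of such a generic, symmetry-breaking choice: for each $g\in S_d$ one has a branch $gP_\Gamma$ with coefficient $1/d!$. Exactly one branch has its attachment points $z_{g(1)},\ldots,z_{g(d)}$ in the required cyclic order on $\partial u$, so exactly one branch contributes a rigid solution, and this is where the coefficient $1/d!$ comes from.

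Your ghost-disk mechanism fails on two counts. First, for $d\ge 3$ the constant disk with $d+1$ boundary markings has $(d-2)$-dimensional domain moduli, so the configuration is not rigid; Donaldson stabilizing divisors cannot cure this, since a constant disk meets no generic divisor and in any case those constraints live at interior markings, not boundary leaves. Second, the $S_d$ automorphism you invoke does not exist: for three or more boundary markings the marked disk has trivial automorphism group, and relabelling the outgoing leaves does not give a self-map of the cyclically ordered domain moduli $\M_\Gamma$---it sends a cyclically ordered tuple to one that is generally not cyclically ordered. Relatedly, ``branch coefficients $c_i$ summing to $1/|S_d|$'' misreads the multivalued formalism of \eqref{wu}: the $c_i$ sum to $1$, and the $1/d!$ in the paper arises because only one of the $d!$ branches carries a solution, not from an isotropy weight on a single orbifold point.
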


More generally we consider the following situation where various intersections with 
codimension one cycles are repeated  Let 
\[ \Sigma_1^s,\ldots, \Sigma_l^s \subset \Lambda \] 
be the stable manifolds of the index one critical points
$x_1,\ldots, x_l$ corresponding to  one generators 
\[ \cc_1,\ldots, \cc_l \in \cI(\Lambda) .\] 
Suppose $u: C \to X$ is a reduced configuration meeting these codimension one manifolds
transversally in cyclic order $\Sigma_{i_1}^s,\ldots, \Sigma_{i_r}^s$  around the boundary. 
Let
\[ d = (d_1,\ldots, d_r) \in \Z^r \] 
 a sequence of integers
  representing the number of repetitions of the labels $\cc_{i_1},\ldots, 
  \cc_{i_r}$ corresponding to the intersection points.   Suppose $u'$ has $d_j$ leaves labelled $x_j$ nearby 
these intersection points for $j = 1,\ldots, r$.
The more general version of Theorem \ref{single}
which allows for several intersections is the following:

 \begin{theorem}\label{repeating} \label{expprop} 
 Let $(X,L)$ be a tame cobordism pair. 
 Suppose that for vanishing perturbations, every reduced
configuration is regular and meets the codimension one stable manifolds $\Sigma_{x_1}^s,\ldots, \Sigma_{x_l}^s$ corresponding to degree zero generators 
$x_1,\ldots, x_l$ of 
$\cI(\Lambda_+)$ transversally.   There exist regular perturbations  $P_{\Gamma}$   satisfying the conditions in \cite[Theorem \ref{I-twolevels}]{BCSW1}
(coherent, stabilizing, and regular) for which for all collections
  $\ul{\gamma}_+$ of degree one there exists a forgetful map 
\[ \bigcup_{\Gamma \to \Gamma_0}  \M_\Gamma(L,{\gamma}_-,\ul{\gamma}_+) \to \M^{\red}_{\Gamma_\circ}(L,{\gamma}_-), \quad u \mapsto u_\circ \]
for the corresponding type $\Gamma_\circ$
with no outputs which has weighted count over each fiber given by 
\begin{equation} \label{wtud}
\wt(u_{(d)}) = \prod_{j=1}^r  (\pm 1)^{d_j} (d_j!)^{-1} \wt(u) \end{equation} 
  with sign $+1$ resp. $-1$ if $\partial u$ intersects $\Sigma_{x_j}^s$
  positively resp. negatively. 
  \end{theorem}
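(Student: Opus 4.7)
The plan is to construct perturbations inductively on the type $\Gamma$ so that moduli spaces with repeated outputs at the same intersection point carry a natural symmetric group action, and then identify the weighted count with the reciprocal factorial via this symmetry. It suffices to treat a single intersection point $j$, since the perturbation at distinct intersection points along $\partial u$ can be made independent, so the contributions at different intersection points multiply and yield the product in \eqref{wtud}.

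First, I would analyze the combinatorial structure. Let $\Gamma$ be the type of $u$ with no outgoing leaves and $\Gamma_{(d)}$ the type obtained by attaching a ghost disk $S_{v}$ at the point on $\partial C$ mapping to $\Sigma_x^{+}$, carrying $d$ outgoing leaves all labelled by the critical point $\cc$. The naive expected dimension of $\M_{\Gamma_{(d)}}$ exceeds that of $\M_\Gamma$ by $d-1$, matching the $d$ extra leaf length parameters minus the one cross-ratio constraint (or rather, recorded by the tree combinatorics): the ghost disk carries $d+1$ boundary edges, and after quotienting by $\Aut(S_v)$ and requiring all $d$ leaves to land at $\cc$, one gets that $\M_{\Gamma_{(d)}}$ is rigid exactly when $u$ is rigid. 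The symmetric group $S_d$ acts on $\M_{\Gamma_{(d)}}$ by permuting the labels of the $d$ outgoing leaves, and this action is free on the locus of distinct edge lengths.

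Next, I would refine the coherent perturbation system from \cite[Theorem \ref{I-twolevels}]{BCSW1} to one that is $S_d$-equivariant at type $\Gamma_{(d)}$. Concretely, given a perturbation $P_\Gamma$ already chosen for the reduced type $\Gamma_\circ$, extend it to $P_{\Gamma_{(d)}}$ by choosing the domain-dependence at the ghost disk and at the attached tree segments to be invariant under the $S_d$ relabelling; equivariance is compatible with the recursive boundary matching because boundary strata of $\M_{\Gamma_{(d)}}$ correspond either to a leaf length going to $\infty$ (which breaks one $\cc$-trajectory and reduces $d$ by one) or to a cross-ratio collapsing two leaves on the ghost disk (which lies in the fixed locus of a transposition, hence is of positive codimension and does not contribute to rigid count). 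Regularity of the $S_d$-equivariant perturbations follows from the usual Sard-Smale argument applied to the $S_d$-invariant space of perturbations, since the $S_d$-action is free on the open stratum. The forgetful map $u \mapsto u_\circ$ is then well-defined and $S_d$-equivariant, with fiber of cardinality $d!$ on the open stratum.

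Finally, I would extract the inverse factorial from the weight formula \eqref{wu}. By construction each of the $d!$ configurations in the fiber has the same multivalued perturbation coefficient $c_i$, the same intersection-theoretic count $d(\white)^{-1}$, and the same signed orientation (the sign being $(\pm 1)^d$ depending on whether the intersection of $\partial u$ with $\Sigma_x^+$ is positive or negative, since each leaf contributes one factor of the local intersection sign by the orientation conventions of \cite[Section \ref{I-foundsec}]{BCSW1}). The factor $(d!)^{-1}$ is built into $c_i$ by choosing the multivalued perturbation at type $\Gamma_{(d)}$ with weight $(d!)^{-1}$ on each branch; equivalently, the $S_d$-quotient is taken in the enumerative count. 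Summing over the $d!$ branches of the multivalued perturbation then produces \eqref{wtud}.

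The main obstacle is verifying compatibility of the $S_d$-equivariant choice at type $\Gamma_{(d)}$ with the global coherence constraints on all moduli spaces simultaneously, in particular at boundary strata where one of the $\cc$-leaves breaks off to form a two-level configuration: one must check that the induced perturbation on the broken configuration agrees with the one previously chosen for the lower-dimensional stratum. This is handled by the usual inductive construction on complexity of the domain type, where at each step the space of equivariant perturbation extensions is non-empty and contractible, combined with the fact that the broken configurations carry compatible residual $S_{d-1}$-symmetries.
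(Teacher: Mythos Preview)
Your general strategy of using an $S_d$-symmetrized multivalued perturbation matches the paper's, but the mechanism by which the factor $(d_j!)^{-1}$ appears is not the one you describe, and your counting is off by exactly a factor of $d_j!$.

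The key point you miss is the \emph{cyclic ordering constraint} on the boundary of a treed disk. After perturbing the Morse functions on the $d_j$ leaves independently (so that leaf $i$ flows under a perturbed function $f_i$ with stable manifold $\Sigma_i$ close to $\Sigma_{x_j}^+$), the boundary $\partial u$ meets these at $d_j$ distinct points $z_1,\ldots,z_{d_j}\in\partial S$. For a valid treed disk with fixed output word $\ul{\gamma}_+=(\cc,\ldots,\cc)$, the leaves must attach at these points \emph{in the cyclic order around $\partial S$ determined by the leaf ordering}. Now form the multivalued perturbation by averaging this generic choice over the $S_{d_j}$-action permuting which perturbed Morse function is assigned to which leaf: for the branch $gP$, leaf $i$ attaches at $z_{g(i)}$, and the cyclic-order condition holds for exactly one permutation $g$. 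Hence the fiber of the forgetful map contains \emph{one} configuration, supported on \emph{one} branch of weight $(d_j!)^{-1}$, giving weighted count $(d_j!)^{-1}$.

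By contrast, your claim that the fiber has cardinality $d_j!$ with each element weighted $(d_j!)^{-1}$ yields $1$, not $(d_j!)^{-1}$. The ``$S_d$-quotient'' and the ``multivalued weight $(d!)^{-1}$ on each branch'' are the same division, so invoking both double-counts. Relatedly, the regular configurations do not lie in the ghost-disk stratum $\Gamma_{(d)}$ you analyze: after perturbation the $d_j$ leaves attach directly at distinct boundary points on the main component and no ghost disk remains, so the relevant domain type is the one with $d_j$ separate boundary leaves, not a single ghost vertex.

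There is also a secondary cancellation you do not address. When the reduced configuration $u_\circ$ has tree segments $T_\circ$, leaves can attach at intersections of $u_\circ(T_\circ)$ with $\Sigma^+$ via a constant trivalent disk $S_v$. Such configurations come in pairs related by swapping the two incoming edges at $S_v$ and cancel; without this observation the fiber count over $u_\circ$ is not the clean $(d_j!)^{-1}$.
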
  

\begin{proof}  Consider the following averaging procedure.  Let $\ul{P} = (P_\Gamma)$ be a generic perturbation  obtained by perturbing the Morse functions on the edges $T_{e_j}$ that are semi-infinite.  
The stable manifold of $x_j$  intersects each boundary $u| \partial C$ at a collection of distinct points 
\[ z_1,\ldots, z_{d_j} \in \partial C  \] 
which may not be in cyclic order around the boundary.  From such a perturbation and a choice of permutation $g \in \Sigma_{d_j}$ of $d_j$ letters,
one obtains a new perturbation $g P_\Gamma$ by pulling back $P_\Gamma$. 

For a perturbation system given by such an averaging procedure, the weight of each configuration is given by an inverse factorial. 
Indeed, the intersection points $z_1,\ldots, z_{{d_k}} \in \partial S$
will lie in cyclic order around the boundary for exactly one of these perturbations, hence
the factorial  $(d_j!)^{-1}$ in the statement of the Proposition.  The determinant lines are isomorphic except from the sign
$(\pm 1)^{d_j}$
arising from the intersection of $\partial u$ with the oriented
stable manifold for $x_j$.  The sign is obtained by definition 
of the orientation as induced from enforcing the matching condition 
at the intersection of $T_{e_j}$ with the surface part $S$; 
since the matching condition is cut out by the diagonal, the sign 
is positive exactly if the intersection is positive. 

  The fibers of the forgetful map 
consist of ways of ordering the intersection points with the hypersurfaces. 
For each intersection $z$ of $u_\circ(\partial S_\circ)$ with $\Sigma_{x_j}^s$,
there may be  a collection of boundary leaves mapping to perturbations of $\Sigma_{x_j}^s$;
or similarly for the intersections of $u_\circ(T_\circ)$ as in Figure \ref{forgetful}.
\begin{figure}[ht]
    \centering
    \scalebox{.5}{
\begingroup%
  \makeatletter%
  \providecommand\color[2][]{%
    \errmessage{(Inkscape) Color is used for the text in Inkscape, but the package 'color.sty' is not loaded}%
    \renewcommand\color[2][]{}%
  }%
  \providecommand\transparent[1]{%
    \errmessage{(Inkscape) Transparency is used (non-zero) for the text in Inkscape, but the package 'transparent.sty' is not loaded}%
    \renewcommand\transparent[1]{}%
  }%
  \providecommand\rotatebox[2]{#2}%
  \newcommand*\fsize{\dimexpr\f@size pt\relax}%
  \newcommand*\lineheight[1]{\fontsize{\fsize}{#1\fsize}\selectfont}%
  \ifx\svgwidth\undefined%
    \setlength{\unitlength}{666.38308548bp}%
    \ifx\svgscale\undefined%
      \relax%
    \else%
      \setlength{\unitlength}{\unitlength * \real{\svgscale}}%
    \fi%
  \else%
    \setlength{\unitlength}{\svgwidth}%
  \fi%
  \global\let\svgwidth\undefined%
  \global\let\svgscale\undefined%
  \makeatother%
  \begin{picture}(1,0.82405031)%
    \lineheight{1}%
    \setlength\tabcolsep{0pt}%
    \put(0,0){\includegraphics[width=\unitlength,page=1]{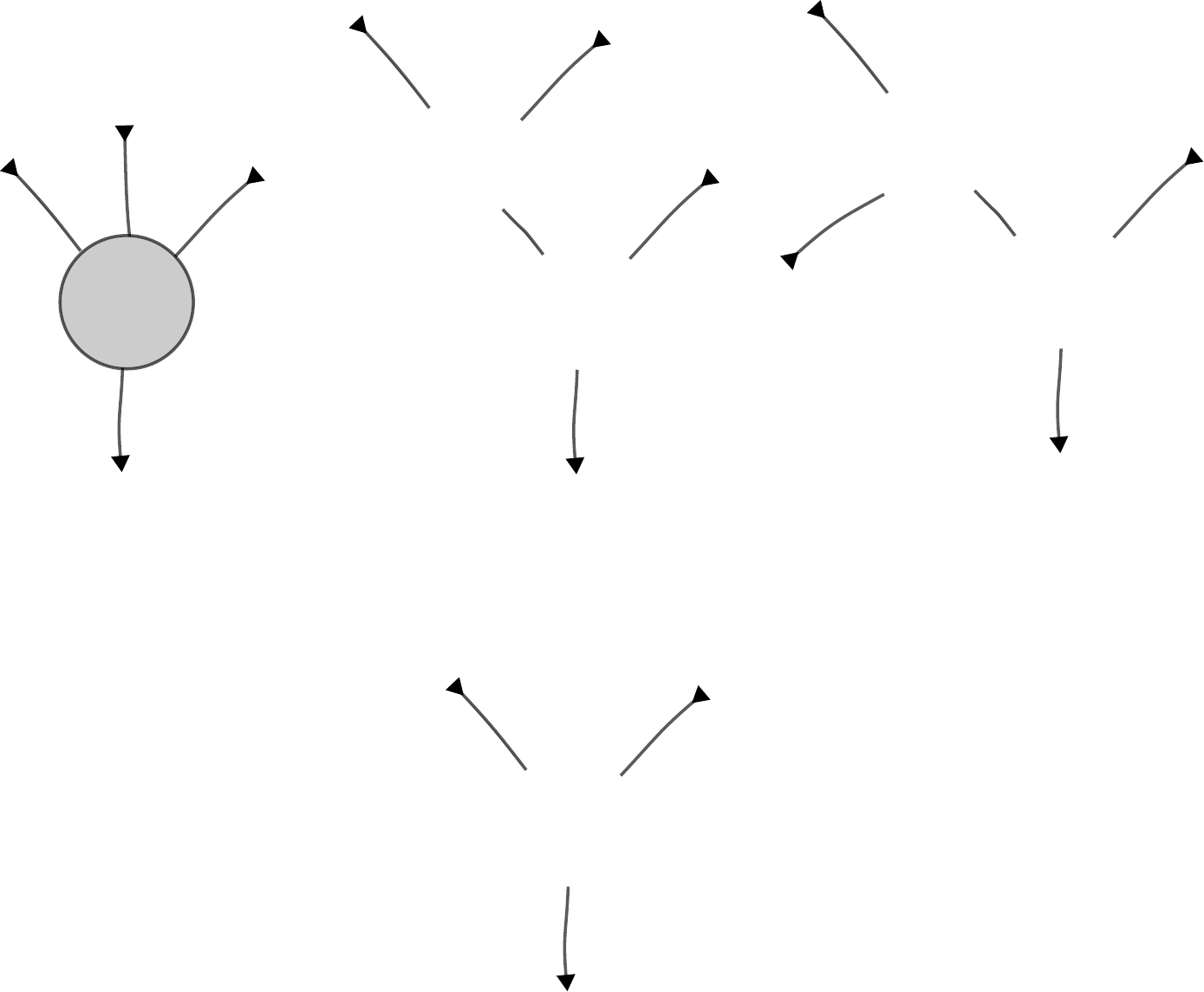}}%
    \put(0.09423893,0.72855492){\color[rgb]{0,0,0}\transparent{0.64502198}\makebox(0,0)[lt]{\lineheight{1.25}\smash{\begin{tabular}[t]{l}x\end{tabular}}}}%
    \put(0.51125205,0.80400366){\color[rgb]{0,0,0}\transparent{0.64502198}\makebox(0,0)[lt]{\lineheight{1.25}\smash{\begin{tabular}[t]{l}x\end{tabular}}}}%
    \put(0.63313438,0.59039541){\color[rgb]{0,0,0}\transparent{0.64502198}\makebox(0,0)[lt]{\lineheight{1.25}\smash{\begin{tabular}[t]{l}x\end{tabular}}}}%
    \put(0,0){\includegraphics[width=\unitlength,page=2]{forgetful.pdf}}%
  \end{picture}%
\endgroup%
}
    \caption{Examples of the forgetful map}
    \label{forgetful}
\end{figure}

In the latter case, the forgetful map collapses a constant disk $S_v$
at the intersection point with $S_v$ adjacent to exactly three edges of $T$. 
Configurations of the latter type come in pairs, with two configurations  $u_+, u_-: C \to X$
related by switching the order of the incoming edges $T_e$ at $S_v$.
The intersection number of the corresponding unstable manifold $\Sigma^s_{x_j}$
with the edge $u(T_e)$ of these configurations is opposite.  These configurations  cancel. 
\end{proof}

\begin{example} \label{cliffleg6}
  We partially compute the differential of the Clifford Legendrian $\Lambda_{\Cliff}$. 
  The projections of the disks $u_1,\ldots, u_n$
  contributing to the differential are of the form \eqref{filldisks} and
  permutations.  The disk potential is 
  \[ W(\ti{y}_1,\ldots, \ti{y}_n) = \ti{y}_1 + \ldots + \ti{y}_n\]
  subject to the relation $\ti{y}_1 \ldots \ti{y}_n = 1$ in coordinates 
  $y_1,\ldots, y_n$ on $\Rep(\Pi)$.
    The tori $\Pi,\Lambda$ may be identified with the lattice quotients
\[ \Pi = \Delta/ \left( \Delta \cap \{ (\lambda_1,\ldots, \lambda_n) \in (\Z/n)^n, 
\lambda_i - \lambda_j \in \Z \ \forall i,j \} \right) \] 
where $\Delta$ is the subgroup given by 
\[ \Delta = \{ (\lambda_1,\ldots, \lambda_n) \in \R^n, 
\lambda_1 + \ldots + \lambda_n  = 0 \} \] 
and 
\[ \Lambda = \Delta/ \Delta_0 , \quad \Delta_0 := \{ \lambda \in \Z^n, \lambda_1 + \ldots + \lambda_n = 0 \} .\]
  The boundaries of the disks $u_1,\ldots, u_n$ give paths of the form 
\[ \partial u_1,\ldots \partial u_n: [0,1] \to \Lambda, \quad \begin{array}{l} t \mapsto t{ ((n-1)/n, -1/n,\ldots, -1/n) } \\ \ldots \\
t \mapsto t{(-1/n,\ldots, -1/n,(n-1)/n)}  \end{array}
\ \text{mod} \ \Delta_0 .\]
Let the capping path $\hat{\gamma}$ be the reverse of the first path $\partial u_1$ above.  The capped-off 
boundaries $ \partial u_1^{-1} \# \partial u_i$ are then homotopic to the loops given by 
\[  \partial u_1^{-1} \# \partial u_i: \ S^1 \to \Lambda , \quad t \mapsto 1 + t{ (-1,1,0,\ldots,0)} + \ldots + t{(-1,0,\ldots,1)}  .\]
The non-trivial paths in this expression give a basis $\mu_1,\ldots, \mu_{n-1}$ for the homology $H_1(\Lambda)$.  
Let 
\[ \aa \in \RR(\Lambda) , \quad \deg_\R(\aa)= 2n \cdot\frac{1}{n} -1 =   1 \]%
denote the Reeb chord of length $1/n$  that is an index-zero  critical point of $f_{\white}$.   Let 
\[ \cc_i \in CE(\Lambda) , \quad \deg_\R(\cc_i) = 0 \]
denote the critical point of $f_\circ$ representing the Morse cycle
corresponding to $\mu_i$, for $i = 1,\ldots, n-1$.  Denote the stable and unstable manifolds 
\[ \Sigma_i^s , \Sigma_i^u \subset \Lambda, \quad \dim(\Sigma_i^s) = \dim(\Lambda) - 1, \quad 
\dim(\Sigma_i^u) = 1 \]
using the standard Morse function and metric. Each $\Sigma_i^s$ is the product
of factors in $\Lambda$ except the $i$-th factor. The $i$-th loop in $\Lambda$ has intersection number with the codimension one submanifold
$\Sigma_i^s$ equal to one, and none of  the other submanifolds $W_j^s, j \neq i$.  As in Theorem \ref{diffone} below, the repeating inputs Theorem
\ref{repeating} gives 
  \begin{equation} \label{deltaaa2}
  \delta(\aa) = 
  \left( \pm 1 + \sum_{i=1}^n \pm y_i, \right) + \ldots \in CE(\Lambda),
  \quad y_i   = {[\mu_i]} \exp(\cc_i) 
\end{equation}
where the additional 
contributions $\ldots$ arise from configurations lifting some  disks
in the base that are not Maslov index two; these could be constant disks lifting to 
punctured disks in a fiber, or non-constant disks of higher Maslov index with higher-codimension constraints at the leaves.  See Dimitriglou-Rizell-Golovko \cite[Section 6]{dr:bs} for computations using Legendrian isotopy 
and Morse flow trees.  We do not discuss here any computation of linearized or bilinearized contact homology groups; it's natural to wonder whether there is a relationship between the contact homology groups and the homology of the fillings as in Ekholm \cite{ekholm:rational}.
\end{example}

\label{inv}  We also wish to arrange that many of the zero-area terms in the differential may be arranged to vanish after passing to the abelianization.    Given a treed disk $C$ of some type $\Gamma$ containing an surface component $S_v$ and 
incoming edges $T_{e_1}, \ldots, T_{e_k}$ incident to $S_v$, consider the homeomorphism
from $T$ to itself given by re-ordering 
$T_{e_1}, \ldots, T_{e_d} $.  This operation induces an isomorphism of the one-dimensional part of the universal curve.
\[ \sigma_{\Gamma}: \ {\cT}_\Gamma \to {\cT}_{\Gamma} \]
defined in \cite[Section 4.3]{BCSW1}.
Let $\sigma_{\Gamma}$ be the identity 
on $\cS_\Gamma$.

 \begin{definition} A multivalued perturbation system $\ul{P} = (P_\Gamma)$ will be called {\em invariant}
if for each pair of types $\Gamma',\Gamma$ as above,  $P_\Gamma$ is pulled back from $P_{\Gamma'}$ under $\sigma_{\Gamma,\Gamma'}$.
\end{definition}

\begin{lemma}  The conclusion of \cite[Theorem \ref{I-twolevels}]{BCSW1} holds, that is, coherent, regular perturbations exist with the added condition that the perturbations are invariant.   
\end{lemma}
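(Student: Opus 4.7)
The plan is to adapt the inductive construction of coherent, regular perturbations from \cite[Theorem \ref{I-twolevels}]{BCSW1} so that the chosen perturbation on each type $\Gamma$ only depends on the equivalence class of $\Gamma$ under the groupoid generated by the reordering isomorphisms $\sigma_{\Gamma,\Gamma'}$. In other words, I would pick a representative from each orbit under reordering and build the perturbation on the representative, then transport by $\sigma$ to the remaining types in the orbit.

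First I would observe that the isomorphisms $\sigma_{\Gamma,\Gamma'}:\ol{\cU}_\Gamma\to\ol{\cU}_{\Gamma'}$ are isomorphisms of universal curves covering isomorphisms of moduli of domains, and they carry the boundary and node structure of $\ol{\cU}_\Gamma$ to that of $\ol{\cU}_{\Gamma'}$. Consequently, if $\Gamma_b$ is a codimension-one boundary stratum of $\Gamma$ and $\Gamma_b'=\sigma(\Gamma_b)$ the corresponding stratum of $\Gamma'$, the restriction map for $P_\Gamma$ along $\Gamma_b$ matches the restriction of $\sigma^*P_{\Gamma'}$ along $\Gamma_b'$. This shows that enforcing invariance is compatible with the coherence conditions under concatenation and gluing in \cite[Theorem \ref{I-twolevels}]{BCSW1}: coherence is a condition on boundary restrictions, and $\sigma$ respects those restrictions.

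Next I would run the induction as in the original proof, but indexed by equivalence classes of types under $\sigma$. Order the types by their combinatorial complexity (number of edges, surface components, incoming Reeb chords) and by energy, exactly as in \cite[Theorem \ref{I-twolevels}]{BCSW1}. For each new orbit $[\Gamma]$, one picks any representative $\Gamma$; the coherence condition already determines $P_\Gamma$ on a neighborhood of the boundary strata from the perturbations chosen on lower types, and this boundary datum is automatically $\sigma$-invariant by the inductive hypothesis (since the lower types from which the boundary datum is transported are themselves in a single orbit with their $\sigma$-images). Then choose a generic extension to the interior and define $P_{\Gamma'}:=(\sigma_{\Gamma,\Gamma'})^*P_\Gamma$ for every other $\Gamma'\in[\Gamma]$.

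The main point to verify, and the main potential obstacle, is that one can still achieve transversality while restricting to the invariant subspace of perturbations. The key observation is that for a reduced (non-constant) holomorphic treed disk $u:C\to X$, the automorphism group is trivial, so the action of permutations of incoming edges at a given vertex $v$ moves $u$ to a \emph{different} treed disk $\sigma\cdot u$ of type $\Gamma'\neq\Gamma$ (or to a genuinely different point in $\M_\Gamma$ when $\Gamma=\Gamma'$). Hence transversality need only be arranged along a single representative of each $\sigma$-orbit of holomorphic maps, and the invariant perturbations surject onto the full space of perturbations at that representative. The standard Sard--Smale argument then shows that regular invariant perturbations form a comeager set in the space of invariant perturbations extending the fixed boundary datum, which completes the inductive step and hence the lemma.
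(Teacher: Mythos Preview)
Your proposal is correct and takes essentially the same approach as the paper: the paper's proof reads in its entirety ``The proof is a repeat of the previous arguments with the symmetry condition imposed, and omitted,'' and what you have written is precisely a fleshing-out of that sentence---inducting over orbits of types under the reordering groupoid, transporting the chosen perturbation on a representative via $\sigma_{\Gamma,\Gamma'}$, and observing that coherence and regularity survive. You have supplied considerably more detail than the paper itself.
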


The proof is a repeat of the previous arguments with the symmetry condition imposed, and omitted.

We introduce notation for the coefficients of the differential. In \cite[Section 3.3]{BCSW1} we defined domain type $\Gamma$ and map type $\bGamma$ for treed maps. 
 Let 
$\delta_\bGamma(\gamma)$ is the contribution to $\delta$ from maps with domain type $\Gamma$ so that 
\[ \delta(\gamma) = \sum_\bGamma \delta_{\bGamma}(\gamma) .\] 
For $\gamma_1,\ldots, \gamma_d$ 
let 
\[ \delta_d(\gamma_0;\gamma_1,\ldots, \gamma_d) \in \mhat{G}(\Lambda) \]
denote the coefficient of $\gamma_1 \ldots \gamma_d$ in $\delta_d(\gamma_0).$ For any permutation $\sigma$
let  
\[ (-1)^{\deg_{\Z_2} (\sigma,\gamma_1,\ldots,\gamma_d)} \in \{ +1, -1 \} \] 
denote the Koszul sign of the expression 
$\gamma_{\sigma(1)} \ldots \gamma_{\sigma(d)}$ with respect to $\gamma_1 \ldots \gamma_d,$
that is the product of the signs $ (-1)^{ \deg_{\Z_2}(\gamma_i) \deg_{\Z_2} (\gamma_{i+1})} $
in a decomposition of $\sigma$ into transpositions. 

\begin{definition}  The differential $\delta$ is {\em classically commutative} for any $\gamma$ a degree two generator in the classical sector $CE_\black(\Lambda)$,  For any type $\Gamma $ any automorphism $\sigma \in \Aut(\Gamma)$ and any classical generators $\gamma_1,\ldots, \gamma_d$
\[  \delta_\bGamma(\gamma;\gamma_{\sigma(1)}, \ldots, \gamma_{\sigma(d)})  =(-1)^{\deg_{\Z_2} (\sigma,\gamma_1,\ldots,\gamma_d)} 
\delta_\bGamma(\gamma;\gamma_1,\ldots,\gamma_d) .\]
This ends the Definition.
\end{definition}

\begin{proposition}  \label{abelian} Let  $(Z,\Lambda)$ be a tame pair, $\ul{P} = (P_\Gamma)$ a coherent, regular system of 
invariant perturbations.  Then $\delta$ is classically commutative.
\end{proposition}

\begin{proof} Given a type $\Gamma$,  any tree automorphism $\sigma \in \Aut(\Gamma)$ induces an automorphism 
\[ \cT_\Gamma(\sigma): 
\ol{\cT}_\Gamma \to \ol{\cT}_\Gamma \]
of the universal tree $\ol{\cT}_\Gamma \to \ol{\cM}_\Gamma$.   By assumption
$\sigma$ then induces a bijection between zero-area moduli spaces\footnote{ We remark that  this action cannot in general extend to the positive-area moduli spaces, because in that case
the matching conditions at the intersection of the tree and surface parts is not preserved.  Such an action, 
if it existed, would imply that Lagrangian Floer homology is always graded commutative.  However, by mirror symmetry it is expected that there are Lagrangians corresponding to higher rank bundles, and these typically have
non-commutative endomorphism algebras.}
\[  \M(\Lambda, \gamma_0, \gamma_{\sigma(1)},\ldots, 
\gamma_{\sigma(d)} ) \to 
\M(\Lambda, \gamma_0, \gamma_1,\ldots, \gamma_d ) .\]
For $\Gamma$ so that each vertex has valence three, the induced action of $\sigma_{i(i+1)}$ 
on $\M_\Gamma$ is trivial since it preserves the edge lengths.
The orientation $\M_\Gamma(\Lambda)$ permutes the determinant lines associated
to the leaves and root edge, and so changes the
orientation by the claimed Koszul sign.
\end{proof}

\begin{definition} \label{def:ab} Denote by $S_\ell$ the symmetric group on $\ell$ letters, and 
\[ CE^{\ab}(\Lambda)   = \bigoplus_{\ell \ge 0 } CE_{\ell } (\Lambda)/ S_\ell \] 
the space obtained by identifying words equal up to ordering with Koszul sign.  The differential 
$\delta$ induces an {\em abelianized differential} denoted 
 \[ \delta^{\ab}: CE^{\ab}(\Lambda) \to CE^{\ab}(\Lambda) .\] 
 Let 
\[ \delta^{\ab,0}:  CE^{\ab}_0(\Lambda) \to CE^{\ab}_0(\Lambda) \]
denote
the composition of $\delta^{\ab}$ with the projection on the space of words
in the degree zero generators (that is, projecting out words whose total degree is
zero but whose individual letters may have non-zero degree.)  
\end{definition}

\begin{lemma} \label{lem:noconst} Let $(Z,\Lambda)$ be a tame pair.  For invariant perturbations $\ul{P}$, configurations $u: C \to X$  with a leaf $T_{e_i}$ labelled by a degree one generator $\cc_i$ attached to a constant disk $S_v$ do not contribute to the abelianized differential $\delta^{\ab}: CE^{\ab}(\Lambda) \to CE^{\ab}(\Lambda).$
\end{lemma}

\begin{proof}  Any such configuration produces another configuration $u': C' \to X$ of type $\Gamma'$, with opposite sign, by changing the order of leaves at the $S_v$.  Such contributions cancel by the invariance assumption.
\end{proof}

\begin{corollary} \label{classicalbound}  For $\gamma \in \cI_\black(\Lambda)$ of degree two, the projection of $\delta(\gamma)$ on 
$CE^{\ab}(\Lambda)$ (identifying words equal up to permutation) is the classical boundary 
$\delta_{\on{Morse}}(\gamma) = \delta(\gamma)$ plus terms involving words with at least one 
Morse-degree-zero generator.
\end{corollary}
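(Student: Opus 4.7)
The proof will combine Lemma \ref{preserves} with Proposition \ref{abelian}. By Lemma \ref{preserves}, for any $\gamma \in \cI_\black(\Lambda)$ the differential satisfies
\[
\delta(\gamma) = \delta_{\on{Morse}}(\gamma) + R(\gamma),
\]
where $R(\gamma) \in CE_\black(\Lambda)$ is a sum of words of length at least two in classical generators, and the configurations contributing to $R(\gamma)$ are Morse trees whose interior vertices are trivalent constant disks joined by Morse trajectories. The task therefore reduces to showing that every term of $R(\gamma)$ in which no letter is a Morse-degree-zero generator projects to zero in $CE^{\ab}(\Lambda)$.

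Fix such a word $w = \gamma_1 \cdots \gamma_d$ with $d \geq 2$ and each $\gamma_i$ of Morse index at least one, arising from a tree type $\Gamma$. A real-degree count using the grading of Definition \ref{gradings}(b), together with the fact that $\delta$ has real degree $-1$ and $\gamma$ has real degree two, constrains the admissible Morse indices of the letters. In particular, the associated $\Z_2$-degrees are forced into a parity pattern for which every adjacent Koszul sign $(-1)^{\deg_{\Z_2}(\gamma_i)\deg_{\Z_2}(\gamma_{i+1})}$ equals $-1$.

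Because the interior disks are trivalent, an adjacent transposition $\sigma_{i,i+1}$ of two outgoing leaves meeting at a common interior disk extends to an automorphism of $\Gamma$ preserving edge lengths, as already observed in the proof of Proposition \ref{abelian}. Proposition \ref{abelian} then gives
\[
\delta_\bGamma(\gamma; \gamma_1, \dots, \gamma_{i+1}, \gamma_i, \dots, \gamma_d) = -\, \delta_\bGamma(\gamma; \gamma_1, \dots, \gamma_d).
\]
In $CE^{\ab}(\Lambda)$ the word $w$ is identified with any reordering, so the coefficient of its abelian class is the sum of the structure coefficients over all orderings. Pairing each ordering with the one produced by a single adjacent swap at a trivalent interior vertex therefore yields complete cancellation, and the contribution of $w$ in $CE^{\ab}(\Lambda)$ is zero. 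Summing over all such words, only $\delta_{\on{Morse}}(\gamma)$ and words containing at least one Morse-degree-zero generator survive the projection.

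The main obstacle is to verify that for every unordered multiset $w$ of this form, the orbit structure of the adjacent-transposition action at trivalent vertices really does partition the orderings into cancelling pairs; this is where the trivalency of the constant disks in the Morse-tree description of $R(\gamma)$ is essential, as it guarantees that Proposition \ref{abelian} applies to enough automorphisms to exhaust the symmetric group action in the abelianization.
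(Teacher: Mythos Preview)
Your approach is exactly the paper's: reduce via Lemma \ref{preserves} to zero-area trees, then use Proposition \ref{abelian} to obtain a sign that kills contributions in the abelianization. However, the argument is incomplete at its most important step, and your closing paragraph locates the difficulty in the wrong place.

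The genuine gap is the sentence ``the associated $\Z_2$-degrees are forced into a parity pattern for which every adjacent Koszul sign equals $-1$.'' You assert this but never prove it, and it does not follow from a vague appeal to the grading. What is needed is the concrete dimension count the paper carries out: the moduli of trivalent metric trees with $d$ leaves and one root has dimension $d-2$, so rigidity forces the total Morse index of the outputs to equal $\on{ind}(\gamma) + d - 2$. With $\gamma$ of Morse index two this gives $\sum_i \on{ind}(\gamma_i) = d$. Under your hypothesis that every $\gamma_i$ has Morse index at least one, this forces \emph{every} $\gamma_i$ to have Morse index exactly one, hence odd $\Z_2$-degree, hence Koszul sign $-1$ for any transposition. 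Without this step the claimed parity pattern has no justification. (Note also that you write ``$\gamma$ has real degree two''; the count above only yields total Morse degree $d$ when $\gamma$ has \emph{Morse} degree two, so be careful which grading you mean.)

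By contrast, the issue you flag as ``the main obstacle'' --- whether the orderings partition into cancelling pairs --- is not an obstacle at all once the Koszul sign is known to be $-1$. For a fixed tree type $\Gamma$, pick one transposition $\sigma$ swapping two leaves at a common trivalent vertex; then $\tau \mapsto \tau\sigma$ is a fixed-point-free involution on the set of orderings, and Proposition \ref{abelian} gives $\delta_\bGamma(\gamma;\gamma_{\tau\sigma(1)},\dots) = -\,\delta_\bGamma(\gamma;\gamma_{\tau(1)},\dots)$ since all letters are odd. The sum over orderings vanishes immediately; there is no need to ``exhaust the symmetric group action.''
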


\begin{proof} Since the moduli space of trees with $d$ leaves and $1$ root has dimension $d-2$,
the constrained moduli space can be rigid only if the output 
is of the form
$\gamma_1 \ldots \gamma_d \in \cI(\Lambda)^d$ for some $\gamma_1,\ldots, \gamma_d$ of total Morse degree $d$. 
If all of $\gamma_1,\ldots,\gamma_d$ are not Morse degree $1$, then at least one has Morse degree zero, since 
the average Morse degree is one.
For each such output arising from a tree $\Gamma$, choose edges $e_i, e_{i+1}$
adjacent to the same vertex.  By the previous Proposition \ref{abelian}, the contribution of maps 
with domain type $\Gamma$ is skew-symmetric under the transposition $\sigma_i, \sigma_{i+1}$.  
So the contribution of such maps disappears after abelianization.
\end{proof}

\begin{example} \label{coveringex}
We continue the discussion of the differential for the Clifford Legendrian in Example \ref{cliffleg6}.
For degree reasons, the output of $\delta^{\ab,0}$ consists of a linear combination of 
words $\ul{\gamma}_-$ in the degree one generators $\cc_1,\ldots, \cc_{n-1}$.
The coefficients of these are determined by Theorem \ref{repeating} to be inverse factorials.
Hence
  \begin{equation} \label{deltacc}
  \delta^{\ab,0}(\aa) = 
  \left( \pm 1 + \sum_{i=1}^n \pm y_i, \quad y_i
  = {[\mu_i]} \exp(\cc_i) \right) \in CE^{\ab}(\Lambda)
\end{equation}
in coordinates $y_1,\ldots, y_n $ on $\Rep(\Lambda)$.

In the case of the Clifford Legendrian $\Lambda \cong T^2$
in $Z = S^5$, the disks in Example \ref{cliffleg4} lifting the Maslov
index two disks in $Y = \CP^2$ give rise to the leading order terms in the
differential as in 
\eqref{deltaaa2}
\[ \delta^{\ab,0}(\aa) =
 1 \pm y_1 \pm y_2    . \] 
On the other hand, one may also take $Z$ to be the unit anti-canonical bundle.    The projection from $\Lambda$ to $\Pi$ is an isomorphism obtains using the capping path  corresponding to the monomial $y_1y_2$ 
\[ \delta^{\ab,0}(\aa) = 1 \pm y_1^2 y_2 \pm y_1 y_2^2 + \ldots .\]
corresponding to the shifted disk potential for the Clifford torus.  This ends the Example. 
 \end{example}

 \label{toricex3} We compute the leading order term in the abelianized differential for connected horizontal lifts of monotone tori. 
Let $Z$ be a negative bundle over
  a monotone symplectic manifold $Y$  and $\Lambda$ a connected lift of a monotone Lagrangian torus  
  $\Pi$.   The natural map $\Lambda \to \Pi$ induces a map
  $\pi_1(\Lambda) \to \pi_1(\Pi)$.  By pull-back, the map on fundamental groups induces a map on representation varieties  
  \[\Rep(p): \Rep(\Pi) \to \Rep(\Lambda) .\]
If $W: \Rep(\Pi) \to \C$ is a function that 
is the pull-back of a function $W'$ on $\Rep(\Lambda)$
then we write
\[ W' = \Rep(p)_* W: \Rep(\Lambda) \to \C .\] 
Let $W_\Pi$ be the disk potential of $\Pi$ and choose $v\in H_1(\Pi)$ such that $x^v$ corresponds to a vertex in the Newton polytope of $W$.  
Choose a capping path to be equal to the lift of a path representing $v$, and consider the Laurent polynomial 
\[  x^{-v}  W_\Pi \in \C[\Rep(\Pi)] .\]
This choice of capping path ensures that the Newton polytope of $ W_\Pi$ has $0$ as a vertex. Since the Newton polytope is convex, we can choose a basis of $H_1(\Lambda)$ to ensure that  the Newton polytope of $W_\Pi$ lies in the positive cone generated by the basis, see \cite[Figure \ref{III-fig:basisforlam}]{BCSW3} for the case when $H_1$ is two-dimensional, and so in the corresponding coordinates
$ x^{-v} W_\Pi $ is a polynomial. 

\begin{lemma}  If $u_1,u_2$ are Maslov index two disks in $Y$ bounding $\Pi$, then 
  the boundary difference $[\partial u_1] - [\partial u_2] \in 
  \pi_1(\Pi)$ lies in the image of $\pi_1(\Lambda)$.
\end{lemma}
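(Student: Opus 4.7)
The plan is to realize the image of $p_* : \pi_1(\Lambda) \to \pi_1(\Pi)$ as the kernel of a holonomy homomorphism on $\pi_1(\Pi)$, and then observe that by monotonicity the two Maslov index two disks produce boundary classes with equal holonomy, so their difference is in the kernel.

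First, since $\Pi$ is Lagrangian in $(Y,\omega_Y)$ we have $\omega_Y|_\Pi = 0$, so the curvature $d\alpha = -p^*\omega_Y$ of the connection $\alpha$ on $Z$ vanishes on the restriction $Z|_\Pi \to \Pi$.  The horizontal distribution on $Z|_\Pi$ is therefore flat.  Fix a basepoint $\lambda_0 \in \Lambda$ above $\pi_0 \in \Pi$; the connected horizontal lift $\Lambda$ is by definition the leaf of this flat distribution through $\lambda_0$, so $p: \Lambda \to \Pi$ is a covering corresponding to the subgroup
\[
\ker\!\bigl(\hol : \pi_1(\Pi, \pi_0) \to S^1\bigr) \subset \pi_1(\Pi, \pi_0),
\]
since a loop $\gamma$ in $\Pi$ based at $\pi_0$ lifts to a loop in $\Lambda$ exactly when its horizontal lift through $\lambda_0$ closes up.

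Next, for any disk $u : (D^2, \partial D^2) \to (Y, \Pi)$ the pull-back bundle $u^*Z \to D^2$ is trivializable because $D^2$ is contractible, so Stokes' theorem applied to the pull-back connection yields
\[
\hol(\partial u) \;=\; \exp\!\left(-2\pi i \int_{D^2} u^*\omega_Y\right),
\]
with the signs and normalizations being those already fixed in the paper (the curvature $d\alpha = -p^*\omega_Y$, and the identification of the fiber of $Z$ with $\R/\Z$).  Apply this to the two Maslov index two disks $u_1, u_2$: the monotonicity of $\Pi$ with constant $\tau$ gives $\int u_j^*\omega_Y = 1/\tau$ for $j = 1, 2$, so $\hol(\partial u_1) = \hol(\partial u_2)$.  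Since the holonomy factors through $H_1(\Pi)$ (and $\pi_1(\Pi)$ is abelian in the situations of interest, $\Pi$ being a torus), the class $[\partial u_1] - [\partial u_2]$ lies in $\ker(\hol)$, hence in the image of $p_*$.

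The only non-mechanical point is the bookkeeping in the Stokes calculation, matching the period of the $S^1$-fiber with the normalization of $\alpha$ so that the monotone value $1/\tau$ appears exactly where needed; the geometric content is simply that Maslov two monotone disks share a common symplectic area, which forces their boundaries to have equal holonomy.
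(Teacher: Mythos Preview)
Your proof is correct and takes essentially the same approach as the paper: the paper's argument is simply that the monodromies of $Z$ around $[\partial u_1]$ and $[\partial u_2]$ agree by monotonicity, so the difference lifts to $\pi_1(\Lambda)$. You have spelled out the two implicit steps---identifying the image of $p_*$ with the kernel of the holonomy homomorphism via the flat horizontal distribution over $\Pi$, and computing the holonomy of a disk boundary as $\exp(-2\pi i \int u^*\omega_Y)$ via Stokes---which the paper leaves to the reader.
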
 

\begin{proof} Let $u_1,u_2$ be as in the statement of the Lemma.
  The monodromy of $Z$ around $[\partial u_1],[\partial u_2]$ is equal, by the monotonicity 
  relation, hence $[\partial u_1] - [\partial u_2] $ lifts to an element in 
  $\pi_1(\Lambda)$.
\end{proof}

\begin{corollary}   Let $\Lambda \to \Pi$ be a Legendrian covering a monotone Lagrangian $\Pi$
as above.  The function  $x^{-v} W_\Pi$ descends to a function on $\Rep(\Lambda)$ denoted 
\begin{equation} \label{Wlam}
W_\Lambda := \Rep(p)_* (x^{-v} W_\Pi ) \in \C[\Rep(\Lambda)].  \end{equation}
We call $W_\Lambda$ the augmentation polynomial for the Legendrian $\Lambda.$

\end{corollary}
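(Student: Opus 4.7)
The plan is to exploit the preceding lemma, which says that for any two Maslov index two disks $u_1,u_2$ bounding $\Pi$, the difference $[\partial u_1]-[\partial u_2]\in\pi_1(\Pi)$ lies in the image of the natural map $\pi_1(\Lambda)\to\pi_1(\Pi)$. Combining this with the definition of the disk potential will give the result almost immediately; the only genuine content is the identification of which monomials descend.

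First I would write, by definition of the disk potential,
\[
W_\Pi \;=\; \sum_{u\in\M(\Pi,\pt)_0}\rho([\partial u])\;\in\;\C[\Rep(\Pi)],
\]
where the sum is over rigid (Maslov index two) disks. Choosing the capping path so that $v=[\partial u_0]$ for a distinguished Maslov index two disk $u_0$, we have
\[
x^{-v}W_\Pi \;=\; \sum_{u}x^{[\partial u]-[\partial u_0]}\in\C[\pi_1(\Pi)].
\]
By the Lemma, each exponent $[\partial u]-[\partial u_0]\in\pi_1(\Pi)$ lifts to an element $\mu_u\in\pi_1(\Lambda)$; the specific lift is only unique modulo the kernel of $\pi_1(\Lambda)\to\pi_1(\Pi)$, but since we are evaluating characters that are pulled back via $\Rep(p)$ this ambiguity is harmless.

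Next I would argue that the map $\Rep(p)\colon\Rep(\Pi)\to\Rep(\Lambda)$, which is dual to $\pi_1(\Lambda)\to\pi_1(\Pi)$, is a surjection onto its image and that a function $F\in\C[\Rep(\Pi)]$ is the pullback of some $F'\in\C[\Rep(\Lambda)]$ precisely when $F$ is supported on classes in the image of $\pi_1(\Lambda)$; this is the standard duality between group rings and representation varieties. The previous paragraph verifies exactly this support condition for $x^{-v}W_\Pi$, so one defines
\[
W_\Lambda \;:=\; \sum_{u}x^{\mu_u}\in\C[\Rep(\Lambda)],
\]
and checks that $\Rep(p)_*(x^{-v}W_\Pi)=W_\Lambda$. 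The choice of $v$ at a vertex of the Newton polytope, together with the convexity remark and the basis choice in the excerpt, ensures that the resulting object is a genuine polynomial rather than merely a Laurent polynomial.

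The only step requiring any care is the well-definedness of $W_\Lambda$ as a character on $\Rep(\Lambda)$: one must verify that although the lift $\mu_u$ of $[\partial u]-v$ to $\pi_1(\Lambda)$ is not unique, any two choices give the same value on representations pulled back from $\Rep(\Pi)$, so that the expression $\sum_u x^{\mu_u}\in\C[\Rep(\Lambda)]$ is unambiguous as a function. This is immediate from the definition of $\Rep(p)$, but deserves a sentence. No serious obstacle is expected beyond this bookkeeping, since the heart of the argument has been packaged into the Lemma.
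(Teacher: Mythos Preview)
Your proposal is correct and matches the paper's approach: the paper simply states the Corollary as an immediate consequence of the preceding Lemma without writing out a separate proof, and your argument makes explicit exactly the bookkeeping the paper leaves implicit. One small redundancy: since $\Lambda$ is a connected horizontal lift of $\Pi$, the projection $\Lambda\to\Pi$ is a covering map, so $\pi_1(\Lambda)\to\pi_1(\Pi)$ is \emph{injective} and the lift $\mu_u$ of $[\partial u]-v$ is in fact unique; your paragraph on well-definedness modulo the kernel is therefore unnecessary, though harmless.
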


The potential is related to the leading order term in the differential
as follows.  Let 
\[ \aa \in \cI_\white(\Lambda), \quad \deg_\R(\aa) = 1 \] 
denote the degree one generator representing a point in the space of Reeb chords $\cR(\Lambda)$ with minimal action $2 / \tau$, where $\tau$ is the monotonicity constant for $\Pi$.

\begin{theorem} \label{diffone} Let $(Z,\Lambda)$ be a tame pair as above.  The differential of the degree one generator $\aa
  \in CE_1(\Lambda)$ is
  \begin{equation} \label{diffreeb} \delta^{\ab,0}(\aa) = 
  W_\Lambda( [\mu_1] \exp(\cc_1),\ldots, [\mu_k] \exp(\cc_k)). \end{equation} 
  \end{theorem}

\begin{proof}  We will show that the disks contributing to the abelianized, 
  words-in-degree-zero part of the differential arise from lifts of Maslov index two disks with a single puncture in the lift.   Since the incoming angle from $\aa$ is minimal, \cite[Lemma \ref{I-anglechange}]{BCSW1} 
  implies that any contribution to $\delta(\aa)$ has a single puncture.  The projection to $Y$ must be  a disk bounding $\Pi$ 
  with Maslov index two by monotonicity.     Conversely, by Theorem  \ref{liftthm} any Maslov two disk $u_Y: S \to Y$ bounding $\Pi$ lifts to a punctured disk in $\R \times Z$ bounding $\R \times \Lambda$ with incoming punctured labelled by $\aa \in \cI_\white(\Lambda)$.   The 
  contribution of each disk is now governed by Theorem \ref{repeating}
  which gives an appearance of coefficient $\exp( \lan \cc, [\partial u] \ran)$
  from the intersection with the divisor cycles.
    \end{proof} 
  
  \begin{example} For the standard
  lift $\Lambda$ of the torus $\Pi$ in $\P^1 \times \P^1$ (also arising as conormal for the
  unknot in \cite{eens:knot}) there is a single Reeb chord $\aa$ of index one with
\[ \delta^{\ab,0}(\aa) = 
\left( 1 \pm y_1 \pm {y_2} \pm {y_1}
{y_2}  \right) \]
which is the knot contact differential for the unknot
consider in, for example, Aganagic-Ekholm-Ng-Vafa \cite[p. 39]{aganagic}.
\end{example}

\begin{example} \label{hopf3} 
We partially compute the differential for the
  generalized Hopf Legendrian $\Lambda \subset S^{2n-1} $ in \eqref{hopf} and Example \ref{hopf2}. Let
\[ \aa_{11}, \aa_{22} \in \cI(\Lambda), \quad \deg_\R(\aa_{11}) = \deg_\R(\aa_{22}) = 1  \] 
denote the point generators in the components of ${\cR}(\Lambda)$
connecting $\Lambda_1$ resp. $\Lambda_2$ with action $1/n$.
Introduce coordinates by the projection given by 
\[ \Lambda \to T^{n-1}, \quad (z_1,\ldots, z_n) \mapsto (z_1,\ldots, z_{n-1}) .\]
Denote the standard Morse function  on $\cR(\Lambda)$ 
(the sum of the height functions on each torus component) by 
\[ f_\white: \cR(\Lambda) \to \R .\]
This is the function that on each component of $\cR(\Lambda)$ diffeomorphic to $\Lambda \cong T^{n-1}$ by evaluation at the starting point of the Reeb
chord is given by the sum of the heights on each circle component. 
Denote by 
\[ \aa_{12}, \aa_{21} \in \cI_\white(\Lambda) , \quad \deg_\R(\aa_{12}) = \deg_\R(\aa_{21}) = n\cdot (1/n) -1 = 0 \] 
degree-zero generators in the component of ${\cR}(\Lambda)$ connecting $\Lambda_1$ to $\Lambda_2$ or vice versa with action $\pi/n$.  Denote by 
\[ \cc_{12,k}, \cc_{21,k} \in \cI_\white(\Lambda), \quad 
\deg_\R(\cc_{12,k}) = \deg_\R(\cc_{21,k}) = n\cdot(1/n) + 1 -1  = 1 \] 
for $k = 1,\ldots, {n-1} $ 
the generators corresponding to
Morse-index one critical points for the standard Morse function on the $k$-th
factor in $\Lambda \cong T^{n-1}$

 \begin{lemma} The image of the degree one generator $\cc_{12,k} \in \cI_\white(\Lambda)$
 under the abelianized differential is given by 
  \begin{equation} \label{deltacc2} \delta^{\ab}(\cc_{12,k}) = (1 - y_{k,1}  y_{k,2}^{-1} ) \aa_{12}, \quad
  \delta^{\ab}(\cc_{21,k}) = (1 - y_{k,2}  y_{k,1}^{-1} ) \aa_{21} ; \end{equation}
\begin{equation} \label{deltaaa21}
\delta^{\ab}(\aa_{11}) = 
\left( 1 - y_{1,1} - y_{2,1} \right)  + \aa_{12} \otimes \aa_{21} + \ldots \end{equation}
where the additional terms disappear after  projection onto words in the degree zero generators; 
\[ \delta^{\ab}(\aa_{22}) = 
\left(  1 - y_{2,1} -  y_{2,2} \right)  + \aa_{21} \otimes \aa_{12} + \ldots 
.\]
\end{lemma} 

\begin{proof} First we consider the minimal length generators.  Any configuration $u: C \to \R \times Z$ contributing to $ \delta(\cc_{12,k}) $ has no disk components, since the angle of $\cc_{12,k}$ is minimal and \cite[Lemma \ref{I-anglechange}]{BCSW1} implies
that the angles at the outgoing punctures would be less than the angle at the incoming puncture.  
Thus $u$ must be a Morse trajectory. There are two  Morse trajectories of $f_\circ$ in $\RR(\Lambda)$ connecting $\cc_{12,k}$ to $\aa_{12}$, corresponding to the two different ways of 
descending along the $k$-th circle factor in $\Lambda \cong T^{n-1}$
from the maximum to the minimum.    To calculate the contribution of these 
trajectories,  we must associate to each a homology class
in $H_1(\Lambda)$.  We may assume that the capping paths are chosen so that 
the homology class of the first gradient trajectory is trivial:  Choose the 
capping path for $\aa_{12}$ by concatenating the capping path for $\cc_{12,k}$
with the gradient trajectories images in $\Lambda_1$ and $\Lambda_2$. 
The homology class of the second gradient trajectory then 
differs from the first by the difference in homology classes
$\mu_{k,1} - \mu_{k,2}$.   After including the contributions with 
outgoing edges limiting to the support of $\cc_i$ as in Lemma \ref{repeating}, 
we obtain \eqref{deltacc2}.

Next, we consider the Reeb chords which may be written as concatenations.
The differential of $\aa_{11}$ includes a term
\[ \delta(\aa_{11}) = \aa_{12} \otimes \aa_{21} + \ldots \]
which corresponds to a constant map to the base $\P^{n-1}$ but with
Reeb chord from $\Lambda_1$ to $\Lambda_1$ breaking into two pieces as
in Proposition \ref{prop:vertex}.
Let 
\[ \mu_{1,b},\ldots, \mu_{n-1,b} \in H_1(\Lambda_b) \]
be the standard basis for $H_1(\Lambda_b)$
and 
\[ y_{i,b} = [\mu_{i,b}] \exp(\cc_{i,b})  \in CE^{\ab}(\Lambda).\]  
Combining with the contributions from Example 
\ref{coveringex}, the differential of $\aa_{11}, \aa_{22}$ is given by 
\begin{equation} \label{deltaaa21_proof}
\delta^{\ab}(\aa_{11}) = 
\left( 1 - y_{1,1} - y_{2,1} \right)  + \aa_{12} \otimes \aa_{21} + \ldots \end{equation}
where the additional terms disappear after  projection onto words in the degree zero generators.
The claim for $\aa_{22}$ is similar.  This ends the Example.
\end{proof}
\end{example}

\section{Chain maps from cobordisms} 
\label{mcob}

In this section,  we use the moduli space of holomorphic maps to cobordisms defined in \cite[Section \ref{I-tocob}]{BCSW1} to associate to any cobordism $L$ with concave end $\Lambda_-$ and convex end
$\Lambda_+$ satisfying tameness
conditions, a map of differential algebras
\[ CE(\Lambda_+,\hat{G}(L)) \to CE(\Lambda_-,\hat{G}(L))  .\]
In particular, for tame fillings we obtain augmentations.\footnote{In Ekholm-Ng \cite{ekholmng:higher} and similar papers, 
the augmentation associated to a filling is phrased in terms of a
open Gromov-Witten invariants counting disks without outputs.  We make no claim about existence of such invariants, but rather consider the chain map as an invariant
up to chain homotopy.} 

\subsection{Cobordism maps}

Recall the following definitions of symplectic cobordism and Lagrangian cobordism as in \cite[Section 2]{BCSW1}.

\begin{definition}\label{sympcob2}
Fix two circle-fibered contact manifolds $(Z_{\pm}, Y_{\pm}, \alpha_{\pm})$.
A {\em symplectic cobordism} with convex end $(Z_{+}, Y_{+}, \alpha_{+})$ and concave end $(Z_{-}, Y_{-}, \alpha_{-})$ is a symplectic manifold with boundary $(\tilde{X},\partial \tilde{X}, \omega)$ whose boundary is equipped with a partition
\[ \partial \tilde{X}= \partial_{+} \tilde{X}\cup \partial_{-} \tilde{X} \] 
and diffeomorphisms 
\[\iota_\pm : Z_\pm \to \partial_\pm \tilde{X}  \]
so that
\begin{enumerate} 
\item the two forms satisfy the proportionality relation
\[ \iota_\pm^* \omega\mid_{\partial_{\pm} \tilde{X}}=  \lambda_\pm  \omega_\pm,
\quad \omega_\pm := \d \alpha_\pm  \]
for some positive constants $\lambda_\pm \in \R$, and 
\item  the 
isomorphism  between the normal bundle of $Z_\pm$
in $\widetilde{X}$ and the kernel of $D p: TZ_\pm \to TY_\pm$
induced by the symplectic form is orientation preserving resp. reversing for the convex resp. concave boundary. 
\end{enumerate}
Denote by $X$ the interior of $\widetilde{X}$, viewed as a manifold with cylindrical ends, and, abusing terminology, 
call $X$ the cobordism. 
\end{definition}

\begin{definition}
For a symplectic cobordism $X$ with convex end $Z_{+}$ and concave end $Z_{-}$ and two Legendrian submanifolds $\Lambda_{\pm}\subset Z_{\pm}$ so that $X$ is equipped with cylindrical ends, a {\em Lagrangian cobordism cylindrical near infinity} with convex end $\Lambda_{+}$ and concave end $\Lambda_{-}$ is a   Lagrangian submanifold $L\subset X$ such that
\[ L\cap ((\sigma_{\pm,0}, \sigma_{\pm,1})\times Z_\pm)= (\sigma_{\pm,0}, \sigma_{\pm,1}) \times \Lambda_\pm \]
for some intervals $(\sigma_{\pm,0}, \sigma_{\pm,1 }) \subset \R$.
\end{definition}

By the symplectic cutting construction, we can also compactify the cobordism $(X,L)$ into $(\ol{X}, \ol{L})$, where $\ol{X}$ is a closed symplectic manifold and $\ol{L}$ is a Lagrangian with certain singular points. The following tameness conditions are key to us to define cobordism maps between Chekanov-Eliashberg algebras. We refer to \cite[Section 3.4]{BCSW1} for more detailed discussions.

\begin{definition} \label{nonnegcob} \label{def:tamepair}   The pair $(X,L)$ with concave end $(Z_-,\Lambda_-)$ and convex end $(Z_+,\Lambda_+)$ is a {\em  tame cobordism pair} if and only if both ends $Z_\pm$ are tame, and the following conditions hold.

\begin{enumerate}[label={(\bfseries P\arabic*)}]
  \item \label{p1} {\rm (Rationality)} The symplectic classes $[\omega_{Y_\pm}] \in H^2(Y_\pm)$ and the class  $[\ol{\omega}] \in H^2(\ol{X})$ are rational classes, and the class $ [ \ol{\omega}_{\ol{X} - Y_-}] \in H^2(\ol{X} - Y_-)$ is an integral class.   Also, the Lagrangian $\ol{L}$ is rational.
\item \label{p2} {\rm (No-cap condition)} The logarithmic first Chern class of $\ol{X} - Y_-$ is a very positive
multiple of the symplectic class on the complement of $Y_-$ in the sense that there exists a constant $\lambda_- >0$ so that 
\[  c_1^{\on{log}}(\ol{X} - Y_-) = (1 + \lambda_-)[\ol{\omega} |_{\ol{X} - Y_-}] \in H^2(\ol{X} - Y_-) .\]
\item \label{p3} {\rm (Monotonicity for the concave end)}  If the concave end $\Lambda_-$ of the cobordism $L$ is non-empty then the relative Thom class of the divisor $[Y_-]^\dual \in H^2(\ol{X} - Y_+,\ol{L} - Y_+)$ is non-positive in the sense that there exists a constant $\lambda_+  \ge 0$ so that 
 \[  [Y_-]^\dual  =   -\lambda_+ [ \ol{\omega}|_{\ol{X} - Y_+}]  \in H^2(\ol{X} - Y_+,\ol{L} - Y_+) .\] 
\end{enumerate}
\end{definition}

Now let $L \subset X$ be a tame compact Lagrangian cobordism with concave end $\Lambda_- \subset Z_-$ and convex end $\Lambda_+ \subset Z_+$ equipped with a spin structure.  We assume that $\Lambda_\pm$ are equipped with Morse data as in Definition \ref{morsedatum2}, and in particular with Morse
functions 
\[ f_{\black,\pm}: \Lambda_\pm \to \R. \] 
To simplify notation 
define 
\[  H_2 := H_2(\ti{X},\ti{L} \cup {p}^{-1}(\Pi),\Z) .\]

Any punctured disk in $X$ bounding $L$ extends to a map from a manifold 
with corners $\ti{S}$ mapping to $\ti{X}$ and bounding $\ti{L} \cup {p}^{-1}(\Pi_\pm)$, by adding in the Reeb chords at infinity, and so defines a homology class in $H_2$. This follows from the fact that $p^{-1}(\Pi_\pm)$ is equal to the image of $\Lambda_\pm$ under Reeb flow.  The symplectic form $\ti{\om}$ vanishes on $ \ti{L} \cup {p}^{-1}(\Pi)$ and so 
induces a natural area map 
\[  A: H_2 \to \R .\] 
In our examples, $H_2(\ti{X})$ will often vanish and the long exact sequence
gives an identification
\[ H_2(\ti{X},\ti{L} \cup {p}^{-1}(\Pi),\Z) \cong H_1( \ti{L} \cup {p}^{-1}(\Pi),\Z) ) .\]
\vskip .1in \noindent
The {\em completed group ring} $\hat{G}(L)$, similar to $\mhat{G}(\Lambda)$ in (\ref{cglam}),
is 
\begin{equation} \label{cgl} 
\hat{G}(L) = \Set{ g: H_2 \to \C, \quad 
\forall A, \# \{ c  \in H_2 |  g(c) \neq 0, A(c) < A \} 
\ \on{finite} } .\end{equation}
It admits a natural filtration 
\begin{equation} \label{filt} \hat{G}(L) = \bigcup_A \hat{G}(L)_A, \quad \hat{G}(L)_A = \{ g | g(c) = 0, \forall A(c) < A \} . \end{equation}

\begin{definition} A Morse datum for $L$ is a Morse function
\[ f_L:L \to \R , \quad   f_L |_{\Pi_\pm} = f_{\black,\pm} \] 
extending the given Morse functions $f_{\black,\pm}: \Pi_\pm \to \R$.

\vskip .1in \noindent
The set of {\em relative} resp. {\em absolute generators} is 
\[ 
\cI(L,\partial L ) := \crit(f_L) , \quad 
\cI(L) := \cI(L,\partial L) \cup \cI(\Lambda_-) \cup \cI(\Lambda_+) .\]
The space of {\em absolute} resp. {\em relative chains} is 
\begin{eqnarray}  \label{cel}
CE(L) &:=& \sum_{x \in \cI(L)} \hat{G}(L) x \\
CE(L,\partial L) &:=& \sum_{x \in \cI(L,\partial L)} \hat{G}(L) x \end{eqnarray}
given by the formal sum of generators over the ring 
$ \hat{G}(L) $, with length filtrations going to infinity.
\end{definition}

Define maps between Chekanov-Eliashberg algebras by counts of holomorphic disks bounding
 the Lagrangian cobordism; we will show that this count defines a chain map if a certain Maurer-Cartan equation is satisfied.   Given a treed disk $C = S \cup T$, we orient the edges $T_e \subset T$ away from the root edge, and let $s$ denote a local coordinate on $T_e$ compatible with the metric structure.
 
 \begin{definition} For integers $d_-,d_+ \ge 0$
 let 
 \[ \M(L) = \Set{ u: C \to X | \begin{array}{l} \olp_J u |_S = 0
 \\ \partial_s u  =  - \grad(f_\star(u)) \end{array}
  , \quad u(\partial C) \subset L  }  \] 
 denote the moduli space of perturbed holomorphic treed disks  in $X$ bounding $L$,
 where $f_\star$ means either $f_L, f_{\black,\Lambda_\pm}$ or $f_{\white,\Lambda_\pm}$.
 Let $\M_{d_-,d_+}(L) \subset \M(L)$ denote the locus with 
\begin{center} \begin{tabular}{ll} 
 1 & root edge, with limit an element in $\cI(\Lambda_+)$\\ 
 $d_+$  & edges with limits in $\cI(L,\partial L)$ \\
 $d_-$ &  edges with limits in $\cI(\Lambda_-)$.  \end{tabular}
 \end{center}
See Figure \ref{obs5}.
\end{definition}

\begin{figure}[ht]
    \centering
    \scalebox{.5}{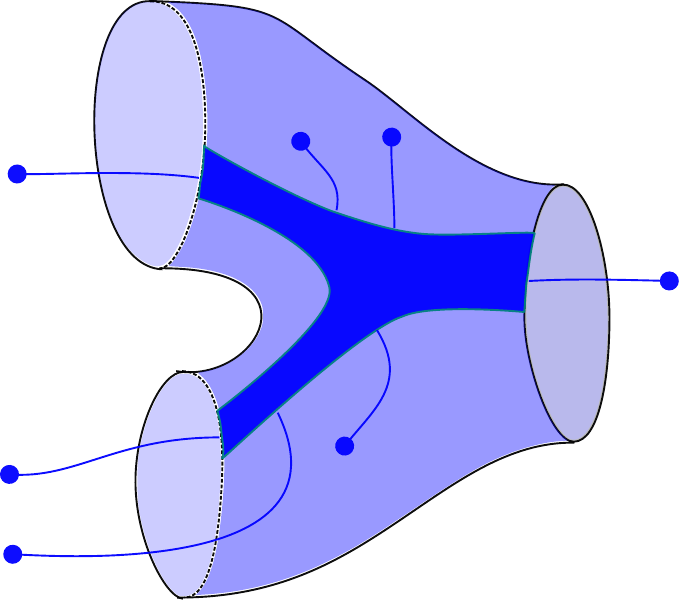}
    \caption{Treed disks defining the chain map}
    \label{obs5}
\end{figure}

By \cite[Section \ref{I-foundsec}]{BCSW1}, for generic systems of domain-dependent perturbations the components of the 
moduli spaces $ \M_{d_-,d_+}(L) $ of dimension at most one are 
transversally cut out, equipped with orientations, and compact for any given energy bound.

 \begin{definition}  Given a cobordism pair $(X,L)$ with concave end $(Z_-,\Lambda_-)$ and convex end $(Z_+,\Lambda_+)$
 and an element $b \in CE(L,\partial L)$, 
 the {\em cobordism map} 
\[ \varphi(L,b):  CE(\Lambda_+,\hat{G}(L)) \to CE(\Lambda_-, \hat{G}(L)) \]
is  defined counting holomorphic treed curves $u: S \to X$ bounding $L$ with arbitrarily 
many insertions of $b$, and exactly one incoming puncture
in each component, as follows: For a single incoming generator 
\[ \gamma = \sum_{x \in \cI(\Lambda_+) } \gamma(x)x   \] 
we have for 
\[x_0 \in \cI(\Lambda_+), \quad \ul{x}_- = (x_1,\ldots, x_{d_-}) \in \cI(\Lambda_-)^{d_-}, \quad \ul{x}_\black = (x_{\black,1},\ldots, x_{\black,d_+}) \in \cI(L,\partial L)^{d_+}\]
\[ (\varphi(L,b))(\gamma) := 
\sum_{ \ul{x}, u \in \M(L,x_0,\ul{x}_-,\ul{x}_\black)_0 }
(-1)^{\sum_{i=1}^{d_-} i |x_i| } \left( \prod  b(x_{\black,i}) \right) \gamma(x_0) x_1 \otimes \ldots \otimes x_{d_-} , \]
where $b(x)$ denotes the coefficient of $x$ in $b$.
See \eqref{heartsign}.  For words 
\[ w = \gamma_1 \ldots \gamma_d \]
define $(\varphi(L,b))(w)$ by
\[ (\varphi(L,b))(\gamma_1 \ldots \gamma_d ) 
= \varphi(L,b)(\gamma_1 ) \ldots \varphi(L,b) (\gamma_d) .\]
Note that such an assignment automatically makes $\varphi(L,b)$ an algebra map.
\end{definition}

In the next section, we discuss what properties of the bounding cochain guarantee the cobordism map is a chain map.   We first discuss the grading properties of the cobordism maps.  The cobordism map
\[ \varphi_L: CE(\Lambda_+) \to CE(\Lambda_-) \] 
defined by a Lagrangian cobordism is $\Z_2$-graded, by the standard computation from Fukaya categories. The cobordism map  $\varphi_L$ is $\R$-graded under a suitable trivial-Maslov-class condition.

\begin{corollary}  Let $(X,L)$ be a tame 
cobordism with concave end $(Z_-,\Lambda_-)$ and convex end $(Z_+,\Lambda_+)$.
Suppose that the trivialization of the canonical bundle of $\R \times Z_\pm$,  agreeing with the given trivialization over $\R \times \Lambda_\pm$ extends over the cobordism $X$ and $b$ has degree one.   Then the real grading on $CE(\Lambda_\pm)$ is preserved by  the cobordism map $\varphi(L,b): CE(\Lambda_+) \to CE(\Lambda_-)$.
\end{corollary}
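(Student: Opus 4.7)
The plan is to reprise the Fredholm index computation of the preceding Lemma, this time incorporating the boundary $b$-insertions. Let $u: C \to X$ be a rigid configuration contributing to $\varphi(L,b)$, with incoming Reeb chord $\gamma_-$, outgoing Reeb chords $\ul{\gamma}_+$, and $d_+$ boundary edges $T_{e_1}, \ldots, T_{e_{d_+}}$ limiting to classical generators $x_{\black,1}, \ldots, x_{\black,d_+} \in \cI(L, \partial L)$. Since $b$ has real degree one, we may assume each $x_{\black,j}$ is a critical point of $f_L$ with $\deg_\R(x_{\black,j}) = \ind_{f_L}(x_{\black,j}) = 1$.

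First, I would degenerate $u$ into a compact disk $u_0: S_0 \to X$ bounding $L$ (now carrying $d_+$ marked boundary points) together with punctured disks $u_\pm$ on the symplectization ends, exactly as in the preceding Lemma's proof. The boundary contributions $\Ind(\tilde D_{u_\pm})$ are unchanged, yielding $\deg_\R(\gamma_-) - \deg_\R(\ul{\gamma}_+)$ plus the same end-counting terms. The Maslov-vanishing hypothesis, which extends over $X$ by assumption, still forces $\Ind(\tilde D_{u_0}) = \dim(L)$ for the underlying unmarked disk.

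Second, I would account for the insertions. Each boundary edge $T_{e_j}$ with a Morse trajectory matching condition at a critical point of index $\ind(x_{\black,j})$ contributes to the virtual dimension by the standard Morse-theoretic count: the marked-point location together with the edge length add $+2$, while the matching of $u|\partial S$ at the marked point with the unstable manifold of $x_{\black,j}$ (of codimension $\dim(L) - \ind(x_{\black,j})$ in $L$) subtracts $\dim(L) - \ind(x_{\black,j})$. Under the normalization used in the preceding Lemma (where each end was assigned $-2\dim(L)$ and an explicit $\#\mathcal{E}(S)$ correction), the parallel bookkeeping for marked boundary points reduces to the statement that each insertion of a degree-one classical generator contributes exactly zero to the virtual dimension. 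This is the familiar fact from Fukaya $A_\infty$ theory that a bounding cochain must have degree one precisely so that arbitrary powers of it preserve the $\R$-degree balance.

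Third, with each $b$-insertion contributing zero, the total index of the rigid configuration $u$ reduces to $\deg_\R(\gamma_-) - \deg_\R(\ul{\gamma}_+) = 0$, exactly as in the preceding Lemma. Hence $\varphi(L,b)$ preserves the real grading. The main obstacle is the index bookkeeping for the boundary insertions; once the per-insertion contribution is shown to equal $\deg_\R(x_{\black,j}) - 1$, the vanishing when $\deg_\R(x_{\black,j}) = 1$ is immediate and the statement follows from the preceding Lemma.
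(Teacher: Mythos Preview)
Your approach is correct and in fact clearer than the paper's own proof about where the degree-one hypothesis on $b$ enters. You follow the decomposition strategy of the preceding Lemma (splitting $u$ into a compact piece $u_0$ and end pieces $u_\pm$) and then explicitly account for each $b$-insertion as contributing $\deg_\R(x_{\black,j}) - 1 = 0$ to the virtual dimension; this is the standard Fukaya-theoretic bookkeeping and it is exactly right.

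The paper instead writes down a direct dimension formula $\dim \M_\Gamma = d-2$ for the stratum and manipulates the rigidity equation $\Ind(D_u) - \dim(\Lambda) - 1 = 0$ into the statement that the incoming and outgoing real degrees agree. The paper's computation does not explicitly isolate the $b$-insertions at all, so the role of the hypothesis ``$b$ has degree one'' is left implicit. Your route makes that dependence transparent, at the cost of repeating the degeneration argument from the Lemma; the paper's route is shorter but opaque on precisely the point the Corollary is meant to add. Either way the content is the same index identity, and your argument is sound.
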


\begin{proof} For strata $\M_\Gamma$ of maximal dimension, the dimension is 
\[ \dim \M_\Gamma = d - 2 =  \sum_{ e \in \mE_+(S)} 1 - \sum_{ e \in \mE_-(S)} 1 - 1 .\]
Rigidity implies that 
\begin{eqnarray*} 
& &\sum_{e \in \mE_+(S)} ( \on{ind}
(f_{\white,+}))(\gamma_e) + \tau \theta_e- 1 
-  \sum_{ e \in \mE_-(S)} (\on{ind}
(f_{\white,-}))(\gamma_e) + \tau \theta_e  - 1 \\
&=& \sum_{e \in \mE_+(S)}  (\on{ind}
(f_{\white,+}))(\gamma_e)  
-  \sum_{ e \in \mE_-(S)} (\on{ind}
(f_{\white,-}))(\gamma_e)   
+ (d-1) \\
&=& \sum_{e \in \mE_+(S)}  (\on{ind}
(f_{\white,+}))(\gamma_e) + \tau \theta_e  
-  \sum_{ e \in \mE_-(S)} (\on{ind}
(f_{\white,-}))(\gamma_e) + \tau \theta_e  \\
&=& \Ind(D_u) - \dim(\Lambda) - 1 \\
&=& 0 .\end{eqnarray*}
Thus the difference is real gradings between incoming and outgoing generators vanishes. 
\end{proof}

\begin{example} In our motivating examples, the canonical bundle is not typically trivial in the 
sense above. 
For example, take $Z_\pm \cong S^1$, let $  \Lambda_-$ be a disjoint union of two points and 
$\Lambda_+$ the emptyset.  Let $L \cong \R$ be a cobordism with concave end $\Lambda_-$ and convex end $\Lambda_+$,
that is, a path joining the two points of $\Lambda_-$.  One sees easily the trivialization of the canonical bundle on $\R \times Z_\pm$ (agreeing with the trivialization determined by the orientation on $\R \times \Lambda_\pm$).  Clearly, there is no trivialization
of the canonical bundle on $(\R \times Z_\pm , L)$ which extends the given trivializations on 
$(\R \times Z_\pm, \R \times \Lambda_\pm)$.
\end{example}

\begin{example} \label{hopf4}  We discuss the grading of the augmentation map associated to the exact filling of the Hopf Legendrian with a single component discussed earlier in \eqref{hopf2}.   Let $\Lambda_{\Hopf} \subset S^{2n-1}$ denote the Hopf Legendrian and $L_{(2)}$ the filling in \eqref{hopf2} containing punctured holomorphic disks 
asymptotic to Reeb chords with action $ 1/ n$.  The degrees
of the corresponding generator $\cc_{12} \in \cR(\Lambda), \cc_{21} \in \cR(\Lambda)$
are
\[ \deg_\R(\cc_{12}) = \deg_\R(\cc_{21}) = 0. \]
Depending on whether the path defining
the filling $L_{(2)}$ passes below or above the critical value in the Lefschetz fibration, 
the augmentation $\varphi(\cc_{12})$ resp. $\varphi(\cc_{21})$ has either $1$ or $n$ terms corresponding to the disks in \eqref{nor1}
or \eqref{nor12}.
\end{example}

\begin{example} \label{hopf4b}   We give an example of a filling, arising naturally from 
a union of paths of a symplectic fibration, which cannot preserve the grading in any natural way.
Suppose $\Lambda' \subset S^{2n-1}$ is the Legendrian obtained
by taking the disjoint union of $\Lambda$ with $e^{i \theta} \Lambda$, where $\theta$ is close, 
but not equal, to $1/6$, for simplicity with $n = 3$.   Consider the filling obtained by a path from $\Lambda$ to $\Lambda'$ as in Lemma \ref{upslem}. Let $\varphi$ denote the corresponding augmentation.
Since the filling is exact, the element $1 + y_{1,1} + y_{2,2}$ maps to $1 + y_1 + y_2$ under $\varphi$,  so we must have $\varphi(\cc_{12}) \varphi(\cc_{21}) \neq 0$.
On the other hand, the elements $\cc_{12}, \cc_{21}$ do not necessarily have zero grading.
There seems to be no way of fixing this issue:  Even if the angle $\varphi$ is chosen 
so that $\cc_{12}$ and $\cc_{21}$ have degree zero, we could add another copy of the Clifford
Legendrian at angle $\theta'$ between $\theta$ and $1/3$.   The differential 
of the Reeb generator $\cc_{21}$ connecting the second sheet to the first sheet passes
through the third sheet, so that 
\[ \delta(\cc_{21}) = \cc_{23} \otimes \cc_{31} .\]
For degree reasons, we must have 
\[ \deg(\cc_{23} \cc_{31}) = \deg(\cc_{23}) + \deg(\cc_{31}) = \deg(\cc_{21}) - 1 = 0 - 1 = -1.  \]
So not both of $\cc_{23}$ and $\cc_{31}$ can have degree zero. But we can easily 
find a filling, using Lemma \ref{upslem} again, whose associated augmentation 
$\varphi$ maps $\cc_{23}$ or $\cc_{31}$ to a non-zero element of $G(\varphi')$.
This shows that there is no grading for which all of the augmentations associated
to fillings of the type considered in Lemma \ref{upslem} are graded.
\end{example}

\begin{remark} \label{goodcases}
In good cases, one can define a  map associated to a cobordism over 
group ring on first homology, rather than relative second homology; this better matches
conventions in earlier papers in the field.  Suppose for example that $(X,L)$ is a filling of $(Z,\Lambda)$, and only holomorphic disks in $(X,L)$ with no punctures contribute to the definition 
of $\varphi(L,b).$  (This could be the case for grading reasons, for example, since
the Reeb chords all have positive $\R$-degree, so $\varphi(L,b)$ is non-vanishing only 
on the classical generators $\cI_\black(\Lambda)$).     Suppose furthermore, that $X$
is simply-connected, so that 
\[ H_2(X,L) \to H_1(L) \]
is a surjection whose kernel is equal to the image $H_2(X)$ in $H_2(X,L)$. From the monotonicity condition \cite[\ref{I-p2}]{BCSW1} of tame fillings, we have that for each $ {[\gamma]} \in H_1 (L) $ there is an upper bound on the area of rigid disks $u$ such that $[\partial u]  = [\gamma]$. Thus we can apply the usual Gromov compactness argument to show that $\varphi(L,b)$ can be defined using $H_1(L)$ coefficients. 
\end{remark}

\subsection{Bounding chains}

\label{bchain} \label{unobs}

In this section,  we  study under what conditions the cobordism maps from the previous section
are chain maps.  Recall from \cite[Theorem \ref{I-twolevels3}]{BCSW1} that 
 the boundary configurations in the one-dimensional components of the moduli space  of curves bounding the Lagrangian include buildings  with two levels and buildings  with a single level with an infinite length edge.    In order to obtain a chain map, contributions from the second type of map must vanish.

Following Fukaya-Oh-Ohta-Ono \cite{fooo:part1} we define a notion of Maurer-Cartan solutions as follows.  The Maurer-Cartan equation counts curves with inputs arising from chains
in the interior of the cobordism, and outputs either in the interior or on the boundary, as in Figure \ref{curvature}.   As such, it does not arise from an \ainfty algebra because of the additional breakings into levels.   We first define the combinatorial types of curves that occur. 

\begin{definition}   A treed building $u:C = S \cup T \to X$ bounding $L$ is {\em of obstruction type} if 
all of the edges $T_{e_i} \subset T, i= 1,\ldots, d$ have limits that are critical points  $x(e) \in \cI(L, \partial L)$ in the interior of $L$, rather than 
critical points on $\Lambda_\pm$ or $\cR(\Lambda_\pm)$.
\end{definition}

Thus, for obstruction type treed buildings   the root edge $T_{e_0}$ is either asymptotic to a critical point $x(e_0)$ in the interior of $L$ or in $\Lambda$ or $\cR(\Lambda)$.    We introduce notation for moduli spaces of holomorphic disks of obstruction type.  Denote by 
\[ \M_{d}(L,\partial L) = \bigcup_{\bGamma} \M_{\bGamma}(L, \partial L)  \]
the moduli space of treed buildings in $X$ bounding $L$ of obstruction types with arbitrary $d$ leaves labelled by $\cI(L,\partial L)$,  and one root edge labelled by $\cI(L,\partial L)$ as in Figure \ref{curvature}.  By definition, configurations $u: C \to X$ in $\M(L,\partial L)$ are treed disks in $X$ bounding $L$ not meeting  $Z_\pm$.   As a special kind of building, the moduli spaces of $\M_d(L,\partial L)$ of expected dimension at most one are compact and regular by Theorems \ref{I-twolevels} and \ref{I-compactness} in \cite{BCSW1}.

\begin{definition}\label{defmc}
Let $(X,L)$ be a tame cobordism. The {\em higher relative composition maps} 
\begin{equation} \label{mds} m_d: CE(L,\partial L)^{\otimes d} \to CE(L,\partial L) \end{equation}
are defined by 
\[ (x_d,\ldots, x_1) \mapsto \sum_{u \in
    \M_d(L,x_0,\ldots,x_d)_0} (-1)^{\heartsuit} \wt(u)x_0  \]
    where the weight $\wt(u)$ is defined by the expression in \eqref{wu}
    and the sign is from \eqref{heartsign}.
    Let 
\[ CE(L,\partial L)_{> 0} := \Set{  b = \sum_{x \in \cI(L,\partial L)} b(x) x | \forall x \in \cI(L), \  \exists A > 0, b(x) \in \hat{G}(L)_A  }  \] 
be the subset of critical points with positive part of the filtration 
\eqref{filt}.

\vskip .1in \noindent 
Given $b \in CE(L,\partial L)_{> 0}$ the {\em curvature} of $b$ is the element 
\[ m(b) := \sum_{d \ge 0} \frac{1}{d!} m_d(b,\ldots, b)  \in CE(L,\partial L) \]
obtained by all possible insertions of $b$ in $m_d$, well defined by definition of the 
completion in \eqref{cgl}.

\vskip .1in \noindent 
The chain $b$ is a {\em bounding chain} or a {\em Maurer-Cartan solution} if $m(b) = 0$. In particular, if $m_0(1)=0$ then the element $b=0$ is a bounding chain.

\vskip .1in \noindent 
The  {\em Maurer-Cartan space} of bounding chains is 
\[ MC(L) = \{ b \in CE(L, \partial L)_{>0} \ | \  m(b) = 0 \} .\]

\vskip .1in \noindent 
The cobordism is  $L$ {\em unobstructed} if $MC(L)$ is non-empty, that is, there exists a bounding chain.       This ends the Definition.
\end{definition}

\begin{figure}[ht]
    \centering
    
    \scalebox{.5}
{  
\begingroup%
  \makeatletter%
  \providecommand\color[2][]{%
    \errmessage{(Inkscape) Color is used for the text in Inkscape, but the package 'color.sty' is not loaded}%
    \renewcommand\color[2][]{}%
  }%
  \providecommand\transparent[1]{%
    \errmessage{(Inkscape) Transparency is used (non-zero) for the text in Inkscape, but the package 'transparent.sty' is not loaded}%
    \renewcommand\transparent[1]{}%
  }%
  \providecommand\rotatebox[2]{#2}%
  \newcommand*\fsize{\dimexpr\f@size pt\relax}%
  \newcommand*\lineheight[1]{\fontsize{\fsize}{#1\fsize}\selectfont}%
  \ifx\svgwidth\undefined%
    \setlength{\unitlength}{248.20522278bp}%
    \ifx\svgscale\undefined%
      \relax%
    \else%
      \setlength{\unitlength}{\unitlength * \real{\svgscale}}%
    \fi%
  \else%
    \setlength{\unitlength}{\svgwidth}%
  \fi%
  \global\let\svgwidth\undefined%
  \global\let\svgscale\undefined%
  \makeatother%
  \begin{picture}(1,1.21415594)%
    \lineheight{1}%
    \setlength\tabcolsep{0pt}%
    \put(0,0){\includegraphics[width=\unitlength,page=1]{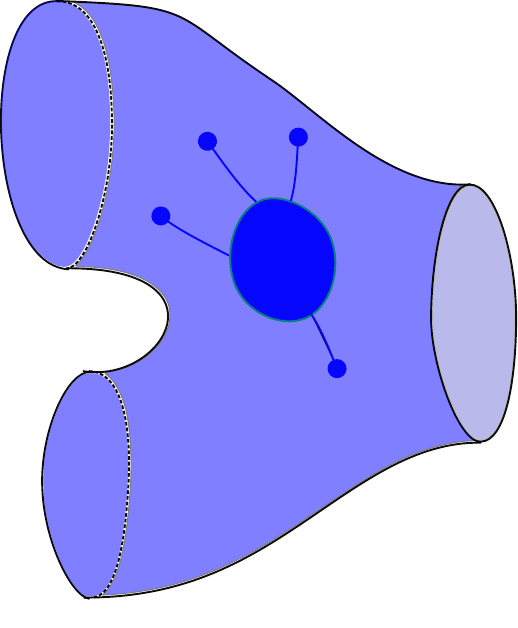}}%
    \put(0.11530615,0.00625242){\color[rgb]{0.2,0.2,0.2}\transparent{0.64502198}\makebox(0,0)[lt]{\lineheight{1.25}\smash{\begin{tabular}[t]{l}$Z_-$\end{tabular}}}}%
    \put(0.88779785,0.31754711){\color[rgb]{0.2,0.2,0.2}\transparent{0.64502198}\makebox(0,0)[lt]{\lineheight{1.25}\smash{\begin{tabular}[t]{l}$Z_+$\end{tabular}}}}%
    \put(0.38630046,0.31345759){\color[rgb]{0.2,0.2,0.2}\transparent{0.64502198}\makebox(0,0)[lt]{\lineheight{1.25}\smash{\begin{tabular}[t]{l}$X$\end{tabular}}}}%
  \end{picture}%
\endgroup%
}
    \caption{Holomorphic curves defining the curvature}
    \label{curvature}
\end{figure}

\begin{lemma} \label{lem:exactunobs} Suppose $L \subset X$ is exact.   Then the element $b = 0$ is a bounding chain, so $L$ is unobstructed.
\end{lemma}

\begin{proof} By the stability condition, any such disk 
$u$ contributing to $m(b)$  has at least one non-constant component $S_v \subset C$, corresponding to a leaf of the type $\Gamma$ of $u$.   This is impossible, since 
exactness implies that any holomorphic disk is constant. 
\end{proof}

\begin{theorem} \label{indep} Let $(X,L)$ be a tame pair and 
suppose $b \in MC(L)$ is a Maurer-Cartan solution.  Then the map ${\varphi}(L,b)$  is a map of differential algebras (chain map and algebra homomorphism) and so induces  maps in contact homology
\[ H(\varphi(L,b)) : HE(\Lambda_+) \to HE(\Lambda_-), 
\quad H(\varphi_\white(L,b)) : HE_\white(\Lambda_+) \to HE_\white(\Lambda_-).\]
\end{theorem}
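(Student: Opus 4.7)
The plan is to analyze the boundary of the one-dimensional components of the moduli spaces defining $\varphi(L,b)$, identify each boundary stratum with a term in $\delta_+ \varphi(L,b) - \varphi(L,b) \delta_-$, and show that the remaining strata assemble into contributions involving the curvature $m(b)$, which vanish by the Maurer--Cartan equation.

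First I would set up, for each pair $(\gamma_-, \ul{\gamma}_+)$ with $\gamma_- \in \cI(\Lambda_-)$ and $\ul{\gamma}_+ \in \cI(\Lambda_+)^{d_+}$, the moduli spaces
\[ \M_{d_-, d_+}(L; \gamma_-, \ul{\gamma}_+)_1 \]
of expected dimension one, where $d_-$ is the number of interior inputs labelled by elements of $\cI(L,\partial L)$ (to be filled with copies of $b$). By \cite[Theorem \ref{I-twolevels3}]{BCSW1}, together with the compactness result in \cite[Section \ref{I-foundsec}]{BCSW1}, the true boundary of these moduli spaces consists of configurations of two types: (a) two-level buildings in which a component has broken off along either $Z_-$ or $Z_+$, and (b) single-level buildings in which an interior edge has acquired infinite length, producing a pair of configurations joined at a critical point of $f_L$. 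All other strata (those arising from edge lengths going to zero, nodes forming, etc.) cancel in pairs within the interior by the usual cobordism argument.

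Next I would identify the contribution of each type of boundary stratum. Breaking at the negative end corresponds to a punctured disk in $\R \times Z_-$ (contributing to $\delta_-$) glued to a disk in $X$ bounding $L$ with $b$-insertions (contributing to $\varphi(L,b)$); summing such contributions gives $\varphi(L,b) \circ \delta_-$ after the usual Koszul sign computation as in the proof of Theorem \ref{tsquare}. Breaking at the positive end symmetrically contributes $\delta_+ \circ \varphi(L,b)$. The interior breakings of type (b) split into two subcases: either the detached piece is itself a cobordism configuration with $b$-insertions (and so joins $\varphi(L,b)$ applied to $m(b) = 0$ in the critical-point label), or the piece that remains is a configuration of obstruction type with $b$-insertions whose output feeds into an interior edge of the main cobordism configuration. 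In both cases, after accounting for the factorial $1/d!$ arising from symmetrizing over the order of $b$-insertions (as in Theorem \ref{repeating}, which produces the exponential-type generating function $\sum_d \tfrac{1}{d!} m_d(b,\ldots,b)$), the total contribution equals a configuration with a $m(b)$-labelled insertion, which vanishes since $b \in MC(L)$.

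The main obstacle will be bookkeeping for the insertions of $b$ and the signs. Specifically, one must check that the standard combinatorial identity which in \ainfty-theory produces the Maurer--Cartan equation still holds here despite the fact that the $m_d$ do not form an \ainfty algebra (because of the extra breakings into cylindrical ends); the point is that these extra breakings are exactly the two-level buildings already accounted for as $\delta_\pm$-contributions, so the remaining boundary terms truly reorganize as $m(b)$-insertions. The sign computation is the same as in Charest--Woodward \cite{flips} and the computation in Theorem \ref{tsquare}. Putting it all together yields
\[ \delta_+ \varphi(L,b) - \varphi(L,b) \delta_- = \sum_{k \geq 0} \tfrac{1}{k!}\, \varphi(L,b)\!\left(\, \cdot\,;\, m_k(b,\ldots,b)\,\right) = 0, \]
so $\varphi(L,b)$ is a chain map, hence descends to $H(\varphi(L,b))\colon HE(\Lambda_-) \to HE(\Lambda_+)$. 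The argument for the reduced differential $\delta_\white$ is identical, restricting attention to configurations whose outputs are all Reeb generators.
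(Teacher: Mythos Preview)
Your approach is essentially the same as the paper's: analyze boundaries of one-dimensional moduli spaces, identify the $\delta_\pm$ contributions from two-level breakings, and show the remaining terms vanish by the Maurer--Cartan equation. However, your case analysis has a gap. You treat all two-level breakings at $Z_+$ as contributing to $\delta_+ \circ \varphi(L,b)$, but there is a further type the paper singles out: a two-level building in which the $X$-level has \emph{two} connected components, one carrying the incoming puncture from $Z_-$ and one of obstruction type (no incoming puncture, only $b$-insertions), the latter joined to the $Z_+$-level via an edge through a generator $x \in \cI(\Lambda_+)$. The count of such obstruction-type pieces is the coefficient of $x$ in $m(b)$, which vanishes---but this case must be identified separately, since the output of $m(b)$ lies in $CE(L)$, not just in $CE(L,\partial L)$. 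Your type~(b) breakings (single level, infinite-length interior edge) cover only the situation where the obstruction piece's output is an \emph{interior} critical point of $f_L$; the case where its output lands on $\Lambda_+$ or in $\cR(\Lambda_+)$ is exactly what you are missing, and it is not subsumed by the $\delta_+ \varphi$ term.

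A minor point: your appeal to Theorem~\ref{repeating} for the $1/d!$ factor is misplaced. That theorem concerns repeated divisor insertions and has nothing to do with the $b$-insertions in the cobordism map; the $1/d!$ in the definition of $m(b)$ is a convention, and in any case the matching of boundary strata with the curvature does not require any symmetrization argument---each broken configuration simply identifies with a pair (obstruction piece, remaining piece), and the sum over obstruction pieces gives the relevant coefficient of $m(b)$ directly.
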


\begin{proof}   The map $\varphi(L,b)$ is an algebra map by definition, since the product is given by concatenation.  To prove it is a chain map under the hypotheses given in the Theorem, we classify the types of breakings in the true boundary configurations.  Any true boundary configuration $u:C \to X$ of the one-dimensional component of the moduli space $\M(L)$ of treed disks bounding $L$ is a configuration with some collection of trajectories $T_{e_1},\ldots, T_{e_k}$ of infinite length.  The broken trajectories $T_{e_1},\ldots, T_{e_k}$ may either connect two different levels $C_i,C_{i+1}$ of $C$, or two disks $S_{v_1}, S_{v_2}$ within the same level  $C_i$ as shown in Figure \ref{obs13}.  In case there are two different levels, one of the levels maps to either
the incoming or outgoing cylindrical end. 

We analyze the count of boundary configurations in each case.
\begin{figure}[ht]
    \centering
    \scalebox{.3}{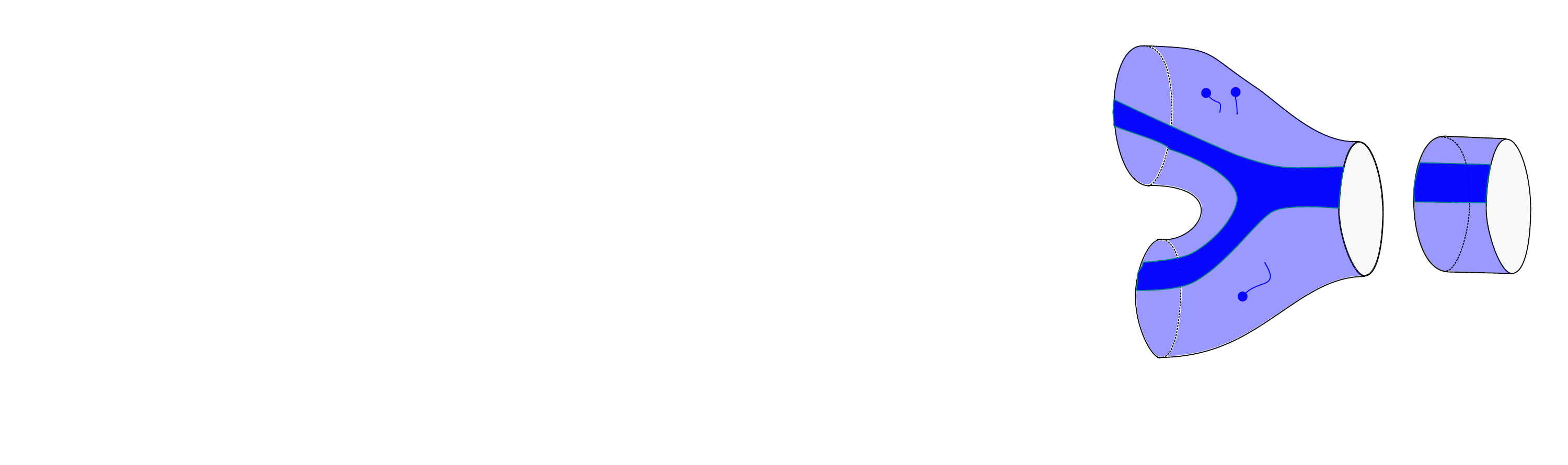}
    \caption{Boundaries of moduli spaces of obstruction type}
    \label{obs13}
\end{figure}
First, consider configurations $u: C \to X$  with two levels $C_1,C_2$ in  $X$ and $\R \times Z_+$, with  the level in $X$ having two connected components $C_{1,1}, C_{1,2}$, one $C_{1,2}$ with no incoming puncture  and the other $C_{1,2}$ with one incoming  puncture.  Suppose $C_{1,1}$ is joined to $C_2$ by an edge passing through a generator $x \in \cI(\Lambda_+)$. The count of configurations $C_{1,2}$ is the coefficient  of $x$ in $m(b)$ which vanishes by assumption. 

Second, consider configurations $u: C \to X$ with two levels $C_1,C_2$ either in $\R \times Z_-$ and $X$ or in $X$ and $\R \times Z_+$, with exactly one incoming puncture on each component.  These configurations contribute to the expressions
\[ \delta_+ \circ \varphi(L,b), \quad \varphi(L,b) \circ \delta_- \]
respectively. 

The chain property follows the fact 
that the boundary of the one-dimensional moduli space of configurations has vanishing signed count; with the sign computation being that in Karlsson 
\cite{karlsson:coherent}.
\end{proof}

\subsection{Independence of the cobordism maps from choices}

In this section,  we show that the chain maps associated to cobordisms
are independent of all choices, up to chain homotopy and a suitable change in the 
bounding chain, as in the theory of Fukaya-Oh-Ohta-Ono \cite{fooo:part1, fooo:part2}.  To state the theorem, we recall from Charest-Woodward \cite{flips} the notion of treed quilted disk. 

\vskip .1in \noindent 
\begin{definition}  Let $k \ge 1$.   A {\em quilted disk}  is a disk $S$  with $k$ seams in the complex plane $\C$, equipped with $k$ {\em seams} $Q= (Q_1,\ldots, Q_k)$ which are circles in $S$ tangent to the
boundary  $\partial S$.   Up to isomorphism we may assume
that $S$ is the unit disk centered at $0$.

\vskip .1in \noindent 
A {\em nodal pre-stable quilted disk}  is a  connected union $(S,Q)$ of disks, spheres, and quilted disks $S_v \subset S$ corresponding to vertices $v \in \Ver(\Gamma)$
of some tree $\Gamma$, together with  a collection of boundary markings
\[ \ul{z}_\white = (z_{\white,0},\ldots, z_{\white,d}) \in \partial S \]
disjoint from the nodes, 
and a collection of interior markings 
\[ \ul{z}_\black =  (z_{\black,0}, \ldots, z_{\black,e}) \in \on{int}(S) \]
with the following property:

\vskip .1in \noindent On any path of components $S_v$ from the {\em root marking} $z_0$ to a {\em leaf marking} $z_i, i > 0$
there is exactly one quilted component $S_v \subset S$.

\vskip .1in \noindent  A pre-stable
quilted disk $(S,Q)$ is {\em stable}  if each unquilted resp. quilted component has at least three resp. two marked or nodal points.

\vskip .1in \noindent
A {\em stable treed quilted disk} $(C,Q)$ is obtained from a  stable 
quilted disk $(S,Q)$ by replacing each node $S_{v_-} \cap S_{v_+}$ with an edge $T_e$ 
which is an interval of length $\ell(e) \in [0,\infty]$, and each 
marking with a semi-infinite edge $T_e$ with the following property:

\vskip .1in \noindent  If $T_{e'}$ and $T_{e''}$ are two boundary leaves, 
and $T_{e_1},\ldots, T_{e_k}$ are the edges in a path connecting them, then 
\[ \sum_{i=1}^k \pm \ell(e_i) = 0 \]
where the sign is positive if the edges $T_{e_i}$ in the path 
points towards the root edge $T_{e_0}$ and negative otherwise.

\vskip .1in \noindent
Each disk component $S_{v_1},\ldots, S_{v_k}$ of $S$ is equipped with a natural 
(possibly infinite) 
{\em distance to the seam} 
\begin{equation} \label{dv} 
d(v) = \pm \sum_{e \in \gamma} \ell(e) \in [-\infty,\infty] \end{equation}
where $\gamma$ is the path of edges  to the component 
$S_{v_0}$ connecting to the quilted vertices, and the sign is negative
if $S_v$ lies closer resp. further away from the root vertex than the seam.
Define the {\em normalized distance} as 
\begin{equation} \label{tidv}
\ti{d}(v) = \frac{d(v)}{\sqrt{d(v)^2 + 1}} \in [-1,1] \end{equation}
taking values in $\{ -1, 1 \}$ whenever the distance  $d(v)$ is infinite. 
\vskip .1in \noindent
The 
boundary edges $T_e$ are partitioned into those that are closer
to the root edge $T_{e_0}$ than the quilted disk components, 
and those that are further away; we call these groups
of edges {\em root-close} and {\em root-distant} respectively
and denote the subsets of such edges $\Edge_\pm(\Gamma)$
respectively. 
See Figure \ref{mw21}.  
\end{definition}

\begin{figure}[ht]
    \centering       
    \scalebox{.5}{
\includegraphics{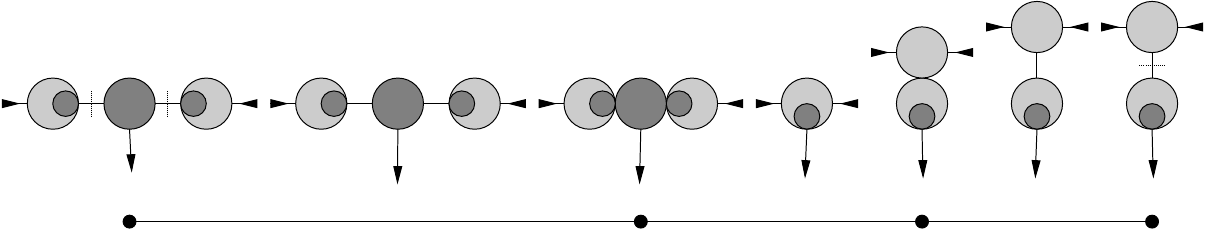}}
    \caption{Moduli space of treed quilted disks}
    \label{mw21}
\end{figure}

We define quilted disks associated to  homotopies of almost complex structures and Morse functions. 
Let  $(J_\pm,f_\pm)$  be two choices of  cylindrical almost complex structure  and Morse data and $J_t, t \in [-1,1]$ a homotopy of almost complex structures with ends modelled on $J_\pm$ resp. $f_t$ an isotopy of Morse data from $f_-$ to $f_+$.   

\begin{definition}
    A {\em quilted holomorphic disk} of domain type $\Gamma$ is a continuous map $u: C \to X$ that is 
$J_{\ti{d}(v)}$ holomorphic on each disk component $S_v, v \in \Ver(\Gamma)$, where $\ti{d}(v)$
was defined in \eqref{tidv} and on each edge the map $u$
is an $f_{\ti{d}(e)}$-gradient trajectory.  \end{definition}

In particular, the map satisfies the $f_+$ gradient vector field  equation  on root-distant edges $T_e, e \in \Edge_+(\Gamma)$ and the $f_-$ gradient vector field on root-close edges $T_e, e \in \Edge_-(\Gamma)$.      See Figure \ref{qdisks}.

\begin{figure}[ht]
    \centering
    \scalebox{.5}{
\begingroup%
  \makeatletter%
  \providecommand\color[2][]{%
    \errmessage{(Inkscape) Color is used for the text in Inkscape, but the package 'color.sty' is not loaded}%
    \renewcommand\color[2][]{}%
  }%
  \providecommand\transparent[1]{%
    \errmessage{(Inkscape) Transparency is used (non-zero) for the text in Inkscape, but the package 'transparent.sty' is not loaded}%
    \renewcommand\transparent[1]{}%
  }%
  \providecommand\rotatebox[2]{#2}%
  \newcommand*\fsize{\dimexpr\f@size pt\relax}%
  \newcommand*\lineheight[1]{\fontsize{\fsize}{#1\fsize}\selectfont}%
  \ifx\svgwidth\undefined%
    \setlength{\unitlength}{248.20524441bp}%
    \ifx\svgscale\undefined%
      \relax%
    \else%
      \setlength{\unitlength}{\unitlength * \real{\svgscale}}%
    \fi%
  \else%
    \setlength{\unitlength}{\svgwidth}%
  \fi%
  \global\let\svgwidth\undefined%
  \global\let\svgscale\undefined%
  \makeatother%
  \begin{picture}(1,1.23676042)%
    \lineheight{1}%
    \setlength\tabcolsep{0pt}%
    \put(0,0){\includegraphics[width=\unitlength,page=1]{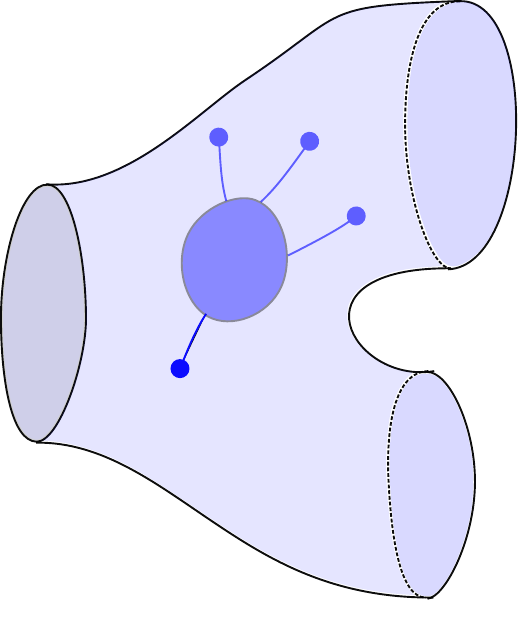}}%
    \put(0.33065654,0.48159729){\color[rgb]{0,0,0}\makebox(0,0)[lt]{\lineheight{0}\smash{\begin{tabular}[t]{l}x\end{tabular}}}}%
    \put(0.04153872,0.27488784){\color[rgb]{0.2,0.2,0.2}\transparent{0.64502198}\makebox(0,0)[lt]{\lineheight{1.25}\smash{\begin{tabular}[t]{l}$Z_-$\end{tabular}}}}%
    \put(0.7828393,0.00625252){\color[rgb]{0.2,0.2,0.2}\transparent{0.64502198}\makebox(0,0)[lt]{\lineheight{1.25}\smash{\begin{tabular}[t]{l}$Z_+$\end{tabular}}}}%
    \put(0.61369954,0.33606212){\color[rgb]{0.2,0.2,0.2}\transparent{0.64502198}\rotatebox{-180}{\makebox(0,0)[lt]{\lineheight{1.25}\smash{\begin{tabular}[t]{l}X\end{tabular}}}}}%
    \put(0,0){\includegraphics[width=\unitlength,page=2]{obs2.pdf}}%
  \end{picture}%
\endgroup%
}
    \caption{A quilted disk in a cobordism}
    \label{qdisks}
\end{figure}

Map types for quilted tree disks are graphs decorated with homology classes as before.  In particular, 
a map type $\bGamma$ consists of a tree $\Gamma$ with vertices $\Ver(\Gamma)$ and edges $\Edge(\Gamma)$,
a subset $\Ver^q(\Gamma) \subset \Ver(\Gamma)$ of vertices corresponding to the quilted components, and a labelling of vertices $v$
by homology classes $\beta(v) \in H_2(\ol{X},\ol{L}) \cup  H_2(\ol{X})$.   For any type $\bGamma$ of quilted treed disk
denote by  $\M^{q,\black}_{\bGamma}(L)$ the moduli space of quilted treed disks of type $\bGamma$ and
\[  \M_{d,1}^q(L) = \bigcup_\bGamma \M^{q,\black}_{\bGamma}(L) \] 
the union over combinatorial types with $d$ incoming leaves,  $\M^{q,\black}_{d,1}(L,\ul{x}_-,x_+)$ the locus with limits $\ul{x}_-,x_+$ along the incoming and outgoing edges and  $\M^{q,\black}_{d,1}(L,\ul{x}_-,x_+)_0$ the locus of rigid maps in the sense of \cite[Equation \eqref{I-rigidmap}]{BCSW1}.   Define 
\[ 
  q_d: CE(L,\partial L;f_0)^{\otimes d} \to CE(L,\partial L; f_1), \
  \ul{x}_- \mapsto \sum_{u \in
    \M_{d,1}(L,\ul{x}_-,x_+)_0} (-1)^{\heartsuit } \wt(u) x_+ .\]
The use of quilts here is a purely combinatorial device, and the seams of the quilts have no geometric meaning as opposed to their use for Lagrangian correspondences in \cite{ainfty}.

\begin{theorem}  Let $(J_\pm,f_\pm)$  be two choices of  cylindrical almost complex structure and $(J_t,f_t)$ a homotopy  as above. 
   Counting quilted holomorphic disks defines  a map 
\[ MC(J_\bullet, f_\bullet): MC(J_+,f_+) \to MC(J_-,f_-). \] 
In particular, non-emptiness of $ MC(J,f)$ is independent of the choice of almost complex structure $J$ on $X$ and Morse function $f_L$ on $L$.
\end{theorem}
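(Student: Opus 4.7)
The plan is to define the map by averaging over quilted treed disks,
\[ MC(J_\bullet,f_\bullet)(b) := \sum_{d \ge 0} \frac{1}{d!} q_d(b,\ldots,b) \in CE(L,\partial L; f_1), \]
for $b \in MC(J_0,f_0)$, and show the result satisfies the Maurer-Cartan equation for $(J_1,f_1)$. Well-definedness of the sum in the completion $\hat{G}(L)$ follows from Gromov compactness and tameness of the cobordism: for each area bound only finitely many homology classes and output generators contribute, and the input count $d$ required for a given area grows, so the contributions lie in increasing filtration levels.

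First I would set up a parametrized moduli space of quilted treed disks of obstruction type with all inputs labelled by $b$ and a single output $x \in \cI(L,\partial L)$, choosing perturbations that agree with those used to define $m^{(0)}$ and $m^{(1)}$ on the two ends and satisfy the coherence conditions of \cite[Section~\ref{I-foundsec}]{BCSW1}. Standard transversality then yields regularity and compactness of the strata of expected dimension at most one, with orientations extending those on the unquilted components, by the same arguments that produce the operations $m_d^{(k)}$ and $q_d$ in the first place.

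Second, I would enumerate the true codimension-one boundary strata of this parametrized moduli space. These split into: (a) an edge length becoming infinite on the root-distant side, pinching off an obstruction configuration for $(J_0,f_0)$ and contributing a term of the form $q(\ldots,m_k^{(0)}(b,\ldots,b),\ldots)$ with an appropriate combinatorial factor; (b) an analogous degeneration on the root-close side contributing $m_k^{(1)}(\ldots,q_{d_i}(b,\ldots,b),\ldots)$; (c) bubblings into the symplectization ends $\R \times Z_\pm$, which for obstruction-type configurations reduce to trivial strips by Lemma~\ref{trivstrip} and the no-cap hypotheses of \cite[Definition~\ref{I-tamedL}]{BCSW1}; and (d) seam-collision or ghost-bubble degenerations, which cancel in pairs under the symmetric perturbation scheme of Section~\ref{inv}. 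Summing (a) and (b) with the combinatorial factors coming from the exponential and applying the hypothesis $m^{(0)}(b) = 0$ kills all (a)-type contributions, leaving precisely $m^{(1)}(MC(J_\bullet,f_\bullet)(b)) = 0$.

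The main obstacle is the analysis of boundary strata of type (c): one must rule out configurations in which the compactification of the quilted moduli space adds breakings where a quilted component escapes into a cylindrical end. This reduces to an index and angle-change calculation in the spirit of \cite[Lemma~\ref{I-anglechange}]{BCSW1}, combined with the observation that obstruction-type domains carry no Reeb-chord punctures on their leaves, so any SFT-level breaking at infinity must be either a trivial strip or decoupled from the quilted vertex. Granting this boundary analysis, the non-emptiness claim is an immediate corollary: pick any homotopy $(J_t,f_t)$ interpolating between the two data, apply $MC(J_\bullet,f_\bullet)$ to a given Maurer-Cartan element for $(J_0,f_0)$, and obtain one for $(J_1,f_1)$.
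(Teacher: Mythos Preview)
Your overall strategy matches the paper's: define $b_1$ by summing $q_d(b_0,\ldots,b_0)$ over $d$ and analyze the boundary of the one-dimensional quilted moduli space to show $m^{(1)}(b_1)=0$. However, your description of the root-close degeneration in (b) is structurally wrong, and this is the step where the argument actually works.

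The quilted moduli space here is a multiplihedron, and its codimension-one faces are \emph{not} symmetric in the way you suggest. On the root-distant side (your (a)), a single unquilted $(J_0,f_0)$-subtree bubbles off one input slot, giving terms $q_{d-k+1}(\ldots,m_k^{(0)}(b,\ldots,b),\ldots)$; this part you have right. But on the root-close side the defining constraint on quilted treed disks forces all quilted components to sit at a common distance from the root, so when that distance goes to infinity \emph{all} the edges between the root vertex and the quilted layer break simultaneously. The resulting face contributes
\[
m_r^{(1)}\bigl(q_{j_1}(b,\ldots,b),\ldots,q_{j_r}(b,\ldots,b)\bigr),
\]
with \emph{every} input of $m_r^{(1)}$ a $q$-output. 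Summing over $r$ and the $j_i$ gives exactly $\sum_r m_r^{(1)}(b_1,\ldots,b_1)=m^{(1)}(b_1)$. Your expression $m_k^{(1)}(\ldots,q_{d_i}(b,\ldots,b),\ldots)$ with a single $q$ inserted would not yield $m^{(1)}(b_1)$ and the argument would collapse. This asymmetry between upper and lower faces is the whole point of using quilts here.

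Two smaller issues. First, your exclusion of SFT breakings in (c) cites the wrong lemma: Lemma~\ref{trivstrip} is about disconnected rigid configurations in a single symplectization level, not about ruling out extra levels. The paper's argument is simply that any nontrivial level in $\R\times Z_\pm$ must carry a Reeb puncture (by \cite[Lemma~\ref{I-1cons}]{BCSW1}), whereas obstruction-type configurations have only interior Morse inputs and $q_d$ lands in $CE(L,\partial L;f_1)$, so no Reeb chords are available at either end. Second, your boundary type (d) (seam collisions and ghost bubbles cancelling under symmetric perturbations) does not appear in the paper's analysis and is not needed; the relevant degenerations are already accounted for by (a)--(c). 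The paper also writes $b_1=\sum_d q_d(b_0,\ldots,b_0)$ without the $1/d!$ you include; whether that factor belongs depends on conventions for ordering the leaves, but you should match whatever convention governs $m(b)$.
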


\begin{figure}[ht]
    \centering
    \scalebox{.5}{
\begingroup%
  \makeatletter%
  \providecommand\color[2][]{%
    \errmessage{(Inkscape) Color is used for the text in Inkscape, but the package 'color.sty' is not loaded}%
    \renewcommand\color[2][]{}%
  }%
  \providecommand\transparent[1]{%
    \errmessage{(Inkscape) Transparency is used (non-zero) for the text in Inkscape, but the package 'transparent.sty' is not loaded}%
    \renewcommand\transparent[1]{}%
  }%
  \providecommand\rotatebox[2]{#2}%
  \newcommand*\fsize{\dimexpr\f@size pt\relax}%
  \newcommand*\lineheight[1]{\fontsize{\fsize}{#1\fsize}\selectfont}%
  \ifx\svgwidth\undefined%
    \setlength{\unitlength}{248.20524441bp}%
    \ifx\svgscale\undefined%
      \relax%
    \else%
      \setlength{\unitlength}{\unitlength * \real{\svgscale}}%
    \fi%
  \else%
    \setlength{\unitlength}{\svgwidth}%
  \fi%
  \global\let\svgwidth\undefined%
  \global\let\svgscale\undefined%
  \makeatother%
  \begin{picture}(1,1.15974545)%
    \lineheight{1}%
    \setlength\tabcolsep{0pt}%
    \put(0,0){\includegraphics[width=\unitlength,page=1]{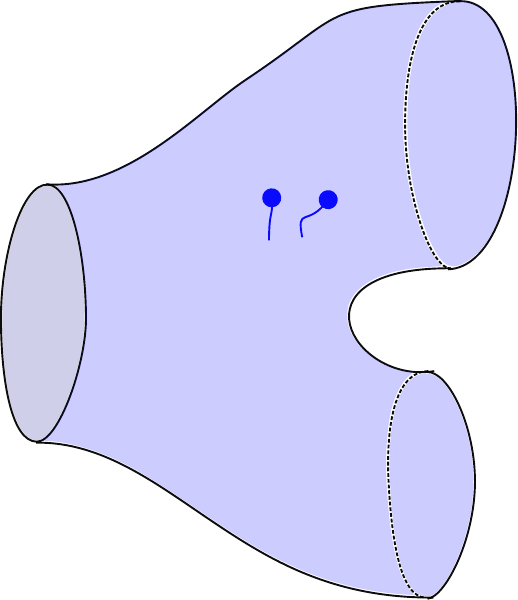}}%
    \put(0.14798554,0.84527062){\color[rgb]{0,0,0}\makebox(0,0)[lt]{\lineheight{0}\smash{\begin{tabular}[t]{l}$b_-$\end{tabular}}}}%
    \put(0.32049532,0.85014142){\color[rgb]{0,0,0}\makebox(0,0)[lt]{\lineheight{0}\smash{\begin{tabular}[t]{l}$b_-$\end{tabular}}}}%
    \put(0.47237676,0.86063315){\color[rgb]{0,0,0}\makebox(0,0)[lt]{\lineheight{0}\smash{\begin{tabular}[t]{l}$b_-$\end{tabular}}}}%
    \put(0.62008695,0.85343529){\color[rgb]{0,0,0}\makebox(0,0)[lt]{\lineheight{0}\smash{\begin{tabular}[t]{l}$b_-$\end{tabular}}}}%
    \put(0,0){\includegraphics[width=\unitlength,page=2]{obs3.pdf}}%
  \end{picture}%
\endgroup%
}
    \caption{Configurations showing that $m(b_+)$ vanishes}
    \label{mb1}
\end{figure}

\begin{proof}   The map between Maurer Cartan spaces is defined as follows.   The count of quilted disks with incoming edges labelled $b_-
\in MC(J_-,f_-)$ defines a formal sum 
\[ b_+ = \sum_{d \ge 0} q_d(b_-,\ldots, b_-) . \] 

To see that this sum defines a Maurer-Cartan solution,  we consider the boundary components of the one-dimensional moduli spaces.  For some $d \ge 0$ consider the one-dimensional boundary component $\M_d^q(L)_1$ of the moduli space of quilted disks
with $d$ incoming leaves. Boundary configurations $u \in \partial \M_d^q(L)_1$ are of the following types: either
there is a single edge of infinite length connecting components in same level, or a configuration of infinite length edges connecting different levels.  We treat the different cases in turn and their contributions to the map of Maurer-Cartan spaces. 

First consider a configuration $u$ with one level and an unquilted disk $u_{v_0}$
attached to a collection of quilted disks $u_{v_1},\ldots, u_{v_r}$ via a collection of broken edges $T_{e_1},\ldots, T_{e_r}$ passing through critical points of $f_L$.
The sum of such configurations is 
\begin{eqnarray*}
     \sum_{r,j_1,\ldots,j_r \ge 0} m_r(q_{j_1}(b_-,\ldots, b_-),\ldots, q_{j_r}(b_-,\ldots, b_-)) &=&
\sum_{r \ge 0} m_r(b_+,\ldots,b_+) \\ &=& m(b_+) .
\end{eqnarray*}

Next consider configurations $u$ with one level and a quilted disk $u_{v_1}$ attached
to an unquilted disk $u_{v_2}$  via a broken edge. These configurations contribute zero because of the Maurer-Cartan condition:
\begin{eqnarray*}
     & \sum_{r,d} q_{r-d+1}(b_-,\ldots, b_-,m_d(b_-,\ldots,b_-),b_-,\ldots, b_-)\\
     &= \sum_{r \ge 0} q_{r-d + 1}(b_-,\ldots,b_-,0,b_-,\ldots, b_-)= 0.
\end{eqnarray*}

The last two cases involve breaking off a level onto a cylindrical end of the configuration:  One the one hand, there are configurations $u$ with two levels with one level $u_1$ mapping to $X$  and one level $u_2$ mapping to $\R \times Z_+$ attached separated by a broken trajectory $u| T_e$.  On the other hand, there are configurations $u$ with two levels $u_1,u_2$ with one level  $u_1$ in $\R \times Z_-$ and one level $u_2$ in $X$ separated by a broken trajectory $u| T_e$ of $\zeta_\circ$.   Such configurations  are impossible since the level $u_1$ in $\R \times Z_-$ necessarily is collection of disks $u_v: S_v \to \R \times Z_-$, at least
one of which has positive area $A(u_v) > 0$ and so at least one puncture.  This contradicts 
\cite[Lemma \ref{I-1cons}]{BCSW1}. The same argument also excludes configurations in (c).

The signed count of the boundary points in the one-dimensional component of the moduli space  vanishes, for any energy bound.  It follows that 
\[ 0 =\sum_{u \in M_d^q(L,x_0,\ldots,x_d)} (-1)^{ \heartsuit } \left( \prod_{i=1}^d b_+(x_i)  \right) \wt(u)
x_0 = m(b_+)   \] 
as desired.
\end{proof}

\begin{theorem} \label{thm:chainh} If $b_+ \in MC(J_+,f_+)$ is mapped to $b_- \in MC(J_-,f_-)$ then 
the induced maps 
\[ \varphi(L,b_-),\varphi(L,b_+): CE(\Lambda_+, \hat{G}(L)) \to CE(\Lambda_-, \hat{G}(L)) \] 
are chain homotopic algebra maps, and similarly for the maps 
on the complex generated by Reeb chords only the maps
\[ \varphi_\white(L,b_+),\varphi_\white(L,b_-): CE_\white(\Lambda_+, \hat{G}(L)) \to CE_\white(\Lambda_-, \hat{G}(L)) \]
are chain homotopic.   Furthermore, the chain homotopy descends to the abelianization in 
\ref{def:ab}.
\end{theorem}

The proof is an argument using quilted moduli spaces.   In preparation for the proof, given an isotopy $(J_t,f_t), t \in [-1,1]$ as above we consider the moduli space $\M^{q,\white}_{d_-,1}(L)$ of quilted punctured disks in $X$ bounding $L$ of expected dimension one with a single incoming puncture $x_+$ at the convex end and some collection $\ul{x}_-$ of outgoing punctures at the concave end, together with a collection of inputs $\ul{x}_\black$ at the edges mapping to the interior of $L$.   

Boundary configurations of the one-dimensional components  
of the moduli spaces of quilted disks defining the homotopy operator are those configurations with some collection of edges with infinite length. 
The first possibility is that of configurations with two levels, connected
by infinite length trajectories, where the first level is quilted.
Let  $u = (u_1,u_2)$  where the first level $(C_1,u_1)$ is a quilted disk  mapping to $\R \times Z_-$ 
and the second level 
$(C_2,u_2)$ maps to $X$ as in Figure \ref{fig:varphib0}. Because the almost complex structure on $\R \times Z_-$ is translation-invariant, forgetting the quilting produces a stable unquilted treed disk 
$u: C \to \R \times Z$ in a moduli space of
negative expected dimension, unless the unquilted disk is a constant strip in which case it is unstable.

\begin{figure}[ht]
    \centering
    \scalebox{.4}{
\begingroup%
  \makeatletter%
  \providecommand\color[2][]{%
    \errmessage{(Inkscape) Color is used for the text in Inkscape, but the package 'color.sty' is not loaded}%
    \renewcommand\color[2][]{}%
  }%
  \providecommand\transparent[1]{%
    \errmessage{(Inkscape) Transparency is used (non-zero) for the text in Inkscape, but the package 'transparent.sty' is not loaded}%
    \renewcommand\transparent[1]{}%
  }%
  \providecommand\rotatebox[2]{#2}%
  \newcommand*\fsize{\dimexpr\f@size pt\relax}%
  \newcommand*\lineheight[1]{\fontsize{\fsize}{#1\fsize}\selectfont}%
  \ifx\svgwidth\undefined%
    \setlength{\unitlength}{843.9941214bp}%
    \ifx\svgscale\undefined%
      \relax%
    \else%
      \setlength{\unitlength}{\unitlength * \real{\svgscale}}%
    \fi%
  \else%
    \setlength{\unitlength}{\svgwidth}%
  \fi%
  \global\let\svgwidth\undefined%
  \global\let\svgscale\undefined%
  \makeatother%
  \begin{picture}(1,0.34827704)%
    \lineheight{1}%
    \setlength\tabcolsep{0pt}%
    \put(0,0){\includegraphics[width=\unitlength,page=1]{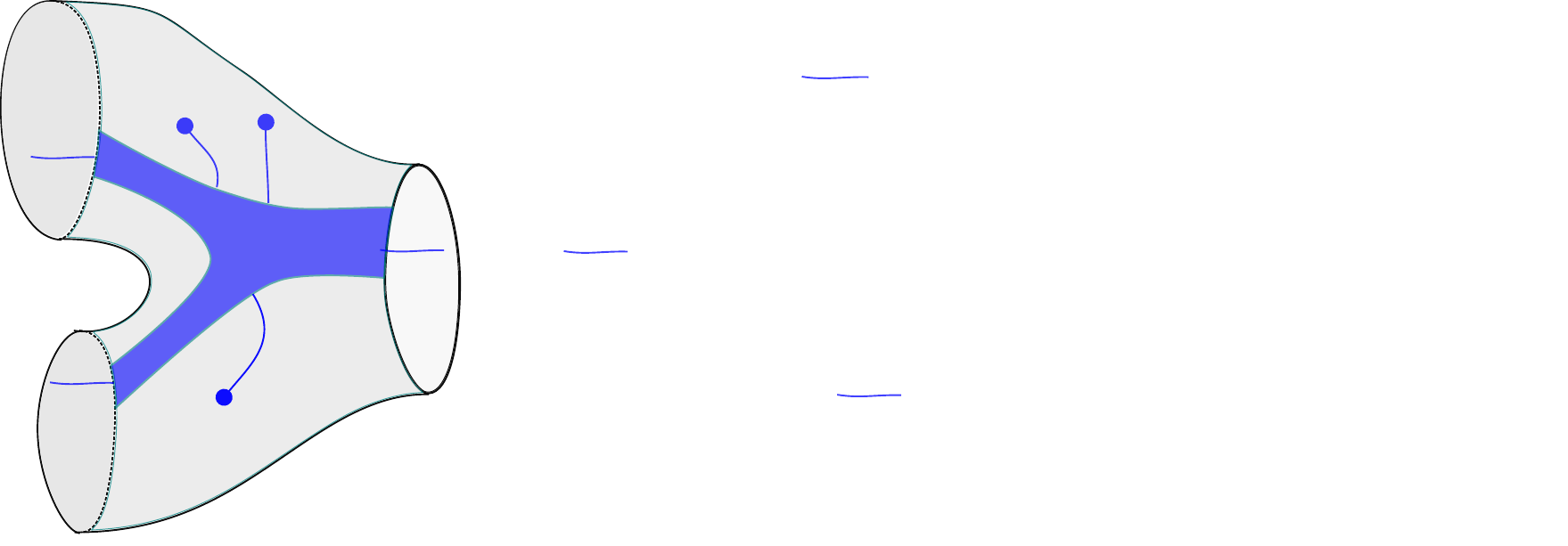}}%
    \put(0.16107261,0.28657637){\color[rgb]{0,0,0}\makebox(0,0)[lt]{\lineheight{0}\smash{\begin{tabular}[t]{l}$b_-$\end{tabular}}}}%
    \put(0.10362278,0.28753386){\color[rgb]{0,0,0}\makebox(0,0)[lt]{\lineheight{0}\smash{\begin{tabular}[t]{l}$b_-$\end{tabular}}}}%
    \put(0.12599656,0.08044259){\color[rgb]{0,0,0}\makebox(0,0)[lt]{\lineheight{0}\smash{\begin{tabular}[t]{l}$b_-$\end{tabular}}}}%
    \put(0,0){\includegraphics[width=\unitlength,page=2]{obs78.pdf}}%
    \put(0.83939603,0.26928805){\color[rgb]{0,0,0}\makebox(0,0)[lt]{\lineheight{0}\smash{\begin{tabular}[t]{l}$b_-$\end{tabular}}}}%
    \put(0.758257,0.27086896){\color[rgb]{0,0,0}\makebox(0,0)[lt]{\lineheight{0}\smash{\begin{tabular}[t]{l}$b_-$\end{tabular}}}}%
    \put(0.7912286,0.03198424){\color[rgb]{0,0,0}\makebox(0,0)[lt]{\lineheight{0}\smash{\begin{tabular}[t]{l}$b_-$\end{tabular}}}}%
    \put(0,0){\includegraphics[width=\unitlength,page=3]{obs78.pdf}}%
  \end{picture}%
\endgroup%
}
    \caption{Contributions to $\varphi(L,b_-)$ (left) and $\varphi(L,b_+)$ (right)}
    \label{fig:varphib0}
\end{figure}

The second possibility is that the second level is quilted. 
Let  $(C,u)$ be a configuration with two levels $(C_1,u_1)$ and $(C_2,u_2)$
where  $(C_1,u_1)$ is a $(J_+,f_+)$-holomorphic map to  $X$ and 
$(C_2,u_2)$ is a collection of quilted disks and spheres in $\R \times Z_+$ as in Figure \ref{fig:varphib0}.  Again, forgetting the quiltings produces a configuration in a moduli space of lower expected dimension unless the quilted disks are constant.  

A third possibility is that  of a configuration with a single level, separated
by an infinite length trajectory.
Configurations $(C,u)$ consisting of a quilted disk $(C_0.u_0)$ mapping to $X$ connecting
to a  unquilted disk $(C_1,u_1)$ mapping to $X$  with edges labelled $b_-$.

 Finally, there are configurations with unquilted disks bubbling off the ends.
 Configurations  $(C,u)$ consisting of a quilted disk $(C_0,u_0)$ mapping to $X$ connected to an unquilted disk $(C_1,u_1)$ mapping to  $\R \times Z_\pm$.

In order to define the chain homotopy claimed in the theorem,  we 
also need moduli spaces of unions of disks.    The necessary cobordism cannot be simply the product of moduli spaces for the components, since this product will have dimension more than one.
 Instead, we need to take a fibered product.   Note that the boundary $\partial \M^{q,\white}_{d_-,1}(L)$ of $\M^{q,\white}_{d_-,1}(L)$ consists two distinguished components:  
\begin{enumerate} 
\item Let 
$\partial_+ \M^{q,\white}_{d_-,1}(L)$ denote the set of configurations in 
$\partial \M^{q,\white}_{d_-,1}(L)$ where the quilted components maps to the convex end $\R \times \Lambda_+$;
\item Let $\partial_- \M^{q,\white}_{d_-,1}(L)$ denote the set of configurations in $\partial \M^{q,\white}_{d_-,1}(L)$ where one of the quilted components maps to the concave end $\R \times \Lambda_-$.
\end{enumerate}
We say that a {\em good function} on  $\M^{q,\white}_{d_-,1}(L)$ is a smooth 
map 
\begin{equation} \label{eq:goodfun} f_{d_-}:  \ \M^{q,\white}_{d_-,1}(L) \to [-1,1] \end{equation} 
that takes value $-1$ on  $\partial_- \M^{q,\white}_{d_-,1}(L)$, value $+1$ on  $\partial_+ \M^{q,\white}_{d_-,1}(L)$, and 
takes values in $(-1,1)$ on the complement; we write $f = f_{d_-}$ for short.   In particular, 
the third type of boundary of $\M^{q,\white}_{d_-,1}$ has image a finite set in $(0,1)$ for any given energy bound.   It follows easily from the fact that  $\M^{q,\white}_{d_-,1}(L)$ is a compact one-manifold with boundary 
that such good functions exist.    The moduli spaces for finite unions of disks are now defined by 
\begin{equation} \label{eq:mql}
\M^{q,\white}(L) = \bigcup_{k=1}^{\infty} \bigcup_{d_{1,-}, \ldots, d_{k,-}} \prod_{i=1}^k \M^{q,\white}_{d_{1,-}}(L) 
\times_{[-1,1]} \ldots \times_{[-1,1]} \M^{q,\white}_{d_{k,-}}(L) \end{equation}
Here the fiber product ranges over the maps $f$ of \eqref{eq:goodfun}.
The boundary of this fiber product is not cut out transversally in general stratum-wise, because of repetition of the boundary components that map to $(0,1)$.  After a generic perturbation in $f$, both the boundary and interior of $\M^{q,\white}(L)$ is transversally cut out; these perturbations may be constructed inductively starting with the moduli spaces of expected dimension less than one so that the perturbations on each component $u_k, k = 1,2$ corresponding to splitting of types $\bGamma = \bGamma_1 \# \bGamma_2$  depend only on $u_k$.  By allowing multi-valued perturbations also may assume that if $\bGamma$ is disconnected, then the perturbations are invariant 
under permutation of the components.  The boundary components of $\M^{q,\white}(L)$ then consist of three types  $\partial_+ \M^{q,\white}(L)$, $\partial_- \M^{q,\white}(L), \partial_0 \M^{q,\white}(L)$ where
\begin{enumerate}
    \item $\partial_+ \M^{q,\white}(L)$ consists of configurations
    where the quilted components have moved onto the convex end, $\R \times \Lambda_+$, and is in bijection with the product of the moduli spaces $\M(L,J_+)$ for the almost complex structure $J_+$ (by removing the quilted components).
    \item $\partial_- \M^{q,\white}(L)$ consists of configurations
    where some of the quilted components move onto the concave end $\R \times \Lambda_-$, and is in bijection with the product of the moduli spaces $\M(L,J_+)$ for the almost complex structure $J_+$ with moduli spaces contributing to the map of moduli spaces $MC(J_-) \to MC(J_+)$;
    \item $\partial_0 \M^{q,\white}(L)$ consists of configurations where
     the quilted components lie on the cobordism $X$, and consists
     of configurations with two levels, one of which maps to $X$
     and the other maps to $\R \times Z_+$ or $\R \times Z_-$ (and so contributes to the differential for $\Lambda_+$ or $\Lambda_-$.)
\end{enumerate}

We use the third kind of count to define the homotopy operator. 
Let 
\[ \M^{q,\white}_{d_-,d_+,e}(L\ul{x}_-,\ul{x}_+,\ul{x}_\black)_0 \subset \M^{q,\white}(L) \] 
denote the moduli space of finite unions of rigid disks with 
limits at the ends
\[ \ul{x}_\pm = ( x_1,\ldots, x_{d_\pm} ) \in \cI(\Lambda_\pm)^{d_\pm} \] 
and limits
\[ \ul{x}_\black = ( x_{\black,1},\ldots, x_{\black,d_\black} ) \in \cI(L,\partial L)^{d_\black} \]   
along the edges in the interior of $L$.   Counts of 
rigid elements of the moduli space of finite unions of quilted
disks counted in \eqref{eq:mql} define a map 
\[ h(b_+):  CE(\Lambda_+,\hat{G}(L)) \to CE(\Lambda_-,\hat{G}(L)) .\]
by
\[ (h(b_+))(\ul{x}_-) = 
\sum_{\ul{x}, u \in \M^{q,\white}_{d_+,d_-,e}(\ul{x}_+,\ul{x}_-;\ul{x}_\black)_0 }
(-1)^{\heartsuit } \left( \prod_{i=1}^e b_+(x_{\black,i}) \right) \gamma(x_0) x_{-,1} \otimes \ldots \otimes x_{-,d_+} \]
as in Figure \ref{fig:homotopy}.    Invariance of the perturbations under permutation implies that the operator $h$ is well-defined
on the space of abelianized chains in Definition \ref{def:ab}.

\begin{figure}[ht]
    \centering
    \scalebox{.5}{
\begingroup%
  \makeatletter%
  \providecommand\color[2][]{%
    \errmessage{(Inkscape) Color is used for the text in Inkscape, but the package 'color.sty' is not loaded}%
    \renewcommand\color[2][]{}%
  }%
  \providecommand\transparent[1]{%
    \errmessage{(Inkscape) Transparency is used (non-zero) for the text in Inkscape, but the package 'transparent.sty' is not loaded}%
    \renewcommand\transparent[1]{}%
  }%
  \providecommand\rotatebox[2]{#2}%
  \newcommand*\fsize{\dimexpr\f@size pt\relax}%
  \newcommand*\lineheight[1]{\fontsize{\fsize}{#1\fsize}\selectfont}%
  \ifx\svgwidth\undefined%
    \setlength{\unitlength}{248.20522278bp}%
    \ifx\svgscale\undefined%
      \relax%
    \else%
      \setlength{\unitlength}{\unitlength * \real{\svgscale}}%
    \fi%
  \else%
    \setlength{\unitlength}{\svgwidth}%
  \fi%
  \global\let\svgwidth\undefined%
  \global\let\svgscale\undefined%
  \makeatother%
  \begin{picture}(1,1.15974555)%
    \lineheight{1}%
    \setlength\tabcolsep{0pt}%
    \put(0,0){\includegraphics[width=\unitlength,page=1]{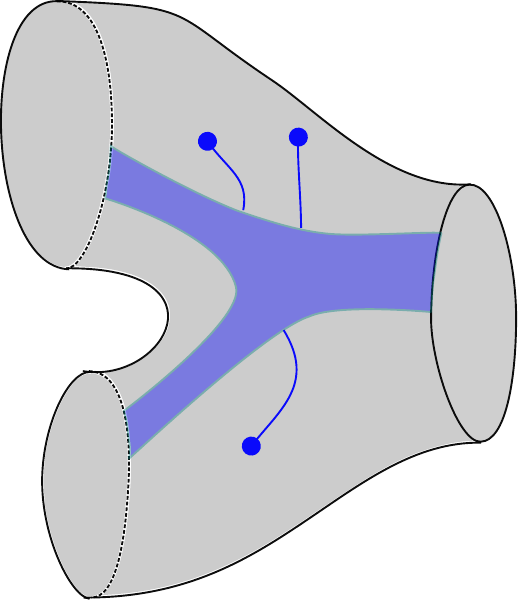}}%
    \put(0.54770942,0.94993932){\color[rgb]{0,0,0}\makebox(0,0)[lt]{\lineheight{0}\smash{\begin{tabular}[t]{l}b\end{tabular}}}}%
    \put(0.35235768,0.95319517){\color[rgb]{0,0,0}\makebox(0,0)[lt]{\lineheight{0}\smash{\begin{tabular}[t]{l}b\end{tabular}}}}%
    \put(0.42843722,0.24900444){\color[rgb]{0,0,0}\makebox(0,0)[lt]{\lineheight{0}\smash{\begin{tabular}[t]{l}b\end{tabular}}}}%
    \put(0,0){\includegraphics[width=\unitlength,page=2]{obs6.pdf}}%
  \end{picture}%
\endgroup%
}
    \caption{Disks defining the homotopy operator}
    \label{fig:homotopy}
\end{figure}

\begin{proof}[Proof of Theorem \ref{thm:chainh}]
Boundary components $\partial_\pm \M^{q,\white}(L)$ 
contribute the maps $\varphi(L,b_\pm)$ while 
the third type of boundary contributes 
\[  h(b_+) \delta_+ + \delta_- h(b_+) .\]
See Figure \ref{fig:deltaplus} and Figure \ref{fig:deltaminus}. The justification of signs is similar to that for the 
square-zero property of the differential.

\begin{figure}[ht]
    \centering
    \scalebox{.5}{
\begingroup%
  \makeatletter%
  \providecommand\color[2][]{%
    \errmessage{(Inkscape) Color is used for the text in Inkscape, but the package 'color.sty' is not loaded}%
    \renewcommand\color[2][]{}%
  }%
  \providecommand\transparent[1]{%
    \errmessage{(Inkscape) Transparency is used (non-zero) for the text in Inkscape, but the package 'transparent.sty' is not loaded}%
    \renewcommand\transparent[1]{}%
  }%
  \providecommand\rotatebox[2]{#2}%
  \newcommand*\fsize{\dimexpr\f@size pt\relax}%
  \newcommand*\lineheight[1]{\fontsize{\fsize}{#1\fsize}\selectfont}%
  \ifx\svgwidth\undefined%
    \setlength{\unitlength}{359.97917356bp}%
    \ifx\svgscale\undefined%
      \relax%
    \else%
      \setlength{\unitlength}{\unitlength * \real{\svgscale}}%
    \fi%
  \else%
    \setlength{\unitlength}{\svgwidth}%
  \fi%
  \global\let\svgwidth\undefined%
  \global\let\svgscale\undefined%
  \makeatother%
  \begin{picture}(1,0.81293685)%
    \lineheight{1}%
    \setlength\tabcolsep{0pt}%
    \put(0,0){\includegraphics[width=\unitlength,page=1]{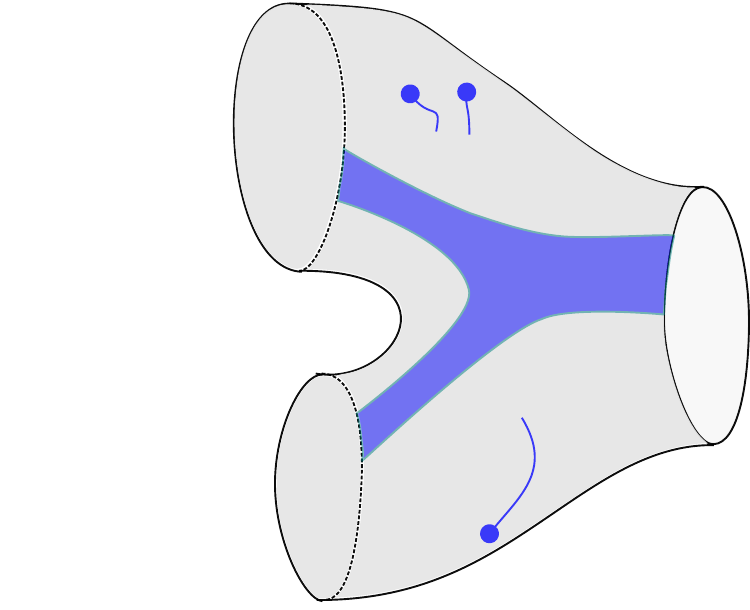}}%
    \put(0.54431819,0.08930125){\color[rgb]{0,0,0}\makebox(0,0)[lt]{\lineheight{0}\smash{\begin{tabular}[t]{l}$b_-$\end{tabular}}}}%
    \put(0.44592056,0.73513133){\color[rgb]{0,0,0}\makebox(0,0)[lt]{\lineheight{0}\smash{\begin{tabular}[t]{l}$b_-$\end{tabular}}}}%
    \put(0.54215477,0.73128199){\color[rgb]{0,0,0}\makebox(0,0)[lt]{\lineheight{0}\smash{\begin{tabular}[t]{l}$b_-$\end{tabular}}}}%
    \put(0,0){\includegraphics[width=\unitlength,page=2]{obs10.pdf}}%
  \end{picture}%
\endgroup%
}
    \caption{Contributions to $\delta_- h(b_-) $}
    \label{fig:deltaplus}
\end{figure}

\begin{figure}[ht]
    \centering
    \scalebox{.5}{
\begingroup%
  \makeatletter%
  \providecommand\color[2][]{%
    \errmessage{(Inkscape) Color is used for the text in Inkscape, but the package 'color.sty' is not loaded}%
    \renewcommand\color[2][]{}%
  }%
  \providecommand\transparent[1]{%
    \errmessage{(Inkscape) Transparency is used (non-zero) for the text in Inkscape, but the package 'transparent.sty' is not loaded}%
    \renewcommand\transparent[1]{}%
  }%
  \providecommand\rotatebox[2]{#2}%
  \newcommand*\fsize{\dimexpr\f@size pt\relax}%
  \newcommand*\lineheight[1]{\fontsize{\fsize}{#1\fsize}\selectfont}%
  \ifx\svgwidth\undefined%
    \setlength{\unitlength}{364.76408711bp}%
    \ifx\svgscale\undefined%
      \relax%
    \else%
      \setlength{\unitlength}{\unitlength * \real{\svgscale}}%
    \fi%
  \else%
    \setlength{\unitlength}{\svgwidth}%
  \fi%
  \global\let\svgwidth\undefined%
  \global\let\svgscale\undefined%
  \makeatother%
  \begin{picture}(1,0.78915363)%
    \lineheight{1}%
    \setlength\tabcolsep{0pt}%
    \put(0,0){\includegraphics[width=\unitlength,page=1]{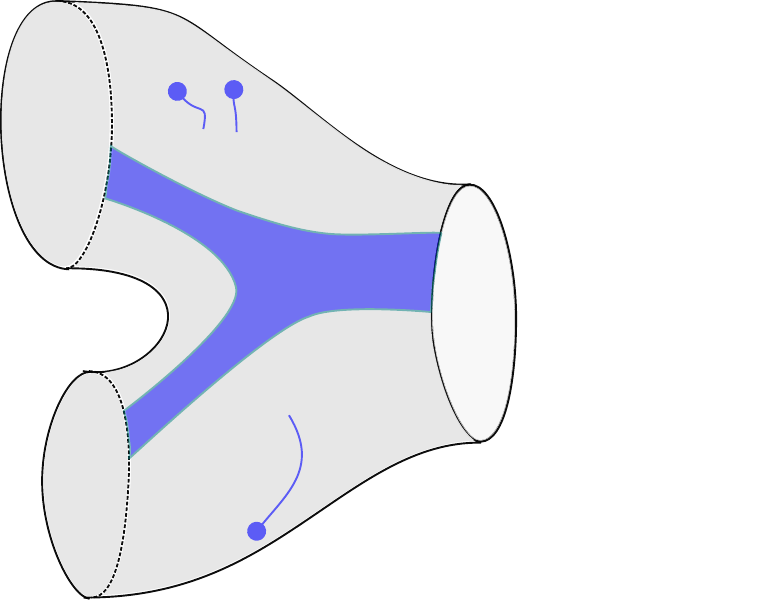}}%
    \put(0.23203202,0.07467703){\color[rgb]{0,0,0}\makebox(0,0)[lt]{\lineheight{0}\smash{\begin{tabular}[t]{l}$b_-$\end{tabular}}}}%
    \put(0.13710038,0.71653696){\color[rgb]{0,0,0}\makebox(0,0)[lt]{\lineheight{0}\smash{\begin{tabular}[t]{l}$b_-$\end{tabular}}}}%
    \put(0.26050749,0.70331476){\color[rgb]{0,0,0}\makebox(0,0)[lt]{\lineheight{0}\smash{\begin{tabular}[t]{l}$b_-$\end{tabular}}}}%
    \put(0,0){\includegraphics[width=\unitlength,page=2]{obs11.pdf}}%
  \end{picture}%
\endgroup%
}
    \caption{Contributions to $h(b_-) \delta_-$}
    \label{fig:deltaminus}
\end{figure}
 Putting everything together, we obtain 
\[ \varphi(L,b_+) - \varphi(L,b_-) = h(b_-) \delta_- + \delta_+ h(b_-) .\]
\end{proof}

\begin{remark}
    With some more work, one 
    should be able to arrange that $h$ is a homotopy of 
    dg algebra maps as in Ekholm-Honda-Kalman \cite[Lemma 3.13]{ehk}, Kalman \cite[Table 1]{kalman}. Namely, 
    for each energy bound one may  break up the interval $[-1,1]$ into subintervals $[t_i,t_{i+1}]$ such that each subinterval contains the image of at most one element of the space of quilted disks, we may assume that there is a single configuration contributing to the chain homotopy.   In this configuration, which is a disjoint union of quilted disks, there is a single quilted disk which is in a moduli space of expected dimension zero, and the remaining quilted disks in the union are in a moduli space of expected dimension one.  The count of disks of the latter type are independent of the choice of $t \in [t_i, t_{i+1}]$, and one obtains a dga homotopy between the chain maps associate to $t_{i}$ and $t_{i+1}$.  The required map from $\varphi(L,b_+)$ and $\varphi(L,b_-)$ is defined by composing the given homotopies.  See Seidel \cite[Chapter 10]{se:bo} and Ma'u-Wehrheim-Woodward \cite[Chapter 7]{ainfty} for the corresponding results in the \ainfty context.  
\end{remark}

\begin{lemma}  \label{deformrem} 
Let $\Lambda_\rho$
be a family of Legendrians in a family  of stable Hamiltonian manifolds $(Z,\alpha_\rho,\omega)$ for $\rho \in [0,1]$ so that 
 $\Pi = p(\Lambda_0)$ is a monotone Lagrangian in $Y$. 
Then for suitable choices of almost complex structure the differential algebra $CE(\Lambda_\rho)$  is equal to $ CE(\Lambda)$ for $\rho$ sufficiently small, with coefficients in the completion 
$\hat{G}(\Lambda_\rho)$ which contains $G(\Lambda_0)$.  In particular, any tame filling of $\Lambda_\rho$
defines an augmentation for $CE(\Lambda)$.
\end{lemma}
 
\begin{proof} 
For any fixed number of punctures, the number of punctured disks $u: C \to \R \times Z$ bounding $\R \times \Lambda$
is finite as the completion in \eqref{cglam} is not necessary.   
 Given an almost complex
structure $J$ taming $\omega_0$, $J$ also tames $\omega_\rho$ for $\rho$
sufficiently small.  The holomorphic disks bounding $\Lambda_\rho$ which define the differential on $CF(\Lambda_\rho)$ are in bijection with disks bounding $\Lambda_0$,
which define the differential on $CF(\Lambda_0)$, by transversality.  
\end{proof}

\begin{remark}  The homotopy type of $CE(\Lambda_\rho)$ is not independent of $\rho$, because it is only defined over a completion which depends on $\Lambda_\rho$.  The content of the Lemma is that
since $\Lambda_0$ is monotone, the completion is not necessary 
and $CE(\Lambda_\rho)$ is the tensor product of the uncompleted
algebra $CE(\Lambda_0)$ with a suitable completed coefficient ring.
\end{remark}

We have in mind for applications of the Lemma the example especially of the Clifford Legendrian $\Lambda$.  In this case, any filling of a perturbation of $\Lambda$ (for example,
the intersection $L \cap Z$ of the Harvey-Lawson filling $L$ with $Z = S^{2n-1}$) defines
an augmentation of $CE(\Lambda)$.

\begin{example}  Continuing Example \ref{hopf2}, the two fillings
  $L_{(2),\pm} \subset \C^n$ obtained from paths $\gamma_\pm : \R \to \R^2$ above respectively below the
  critical value of the projection are not isotopic through exact
  fillings.  Indeed, since the generators $\cc_{12}, \cc_{21}$ are
  closed, existence of an isotopy would imply that the values
  of the augmentations $\varphi(L_{(2),\pm})$ on $\cc_{12}, \cc_{21}$
  are equal, which is not the case (having values $n,1$ and $1,n$
  respectively.)    In the case $n = 2$, the non-existence of an isotopy is the local version of
  the statement that the Chekanov and Clifford tori are not
  Hamiltonian isotopic as in, for example, Vianna \cite{vianna:inf}.  That is, existence of a local isotopy would
  imply existence of a global isotopy. 
\end{example}

\begin{lemma}\label{lem:isotope}   Suppose that $(X,\omega_s), s \in [0,1]$ is family of symplectic cobordisms 
with convex end $(Z_+,\alpha_+)$ and concave end $(Z_-,\alpha_-)$, so that $\frac{\d}{\d s} \omega_s$ is supported in a compact subset of the complement of the boundary, and $L_s \subset X_s$
is a family of Lagrangian cobordisms with concave end $\Lambda_+ \subset Z_+$
and convex end $\Lambda_- \subset Z_-$.    There exists a symplectomorphisms $\psi_s: (X,\omega_0) \to (X,\omega_s)$ with $\psi_s(L_0) = L_s$. 
\end{lemma}

 \begin{proof} First choose a diffeomorphism $\kappa: X \to X$ so that $\kappa(L_0) = L_s$ and $\kappa$ is equal to the identity on $Z_\pm$;  explicitly, $\kappa$ is the flow of a vector field on the submanifold$\cup_s( \{ s \} \times L_s) \subset \R\times X$ that projects to $\dds$ on $[0,1]$.  
 After replacing $L_s$ with $\kappa_s(L_s)$ an $\omega_s$ with $\kappa_{s,*} \omega_s$, 
 we may assume that $L_s$ is independent of $s$.  The variation $\dds \omega_s$ is exact, since it vanishes on $Z_\pm \cup L$, and so  $\dds \omega_s = \d \beta_s$ for some family of one-forms $\beta_s$ that vanishes on $L$.    Moser isotopy then produces a  family of diffeomorphisms $\psi_s: X \to X$ so that $\psi_s^* \omega_s = \omega_0$.
\end{proof}
 
As a result, we may treat an isotopy of Lagrangian cobordisms as a variation in 
perturbation data, by replacing a given almost complex structure by its pull-back under
the diffeomorphism in Lemma \ref{lem:isotope}.

\subsection{The composition theorem}

According to the philosophy of topological quantum field theory, chain maps associated
to cobordisms should satisfy a composition-equals-gluing axiom.  For cobordism maps associated
to tame cobordisms equipped with bounding chains we show that for a natural operation on 
bounding chains (the formal sum in the limit of long neck length), the composition of the cobordism maps is the cobordism map for the composition.

\begin{theorem} \label{composition}
Suppose $(X',L')$ and $(X'',L'')$ are positive composable tame cobordism
pairs equipped with bounding chains $b' \in MC(L'), b'' \in MC(L'')$.   Denote the cobordism maps
\[ {\varphi}(L',b'):  HE(\Lambda_+') \to HE(\Lambda_-'), 
\quad  {\varphi}(L'',b''):  HE(\Lambda_+'') \to HE(\Lambda_-'') \] 
and $X = X'' \circ X'$ is the composed cobordism then there exists a map 
\begin{equation} \label{mcmap} \circ:  MC(L') \times MC(L'') \to MC(L) \end{equation}
such that
 \begin{equation} \label{complaw}
 [\varphi({L'' \circ L'},b'' \circ b')]  = [\varphi({L''},b'')] \circ [\varphi({L'},b')] . \end{equation}
Furthermore, the cobordism map associated to the trivial cobordism is the identity:
\[ \varphi({\R \times \Lambda},0) = \on{Id}_{CE(\Lambda)} .\]
\end{theorem}

\begin{proof}   We first construct the composition map \eqref{mcmap}.
It suffices to construct the map for the complex structure on $X$ in the neck
stretching limit in which $X$ approaches the broken symplectic manifold $X = X' \cup X''$ glued along $Y'_+ = Y''_-$.  In this limit, holomorphic curves in $X$ are replaced by  buildings in $X = X' \cup X''$, and the Morse function $f_L$ on $L$ is equal to 
the Morse functions $f_{L'},f_{L''}$ on $L',L''$.  The critical locus is the union
\[ \crit(f_L) = \crit(f_{L'}) \cup \crit(f_{L''}) . \]
With respect to this splitting, define $b'' \circ b'$ as the formal sum of $b''$ and $b'$. 

 We prove the composition law \eqref{complaw}.  Let $u: C \to X$ be a building contributing to $\varphi({L'' \circ L'},b'' \circ b')$, necessarily consisting of two levels $u',u''$.
Each of $u',u''$ have a single incoming puncture.
Indeed,  otherwise there would be a component of $u'$
with no incoming puncture, which is impossible 
by \cite[Lemma \ref{I-1cons}]{BCSW1}. Let $v=(v',v'')$ denote the component of $u$ which corresponds to the bounding chain insertions. The sum over all treed disks $u'$
with $x$ as output and remaining leaves labelled by 
$u'$ vanishes, being the coefficient of $x$ in  $m(b').$
Thus, the only contributions arise from configurations
$u=(u',u'')$ where both $u',u''$ have outputs in the interior of their corresponding levels
as in Figure \ref{fig:compose}. 
\begin{figure}[ht]
    \centering
    \scalebox{.5}{
\begingroup%
  \makeatletter%
  \providecommand\color[2][]{%
    \errmessage{(Inkscape) Color is used for the text in Inkscape, but the package 'color.sty' is not loaded}%
    \renewcommand\color[2][]{}%
  }%
  \providecommand\transparent[1]{%
    \errmessage{(Inkscape) Transparency is used (non-zero) for the text in Inkscape, but the package 'transparent.sty' is not loaded}%
    \renewcommand\transparent[1]{}%
  }%
  \providecommand\rotatebox[2]{#2}%
  \newcommand*\fsize{\dimexpr\f@size pt\relax}%
  \newcommand*\lineheight[1]{\fontsize{\fsize}{#1\fsize}\selectfont}%
  \ifx\svgwidth\undefined%
    \setlength{\unitlength}{626.77473414bp}%
    \ifx\svgscale\undefined%
      \relax%
    \else%
      \setlength{\unitlength}{\unitlength * \real{\svgscale}}%
    \fi%
  \else%
    \setlength{\unitlength}{\svgwidth}%
  \fi%
  \global\let\svgwidth\undefined%
  \global\let\svgscale\undefined%
  \makeatother%
  \begin{picture}(1,0.61523038)%
    \lineheight{1}%
    \setlength\tabcolsep{0pt}%
    \put(0,0){\includegraphics[width=\unitlength,page=1]{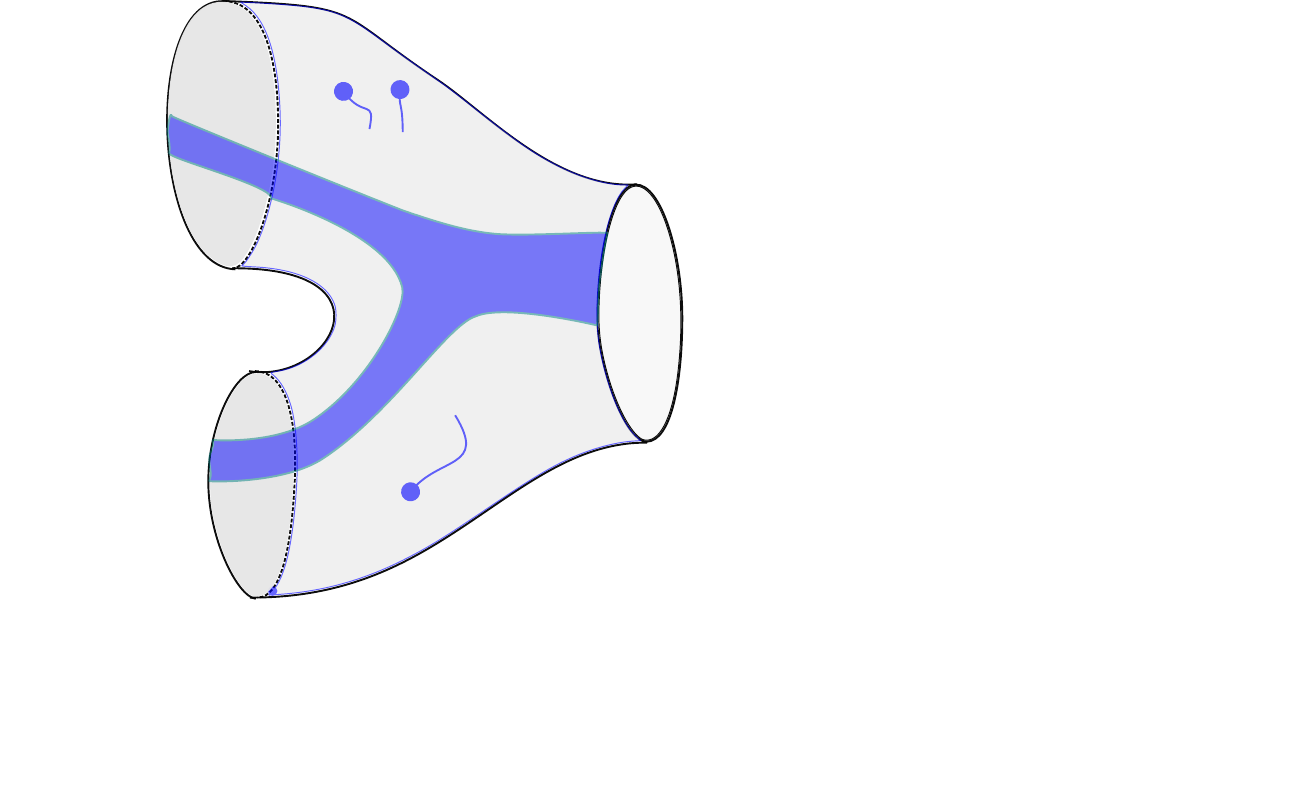}}%
    \put(0.28889433,0.56342647){\color[rgb]{0,0,0}\makebox(0,0)[lt]{\lineheight{0}\smash{\begin{tabular}[t]{l}b''\end{tabular}}}}%
    \put(0.23447306,0.56575846){\color[rgb]{0,0,0}\makebox(0,0)[lt]{\lineheight{0}\smash{\begin{tabular}[t]{l}b''\end{tabular}}}}%
    \put(0.28545429,0.22433587){\color[rgb]{0,0,0}\makebox(0,0)[lt]{\lineheight{0}\smash{\begin{tabular}[t]{l}b''\end{tabular}}}}%
    \put(0,0){\includegraphics[width=\unitlength,page=2]{obs12.pdf}}%
    \put(0.72802479,0.42215261){\color[rgb]{0,0,0}\makebox(0,0)[lt]{\lineheight{0}\smash{\begin{tabular}[t]{l}b'\end{tabular}}}}%
    \put(0.68649552,0.41837202){\color[rgb]{0,0,0}\makebox(0,0)[lt]{\lineheight{0}\smash{\begin{tabular}[t]{l}b'\end{tabular}}}}%
    \put(0.73664134,0.08296503){\color[rgb]{0,0,0}\makebox(0,0)[lt]{\lineheight{0}\smash{\begin{tabular}[t]{l}b'\end{tabular}}}}%
    \put(0,0){\includegraphics[width=\unitlength,page=3]{obs12.pdf}}%
  \end{picture}%
\endgroup%
}
    \caption{Composition of bounding chains}
    \label{fig:compose}
\end{figure}
Furthermore, the tameness assumptions guarantee that each disk
has at least one incoming puncture as in \cite[Corollary \ref{I-onepos}]{BCSW1}. Thus if $b =  b'' \circ b'$ then 
\[ m(b) = m(b') + m(b'') = 0 .\]
The same arguments as before show that the map defined by counts of buildings
in $X$ and counts of maps in $X$ for some modified $b$ define chain-homotopic maps 
from $CE(\Lambda,\hat{G}(L))$ to $CE(\Lambda'',\hat{G}(L))$, and the claim follows. 
\end{proof} 

\subsection{Examples of unobstructed fillings}

We wish to show the the fillings of Harvey-Lawson type are unobstructed and so define augmentations of the corresponding Chekanov-Eliashberg algebras.  The existence of these fillings will be used to compute the augmentation varieties in various examples.  Specifically, the computations will be used to give lower bounds for dimensions of augmentation varieties in the examples where there exist unobstructed fillings.  

\begin{lemma} \label{lem:hl2} The Harvey-Lawson filling
  $L \cong \R^2 \times (S^1)^{n-2} \subset \C^n$ in Example \ref{ex:HLfilling} is unobstructed.
\end{lemma}

\begin{proof} 
We claim that the zero chain is a bounding chain for the standard complex structure and a judicious choice of Morse function. Consider a Morse function $f: L \to \R$ with the property that any gradient trajectory  $\gamma: [0,\infty) \to L$ starting from the
boundary of a holomorphic disk $u: C \to X$ flows to infinity in the sense
that 
\[ \inf |\gamma(t) | \ge 1/2, \quad \lim_{t \to \infty} |\gamma(t)| = \infty . \] 
Fix a coordinate $z_1$ on the $\R^2 \cong \C$
factor in $L$ so that the basic disks map to $z_1 = 0$, by the classification in 
Lemma \ref{lem:allmult}.
A particular
function with the required property is 
\[ f(z_1) =  \frac{(|z_1|^2 - c^2)^2 } { ( |z_1|^2 - c^2)^2
+ 1 } .
\] 
Any generic perturbation of $f$ is Morse and still has the property that gradient trajectories meeting $z_1 = 0$ flow to infinity.   By the choice of Morse function, 
there are no configurations in which a holomorphic disk is connected to a critical
point by a gradient trajectory.
\end{proof}

 Next we generalize the filling of the Hopf Legendrian in \eqref{L2} 
to fillings of Legendrians with two components.    First, we consider the following one-dimensional situation.    

\begin{definition}  A {\em matching} is a locally compact one-manifold $\Upsilon \subset \R \times S^1$ which is cylindrical near infinity  in the sense that 
\begin{eqnarray*}  \Upsilon \cap ( [T - \eps ,T] \times S^1) &=& [T - \eps, T] \times \Upsilon_+ \\ 
  \Upsilon \cap ( [-T,-T + \eps] \times S^1) &=&  [-T,-T+\eps] \times \Upsilon_-
 \end{eqnarray*}
for some finite sets $\Upsilon_\pm$ and any large enough $T>0$, as shown in Figure \ref{Upsilon}.   
\end{definition}

\begin{figure}[ht]
    \centering
    \scalebox{.5}{
\begingroup%
  \makeatletter%
  \providecommand\color[2][]{%
    \errmessage{(Inkscape) Color is used for the text in Inkscape, but the package 'color.sty' is not loaded}%
    \renewcommand\color[2][]{}%
  }%
  \providecommand\transparent[1]{%
    \errmessage{(Inkscape) Transparency is used (non-zero) for the text in Inkscape, but the package 'transparent.sty' is not loaded}%
    \renewcommand\transparent[1]{}%
  }%
  \providecommand\rotatebox[2]{#2}%
  \newcommand*\fsize{\dimexpr\f@size pt\relax}%
  \newcommand*\lineheight[1]{\fontsize{\fsize}{#1\fsize}\selectfont}%
  \ifx\svgwidth\undefined%
    \setlength{\unitlength}{295.5410286bp}%
    \ifx\svgscale\undefined%
      \relax%
    \else%
      \setlength{\unitlength}{\unitlength * \real{\svgscale}}%
    \fi%
  \else%
    \setlength{\unitlength}{\svgwidth}%
  \fi%
  \global\let\svgwidth\undefined%
  \global\let\svgscale\undefined%
  \makeatother%
  \begin{picture}(1,0.66348476)%
    \lineheight{1}%
    \setlength\tabcolsep{0pt}%
    \put(0,0){\includegraphics[width=\unitlength,page=1]{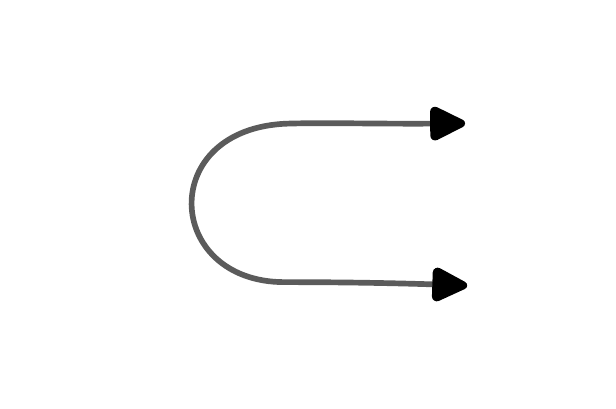}}%
    \put(0.3799854,0.47581024){\color[rgb]{0,0,0}\transparent{0.64502198}\makebox(0,0)[lt]{\lineheight{1.25}\smash{\begin{tabular}[t]{l}$\Upsilon$\end{tabular}}}}%
    \put(0,0){\includegraphics[width=\unitlength,page=2]{Upsilon.pdf}}%
  \end{picture}%
\endgroup%
}
    \caption{A one-dimensional Lagrangian with cylindrical ends}
    \label{Upsilon}
\end{figure}

Let $Z$ be a tame circle-fibered contact manifold and $\R \times Z$ the symplectization
equipped with symplectic form $\d(e^s \alpha)$. 

\begin{definition}  Given a compact Legendrian $\Lambda \subset Z$ and a matching 
$\Upsilon$, the \textit{matching cobordism}  is the image of  $\Upsilon \times \Lambda$
under the action map $(\R \times S^1) \times Z \to Z$, denoted
\[\Upsilon\Lambda = \Big\{ (r,e^{i\theta}\cdot z) \Big| z\in \Lambda, \;\; (r,e^{i\theta}) \in \Upsilon \Big\} .\]
\end{definition}

\begin{lemma} \label{upslem} The submanifold $\Upsilon \Lambda \subset X$ is a Lagrangian cobordism 
from $\Upsilon_- \Lambda$ to $\Upsilon_+ \Lambda$.   If $Y$ has integral symplectic class 
and $\Lambda_+$ is empty then $L$ is tame.  If $\Upsilon$ is simply connected then $L$ is unobstructed.
\end{lemma}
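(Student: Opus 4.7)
The plan is to prove the three assertions of the lemma separately.

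\emph{Lagrangian cobordism.} I would decompose a tangent vector to $\Upsilon\Lambda$ at a point $(r,e^{i\theta})\cdot z$ into a component $v_\Lambda$ arising from varying $z\in\Lambda$ and a component $v_\Upsilon$ given by the infinitesimal $\R\times S^1$-action along $T_{(r,e^{i\theta})}\Upsilon$. Writing $\omega_X=d(e^s\alpha)=e^s(ds\wedge\alpha+p^*\omega_Y)$, the diagonal pairings and mixed pairings all vanish: on $v_\Upsilon,v_\Upsilon'$ by one-dimensionality of $\Upsilon$ in $(\R\times S^1,ds\wedge d\theta)$; on $v_\Lambda,v_\Lambda'$ from $\alpha|_\Lambda=0$ (Legendrian) and $\omega_Y|_\Pi=0$ (Lagrangian projection); and mixed because the action vectors are $p$-vertical while $v_\Lambda$ is $\alpha$-horizontal. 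A dimension count $\dim(\Upsilon\Lambda)=1+\dim\Lambda=\dim X/2$ confirms the Lagrangian condition, and the assumed cylindrical structure of $\Upsilon$ near $\pm\infty$ directly gives $\Upsilon\Lambda\cap([\pm T, \pm T\mp \eps]\times Z_\pm)=[\pm T,\pm T\mp\eps]\times(\Upsilon_\pm\Lambda)$, identifying the ends.

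\emph{Tameness.} For the second claim, I would verify each condition of \cite[Definition \ref{I-tamedL}]{BCSW1}. Integrality of $[\omega_Y]$ produces a Donaldson-type divisor at infinity in $\overline{X}$ whose positive normal bundle enforces positive intersection numbers with every holomorphic curve; the hypothesis $\Lambda_+=\emptyset$ eliminates the positive end, so the outgoing-cap condition is vacuous. Area and energy bounds are inherited from $\Pi=p(\Upsilon\Lambda)\subset Y$ via the projection $p$, since the Maslov class and symplectic area of a disk bounding $\Upsilon\Lambda$ equal those of its projection to $\Pi$ up to the angle corrections at Reeb punctures already controlled in \cite[Lemma \ref{I-anglechange}]{BCSW1}.

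\emph{Unobstructedness.} The key observation is that simple connectivity of $\Upsilon$ makes $\Upsilon\Lambda$ exact for the Liouville form $e^s\alpha$. Indeed, $e^s\alpha|_{\Upsilon\Lambda}$ is pulled back from $e^s\,d\theta|_\Upsilon$ on $\Upsilon$, which is closed by Lagrangianity of $\Upsilon$ in $(\R\times S^1,ds\wedge d\theta)$, hence exact on any simply connected $\Upsilon$. I would then take $b=0$ as the candidate Maurer--Cartan element and show that the count $m_0$ of rigid treed disks of obstruction type bounding $\Upsilon\Lambda$ with no inputs vanishes. Consider such a disk $u:C\to X$; its projection $u_Y=p\circ u$ bounds $\Pi\subset Y$. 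If $u_Y$ is constant, then $u$ lies in a single $(\R\times S^1)$-fiber meeting $\Upsilon\Lambda$ in $\Upsilon$, and by exactness of $\Upsilon$ in this fiber $u$ must itself be constant, hence not rigid. If $u_Y$ is non-constant, Theorem \ref{liftthm} asserts that lifts of $u_Y$ form a one-parameter family modulo the $\R^*$-scaling action; the exact primitive of $e^s\,d\theta|_\Upsilon$ allows one to parametrize lifts bounding $\Upsilon\Lambda$ (rather than $\R\times\Lambda$) by translation along $\Upsilon$, producing a one-parameter family that precludes rigidity. A suitable choice of Morse function on $L$, as in the proofs of Lemmas \ref{hl2} and \ref{vfill}, ensures that gradient trajectories from the boundary of any holomorphic disk escape to infinity rather than terminate at critical points, so no Morse flow line can give an output either.

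\emph{Expected obstacle.} The main technical difficulty is handling rigid disks with a single outgoing puncture at a Reeb chord of $\Lambda_\pm$, since exactness of $\Upsilon\Lambda$ does not by itself forbid positive area when punctures are present: the puncture contributes action and Stokes gives no immediate contradiction. The argument must therefore combine exactness (to kill fiberwise components) with a family argument (translation along $\Upsilon$) to eliminate non-fiberwise configurations, and with the Morse function choice (to eliminate terminations at critical points of $f_L$). This parallels the explicit treatment of the filling $L_{(2)}$ following \eqref{L2} and the choices made in Lemmas \ref{hl2} and \ref{vfill}, and is expected to go through verbatim with the primitive of $e^s\,d\theta|_\Upsilon$ playing the role of the action functional.
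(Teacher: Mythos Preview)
Your treatment of the Lagrangian-cobordism and tameness parts is essentially the same as the paper's, though the paper dispatches tameness more tersely by directly checking $c_1(\ol{X})-[Y_-]^\dual=p^*c_1(Y)$ for the relevant condition and noting the others are trivial.

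For unobstructedness you correctly isolate exactness of $L$ as the key point, and your derivation of it (via pullback of $e^s\,d\theta$ from $\Upsilon$) is correct and equivalent to the paper's (via $H_1(\Lambda)\twoheadrightarrow H_1(L)$). But you then take an unnecessary detour. Once $e^s\alpha|_L$ is exact, Stokes gives that \emph{every} unpunctured holomorphic disk bounding $L$ has zero symplectic area, hence is constant; there is no need to split into cases on whether $u_Y$ is constant, to invoke Theorem~\ref{liftthm}, or to choose a special Morse function as in Lemmas~\ref{hl2} and~\ref{vfill}. The paper's proof is exactly this one line: exactness plus stability (which forces a nonconstant component in any configuration with no inputs) finishes the argument.

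Your ``translation along $\Upsilon$'' step has a genuine gap. Flowing along a vector field tangent to $\Upsilon\subset\R\times S^1$ is not a symmetry of the $J$-holomorphic equation on $\R\times Z$: it does not extend to a symplectomorphism of $X$, let alone one preserving $J$, so it does not produce a one-parameter family of holomorphic disks and cannot be used to preclude rigidity. Theorem~\ref{liftthm} is also inapplicable here as stated, since it classifies lifts bounding $\R\times\Lambda$, not $\Upsilon\Lambda$.

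Your concern about once-punctured disks with a single outgoing Reeb chord is legitimate in principle---exactness of $L$ does not by itself bound their energy---but your proposed resolution does not work for the reason above. In the paper's intended use (and in the tameness clause of the same lemma) one has $\Lambda_+=\emptyset$, so obstruction-type configurations carry no outgoing punctures and the issue does not arise; the paper's brief argument should be read in that context.
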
 

\begin{proof}  We first check the Lagrangian condition.  The tangent space $T(\Upsilon \Lambda)$
is spanned by  the images of $T\Upsilon$ and $T\Lambda$.  Let $v,w \in \Vect(L)$
 be vector fields in the images of $\Vect(\Upsilon)$ and
 $\Vect(\Lambda)$ respectively.   The contraction of $v,w$ with the
 symplectic form is 
\[ \d (e^s \alpha)  (v,w) = e^s \d \alpha(v,w) + e^s \d s \alpha (v,w)   = 0 \] 
since $\iota(v) \d \alpha = \iota(w) (\d s \wedge \alpha)  = 0 $.
It follows that $L$ is Lagrangian.

Next we check the tameness conditions.  By assumption
 $Y$ has integral symplectic class 
and $\Lambda_+$ is empty.   
The condition \cite[\ref{I-p2}]{BCSW1} holds since 
\[ c_1(\ol{X}) - [Y_-]^\dual = p^* c_1(Y). \] 
The conditions \ref{I-p1} and \ref{I-p3} in \cite{BCSW1} hold trivially.
Suppose $\Upsilon$ is simply connected, that is, each component is an interval. 
 The first homology $H_1(\Lambda)$ surjects onto $H_1(L)$, and it follows that $L$ is exact and bounds no holomorphic disks.  Therefore, $L$ is unobstructed by Lemma \ref{lem:exactunobs}.
\end{proof} 

\subsection{Invariance of Legendrian contact homology} 

The results of the previous section allow us to prove that $HE(\Lambda)$
is independent of the choice of almost complex structure and vector fields.

\begin{lemma} For any tame pair $(Z,\Lambda)$
the trivial cobordism pair $(\R \times Z, \R \times \Lambda)$ is unobstructed.  
\end{lemma}

\begin{proof} Indeed, $L = \R \times \Lambda$ bounds no holomorphic
 disks without punctures, by the angle change formula \cite[Lemma \ref{I-anglechange}]{BCSW1}.  It follows that the curvature $m(b)$ of the trivial chain $b = 0 $ vanishes.   \end{proof} 
 
\begin{lemma}  \label{lem:identity} 
Let $\Lambda_- = \Lambda_+$, and suppose that $\R \times Z$ is equipped with a translation-invariant almost complex structure and a Morse datum 
so that the vector field $\zeta_L$ is translation-invariant.  
Then the chain map $\varphi: CE(\Lambda_-) \to CE(\Lambda_+)$ is the identity map. 
\end{lemma} 

\begin{proof} We check that  in this case the only rigid maps contributing to the cobordism map are trivial cylinder.  By assumption $\Lambda_ - = \Lambda_+$ and each moduli space has the action of translations since the  isotopy from $J_-$ to $J_+$ and $\zeta_{\black,-}$ to $\zeta_{\black,+}$ is invariant under  the $\R$-action.  It follows that the only rigid configurations are those that  are invariant under the $\R$-action, so are trivial cylinders with the same incoming and outgoing Reeb chord. The induced map $\varphi: CE(\Lambda_\pm)  \to CE(\Lambda_\pm)$ is therefore  the identity map.  
 \end{proof} 
 
\begin{theorem} \label{thm:indep} Suppose that $(Z,\Lambda)$ is a tame pair.   Given two choices of cylindrical almost complex structures, Morse data and perturbations there exists a homotopy equivalence between the corresponding chain complexes $CE(\Lambda)$ and $CE(\Lambda)'$.   Furthermore, this homotopy equivalence may be taken to preserve classical sectors, that is, map $CE_\black(\Lambda)$ to $CE_\black(\Lambda)'$ and intertwine with the projections of $CE_\black(\Lambda) \to \mhat{G}(\Lambda)$ and $CE_\black(\Lambda)' \to \mhat{G}(\Lambda)$ given by taking the coefficient of the empty word. As a result, the contact homology $HE(\Lambda)$ is independent, up to isomorphism, of all such choices. 
\end{theorem} 

\begin{proof}  For convenience we recall the choices involved.   Let $J_\pm, \zeta_{\black, \pm}, \zeta_{\white,\pm}$ be almost complex structures resp. vector fields and  $\ul{P}_\pm =(P_{\Gamma,\pm})$ two choices of perturbations making  all moduli spaces of expected dimension at most one regular. 
Let $CE(\Lambda)_\pm$ denote the corresponding Chekanov-Eliashberg algebras.

We wish to extend the given data over the symplectization, with the given limits
on the positive and negative ends, so that the corresponding cobordism maps 
induce isomorphisms of contact homology.   First, extend the almost complex structures and vector fields 
\[ J_\pm \in \J(\R \times Z_\pm), \quad  \zeta_{\black,\pm}  \in \Vect(\R \times \Lambda_\pm) \] 
over $X = \R \times Z$ to an almost complex structure and  vector field 
\[ J \in \J(X), \quad \zeta_\black \in \Vect(L)  \] 
that agree with the given ones on the cylindrical ends.  Since $\zeta_{\black,\pm}$ is positive, we may assume that its extension $\zeta_\black$ has no zeroes on $\R \times Z$. 
 
Given two choices of almost complex structures, Morse data,  and perturbation data
we obtain from the trivial cobordism equipped with almost complex structures, Morse data and perturbations extending the given ones on the ends a chain map 
\[ {\varphi} : CE(\Lambda)_+ \to CE(\Lambda)_- .\]
By Theorem \ref{thm:chainh}, any homotopy between two different choices $J', \zeta'_\black$
and $J'', \zeta''_\black$ (again with no zeroes, and defining chain maps $\varphi',\varphi''$) defines a chain homotopy
\[ h :  CE(\Lambda)_+ \to CE(\Lambda)_- \] 
satisfying 
\[ {\varphi}'' - {\varphi}' = \delta_+ h + h \delta_- .\]
Similarly, we have a map 
\[ \varphi'': CE(\Lambda)_- \to CE(\Lambda)_+  \]
inducing a map in homology independent of all choices up to chain homotopy.
 By the composition Theorem \ref{composition}, we have
\[ [ \varphi''] \circ [\varphi' ] = \on{Id}_{HE(\Lambda_-)},
\quad [ \varphi'] \circ [\varphi''] = \on{Id}_{HE(\Lambda_+)} \] 
so that $[\varphi']$ and $[\varphi'']$ are inverses.

By the angle-change formula in \cite[Lemma \ref{I-anglechange}]{BCSW1} one sees that if the disk
has no incoming puncture then it has no outgoing punctures either,  so the classical sector $CE_\black(\Lambda)$ maps to $CE_\black(\Lambda)'$.  Furthermore, if the input is from the classical generators $\cI_\black(\Lambda)$ then all disks are constant and so the map $CE_\black(\Lambda) \to CE_\black(\Lambda)'$ preserves the length filtration, and in particular, maps words of positive 
length to words of positive length.  It follows that the chain map intertwines with projection to the span of the zero length word, as claimed.  
\end{proof}

\begin{corollary}
Suppose that $\Lambda_0, \Lambda_1$ are isotopic Legendrians in $(Z,\alpha)$. 
Then $HE(\Lambda_0)$ is isomorphic to $HE(\Lambda_1)$.
\end{corollary}

\begin{proof}  By Lemma I-\ref{I-isotopy}, there exist cobordisms  $L_{01}, L_{10}$ between $\Lambda_0$ and $\Lambda_1$.  Consider the associated chain maps $\varphi(L_{01},0)$ and $\varphi(L_{10}, 0)$
map $HE(\Lambda_0)$ to $HE(\Lambda_1)$ and vice versa. By the composition theorem, 
composition $\varphi(L_{01},0) \circ \varphi(L_{10},0)$ is the map associated
to the concatenation $L_{01} \circ L_{10}$ of the two cobordisms, in the trivial 
cobordism $\R \times Z$ equipped with the symplectic form from Lemma I-\ref{I-isotopy}.
One sees easily  that this symplectic cobordism, as well as the Lagrangian cobordisms
$L_{01} \circ L_{10}$, is isotopic to the trivial cobordism $(\R \times Z, \R \times \Lambda_0)$.  By Lemma \ref{lem:isotope}, there exists a symplectomorphism mapping 
$L_{01} \circ L_{10}$ to the trivial cobordism.  By Lemma \ref{lem:identity}, 
$\varphi(L_{01}) \circ \varphi(L_{10})$ is the identity.  Combining with the statement for the composition in reverse order, it follows that $HE(\Lambda_0)$ is isomorphic to $HE(\Lambda_1)$.
\end{proof}

\begin{remark}  Naturally one expects the version of Legendrian contact
homology here to be related to the one described in  Ekholm-Etnyre-Sullivan \cite{ees:leg}, in cases where both are defined.  The natural expectation would be that the complexes on $CE_\white(\Lambda)$
(that is, projecting out the classical generators) would be chain homotopic. 
However, this would require a perturbation scheme allowing generic contact forms, which is certainly outside the scope of this paper.
\end{remark}

\bibliography{leg}{}

@misc {BCSW1,
    AUTHOR = {Blakey, Kenneth and Chanda, Soham and Sun, Yuhan and Woodward, Chris},
     TITLE = {Augmentation Varieties and Disk Potentials {I}},
    YEAR = {2024},
    NOTE = {\url{https://arxiv.org/abs/2310.17821}}
}

@misc {BCSW3,
    AUTHOR = {Blakey, Kenneth and Chanda, Soham and Sun, Yuhan and Woodward, Chris},
     TITLE = {Augmentation Varieties and Disk Potentials {III}},
    YEAR = {2024},
    NOTE = {\url{https://arxiv.org/abs/2401.13024}}
}

@misc{aganagic,
      title={Mirror Symmetry, {D}-Branes and Counting Holomorphic Discs}, 
      author={Mina Aganagic and Cumrun Vafa},
      year={2000},
      note={\url{https://arxiv.org/abs/hep-th/0012041}},
      archivePrefix={arXiv},
      primaryClass={hep-th}
}

@article {torhorst,
    AUTHOR = {Torhorst, Marie},
     TITLE = {\"{U}ber den {R}and der einfach zusammenh\"{a}ngenden ebenen
              {G}ebiete},
   JOURNAL = {Math. Z.},
  FJOURNAL = {Mathematische Zeitschrift},
    VOLUME = {9},
      YEAR = {1921},
    NUMBER = {1-2},
     PAGES = {44--65},
      ISSN = {0025-5874},
   MRCLASS = {DML},
  MRNUMBER = {1544451},
       DOI = {10.1007/BF01378335},
       URL = {https://doi-org.proxy.libraries.rutgers.edu/10.1007/BF01378335},
}

@article{Rempe,
	doi = {10.1112/blms/bdn061},
  
	url = {https://doi.org/10.1112%2Fblms%2Fbdn061},
  
	year = 2008,
	month = {jul},
  
	publisher = {Wiley},
  
	volume = {40},
  
	number = {5},
  
	pages = {817--826},
  
	author = {Lasse Rempe},
  
	title = {On prime ends and local connectivity},
  
	journal = {Bulletin of the London Mathematical Society}
}

@misc{songyu,
      title={Open/Closed {B}{P}{S} Correspondence and Integrality}, 
      author={Song Yu},
      year={2023},
      note={{\url{https://arxiv.org/abs/2307.13542}}},
}

@incollection {ekholm:rational,
    AUTHOR = {Ekholm, Tobias},
     TITLE = {Rational {SFT}, linearized {L}egendrian contact homology, and
              {L}agrangian {F}loer cohomology},
 BOOKTITLE = {Perspectives in analysis, geometry, and topology},
    SERIES = {Progr. Math.},
    VOLUME = {296},
     PAGES = {109--145},
 PUBLISHER = {Birkh\"{a}user/Springer, New York},
      YEAR = {2012},
   MRCLASS = {53D42 (53D35 53D40)},
  MRNUMBER = {2884034},
MRREVIEWER = {Janko Latschev},
       DOI = {10.1007/978-0-8176-8277-4\_6},
       URL = {https://doi-org.proxy.libraries.rutgers.edu/10.1007/978-0-8176-8277-4_6},
}

@article {etnyrehonda,
    AUTHOR = {Etnyre, John B. and Honda, Ko},
     TITLE = {On symplectic cobordisms},
   JOURNAL = {Math. Ann.},
  FJOURNAL = {Mathematische Annalen},
    VOLUME = {323},
      YEAR = {2002},
    NUMBER = {1},
     PAGES = {31--39},
      ISSN = {0025-5831},
   MRCLASS = {57R17 (53D35)},
  MRNUMBER = {1906906},
MRREVIEWER = {Vincent Colin},
       DOI = {10.1007/s002080100292},
       URL = {https://doi-org.proxy.libraries.rutgers.edu/10.1007/s002080100292},
}

@article {joyce,
    AUTHOR = {Joyce, Dominic},
     TITLE = {Special {L}agrangian submanifolds with isolated conical
              singularities. {V}. {S}urvey and applications},
   JOURNAL = {J. Differential Geom.},
  FJOURNAL = {Journal of Differential Geometry},
    VOLUME = {63},
      YEAR = {2003},
    NUMBER = {2},
     PAGES = {279--347},
      ISSN = {0022-040X},
   MRCLASS = {53C38 (32Q25)},
  MRNUMBER = {2015549},
MRREVIEWER = {Tommaso Pacini},
       URL = {http://projecteuclid.org.proxy.libraries.rutgers.edu/euclid.jdg/1090426679},
}

@article {akaho,
    AUTHOR = {Akaho, Manabu and Joyce, Dominic},
     TITLE = {Immersed {L}agrangian {F}loer theory},
   JOURNAL = {J. Differential Geom.},
  FJOURNAL = {Journal of Differential Geometry},
    VOLUME = {86},
      YEAR = {2010},
    NUMBER = {3},
     PAGES = {381--500},
      ISSN = {0022-040X,1945-743X},
   MRCLASS = {53D40 (53D12 53D37)},
  MRNUMBER = {2785840},
MRREVIEWER = {Timothy\ Perutz},
       URL = {http://projecteuclid.org/euclid.jdg/1303219427},
}

@misc{oh:sft,
      title={Gluing theories of contact instantons and of pseudoholomoprhic curves in {SFT}}, 
      author={Yong-Geun Oh},
      year={2022},
      note={\url{https://arxiv.org/abs/2205.00370}}
}

@article {flips,
    AUTHOR = {Charest, Fran\c{c}ois and Woodward, Chris T.},
     TITLE = {Floer cohomology and flips},
   JOURNAL = {Mem. Amer. Math. Soc.},
  FJOURNAL = {Memoirs of the American Mathematical Society},
    VOLUME = {279},
      YEAR = {2022},
    NUMBER = {1372},
     PAGES = {v+166},
      ISSN = {0065-9266},
      ISBN = {978-1-4704-5310-7; 978-1-4704-7226-9},
   MRCLASS = {53D40},
  MRNUMBER = {4464438},
       DOI = {10.1090/memo/1372},
       URL = {https://doi-org.proxy.libraries.rutgers.edu/10.1090/memo/1372},
}

@book {ms:jh,
    AUTHOR = {McDuff, Dusa and Salamon, Dietmar},
     TITLE = {{$J$}-holomorphic curves and symplectic topology},
    SERIES = {American Mathematical Society Colloquium Publications},
    VOLUME = {52},
   EDITION = {Second},
 PUBLISHER = {American Mathematical Society, Providence, RI},
      YEAR = {2012},
     PAGES = {xiv+726},
      ISBN = {978-0-8218-8746-2},
   MRCLASS = {53D45 (32Q65 53D35)},
  MRNUMBER = {2954391},
MRREVIEWER = {Mark Alan Branson},
}

@article {chooh:fano,
    AUTHOR = {Cho, Cheol-Hyun and Oh, Yong-Geun},
     TITLE = {Floer cohomology and disc instantons of {L}agrangian torus
              fibers in {F}ano toric manifolds},
   JOURNAL = {Asian J. Math.},
  FJOURNAL = {Asian Journal of Mathematics},
    VOLUME = {10},
      YEAR = {2006},
    NUMBER = {4},
     PAGES = {773--814},
      ISSN = {1093-6106},
   MRCLASS = {53D40 (14J45 14J81 14M25 53D12 81T40)},
  MRNUMBER = {2282365},
MRREVIEWER = {Chien-Hao Liu},
       DOI = {10.4310/AJM.2006.v10.n4.a10},
       URL = {https://doi-org.proxy.libraries.rutgers.edu/10.4310/AJM.2006.v10.n4.a10},
}

@article {cm:trans,
    AUTHOR = {Cieliebak, Kai and Mohnke, Klaus},
     TITLE = {Symplectic hypersurfaces and transversality in
              {G}romov-{W}itten theory},
   JOURNAL = {J. Symplectic Geom.},
  FJOURNAL = {The Journal of Symplectic Geometry},
    VOLUME = {5},
      YEAR = {2007},
    NUMBER = {3},
     PAGES = {281--356},
      ISSN = {1527-5256},
   MRCLASS = {53D45},
  MRNUMBER = {2399678},
MRREVIEWER = {Michael J. Usher},
       URL = {http://projecteuclid.org.proxy.libraries.rutgers.edu/euclid.jsg/1210083200},
}

@article {ees:lch,
    AUTHOR = {Ekholm, Tobias and Etnyre, John and Sullivan, Michael},
     TITLE = {The contact homology of {L}egendrian submanifolds in {${\Bbb
              R}^{2n+1}$}},
   JOURNAL = {J. Differential Geom.},
  FJOURNAL = {Journal of Differential Geometry},
    VOLUME = {71},
      YEAR = {2005},
    NUMBER = {2},
     PAGES = {177--305},
      ISSN = {0022-040X},
   MRCLASS = {53D35 (57R17)},
  MRNUMBER = {2197142},
MRREVIEWER = {Joshua M. Sabloff},
       URL = {http://projecteuclid.org/euclid.jdg/1143651770},
}

@article {ees:leg,
    AUTHOR = {Ekholm, Tobias and Etnyre, John and Sullivan, Michael},
     TITLE = {Legendrian contact homology in {$P\times\Bbb R$}},
   JOURNAL = {Trans. Amer. Math. Soc.},
  FJOURNAL = {Transactions of the American Mathematical Society},
    VOLUME = {359},
      YEAR = {2007},
    NUMBER = {7},
     PAGES = {3301--3335},
      ISSN = {0002-9947},
   MRCLASS = {53D35 (53D40)},
  MRNUMBER = {2299457},
MRREVIEWER = {Michael J. Usher},
       DOI = {10.1090/S0002-9947-07-04337-1},
       URL = {https://doi-org.proxy.libraries.rutgers.edu/10.1090/S0002-9947-07-04337-1},
}

@article {ees:orient,
    AUTHOR = {Ekholm, Tobias and Etnyre, John and Sullivan, Michael},
     TITLE = {Orientations in {L}egendrian contact homology and exact
              {L}agrangian immersions},
   JOURNAL = {Internat. J. Math.},
  FJOURNAL = {International Journal of Mathematics},
    VOLUME = {16},
      YEAR = {2005},
    NUMBER = {5},
     PAGES = {453--532},
      ISSN = {0129-167X},
   MRCLASS = {53D40 (53D12 57R17)},
  MRNUMBER = {2141318},
MRREVIEWER = {Klaus Mohnke},
       DOI = {10.1142/S0129167X05002941},
       URL = {https://doi-org.proxy.libraries.rutgers.edu/10.1142/S0129167X05002941},
}

@article {eens:knot,
    AUTHOR = {Ekholm, Tobias and Etnyre, John B. and Ng, Lenhard and
              Sullivan, Michael G.},
     TITLE = {Knot contact homology},
   JOURNAL = {Geom. Topol.},
  FJOURNAL = {Geometry \& Topology},
    VOLUME = {17},
      YEAR = {2013},
    NUMBER = {2},
     PAGES = {975--1112},
      ISSN = {1465-3060},
   MRCLASS = {57M27 (53D42 57R17)},
  MRNUMBER = {3070519},
MRREVIEWER = {Steven Sivek},
       DOI = {10.2140/gt.2013.17.975},
       URL = {https://doi-org.proxy.libraries.rutgers.edu/10.2140/gt.2013.17.975},
}

@article {ek:ft,
    AUTHOR = {Ekholm, Tobias},
     TITLE = {Morse flow trees and {L}egendrian contact homology in 1-jet
              spaces},
   JOURNAL = {Geom. Topol.},
  FJOURNAL = {Geometry \& Topology},
    VOLUME = {11},
      YEAR = {2007},
     PAGES = {1083--1224},
      ISSN = {1465-3060},
   MRCLASS = {53D40 (57R17)},
  MRNUMBER = {2326943},
MRREVIEWER = {Dragomir L. Dragnev},
       DOI = {10.2140/gt.2007.11.1083},
       URL = {https://doi.org/10.2140/gt.2007.11.1083},
}

@article {BKK,
    AUTHOR = {Bae, Joonghyun and Kang, Jungsoo and Kim, Sungho},
     TITLE = {Rabinowitz {F}loer homology for prequantization bundles and
              {F}loer {G}ysin sequence},
   JOURNAL = {Math. Ann.},
  FJOURNAL = {Mathematische Annalen},
    VOLUME = {390},
      YEAR = {2024},
    NUMBER = {4},
     PAGES = {5299--5381},
      ISSN = {0025-5831,1432-1807},
   MRCLASS = {57R15 (53D40)},
  MRNUMBER = {4816112},
MRREVIEWER = {Sheila\ Sandon},
       DOI = {10.1007/s00208-024-02878-w},
       URL = {https://doi.org/10.1007/s00208-024-02878-w},
}

@article {dr:bs,
    AUTHOR = {Dimitroglou Rizell, Georgios and Golovko, Roman},
     TITLE = {Legendrian submanifolds from {B}ohr–{S}ommerfeld covers of monotone {L}agrangian tori},
    JOURNAL = {Comm. Anal. Geom.},
    VOLUME = {31},
      YEAR = {2023},
    NUMBER = {4},
     PAGES = {905--978},
}

@article {ekholmng:higher,
    AUTHOR = {Ekholm, Tobias and Ng, Lenhard},
     TITLE = {Higher genus knot contact homology and recursion for colored
              {HOMFLY}-{PT} polynomials},
   JOURNAL = {Adv. Theor. Math. Phys.},
  FJOURNAL = {Advances in Theoretical and Mathematical Physics},
    VOLUME = {24},
      YEAR = {2020},
    NUMBER = {8},
     PAGES = {2067--2145},
      ISSN = {1095-0761},
   MRCLASS = {53D42 (32G15 57K14 81T30 81T45)},
  MRNUMBER = {4320068},
MRREVIEWER = {Roman Golovko},
       DOI = {10.4310/ATMP.2020.v24.n8.a3},
       URL = {https://doi-org.proxy.libraries.rutgers.edu/10.4310/ATMP.2020.v24.n8.a3},
}

@article {egh,
    AUTHOR = {Eliashberg, Yakov and Givental, Alexander and Hofer, Helmut},
     TITLE = {Introduction to symplectic field theory},
      NOTE = {GAFA 2000 (Tel Aviv, 1999)},
   JOURNAL = {Geom. Funct. Anal.},
  FJOURNAL = {Geometric and Functional Analysis},
      YEAR = {2000},
    NUMBER = {Special Volume, Part II},
     PAGES = {560--673},
      ISSN = {1016-443X},
   MRCLASS = {53D45 (53D40)},
  MRNUMBER = {1826267},
MRREVIEWER = {Kai Cieliebak},
       DOI = {10.1007/978-3-0346-0425-3\_4},
       URL = {https://doi.org/10.1007/978-3-0346-0425-3_4},
}

@book {fooo:part1,
    AUTHOR = {Fukaya, Kenji and Oh, Yong-Geun and Ohta, Hiroshi and Ono,
              Kaoru},
     TITLE = {Lagrangian intersection {F}loer theory: anomaly and
              obstruction. {P}art {I}},
    SERIES = {AMS/IP Studies in Advanced Mathematics},
    VOLUME = {46},
 PUBLISHER = {American Mathematical Society, Providence, RI; International
              Press, Somerville, MA},
      YEAR = {2009},
     PAGES = {xii+396},
      ISBN = {978-0-8218-4836-4},
   MRCLASS = {53D40 (53D12 53D37)},
  MRNUMBER = {2553465},
MRREVIEWER = {Michael J. Usher},
       DOI = {10.1090/crmp/049/07},
       URL = {https://doi-org.proxy.libraries.rutgers.edu/10.1090/crmp/049/07},
}

@book {fooo:part2,
    AUTHOR = {Fukaya, Kenji and Oh, Yong-Geun and Ohta, Hiroshi and Ono,
              Kaoru},
     TITLE = {Lagrangian intersection {F}loer theory: anomaly and
              obstruction. {P}art {II}},
    SERIES = {AMS/IP Studies in Advanced Mathematics},
    VOLUME = {46},
 PUBLISHER = {American Mathematical Society, Providence, RI; International
              Press, Somerville, MA},
      YEAR = {2009},
     PAGES = {i--xii and 397--805},
      ISBN = {978-0-8218-4837-1},
   MRCLASS = {53D40 (53D12)},
  MRNUMBER = {2548482},
MRREVIEWER = {Michael J. Usher},
       DOI = {10.1090/amsip/046.2},
       URL = {https://doi-org.proxy.libraries.rutgers.edu/10.1090/amsip/046.2},
}

@article {le:sy,
    AUTHOR = {Lerman, Eugene},
     TITLE = {Symplectic cuts},
   JOURNAL = {Math. Res. Lett.},
  FJOURNAL = {Mathematical Research Letters},
    VOLUME = {2},
      YEAR = {1995},
    NUMBER = {3},
     PAGES = {247--258},
      ISSN = {1073-2780},
   MRCLASS = {58F05 (57S25)},
  MRNUMBER = {1338784},
MRREVIEWER = {Hansj\"{o}rg Geiges},
       DOI = {10.4310/MRL.1995.v2.n3.a2},
       URL = {https://doi-org.proxy.libraries.rutgers.edu/10.4310/MRL.1995.v2.n3.a2},
}

@article {sabloff,
    AUTHOR = {Sabloff, Joshua M.},
     TITLE = {Invariants of {L}egendrian knots in circle bundles},
   JOURNAL = {Commun. Contemp. Math.},
  FJOURNAL = {Communications in Contemporary Mathematics},
    VOLUME = {5},
      YEAR = {2003},
    NUMBER = {4},
     PAGES = {569--627},
      ISSN = {0219-1997},
   MRCLASS = {57R17 (53D35 57M27)},
  MRNUMBER = {2003211},
MRREVIEWER = {Vladimir V. Tchernov},
       DOI = {10.1142/S0219199703001075},
       URL = {https://doi-org.proxy.libraries.rutgers.edu/10.1142/S0219199703001075},
}

@misc{asplund,
AUTHOR={Asplund, Johan},
TITLE={Contact homology of {L}egendrian knots in five-dimensional circle bundles}, 
note={\url{https://www.diva-portal.org/smash/get/diva2:940045/FULLTEXT01.pdf}},
}

@article {oh:rh,
    AUTHOR = {Oh, Yong-Geun},
     TITLE = {Riemann-{H}ilbert problem and application to the perturbation
              theory of analytic discs},
   JOURNAL = {Kyungpook Math. J.},
  FJOURNAL = {Kyungpook Mathematical Journal},
    VOLUME = {35},
      YEAR = {1995},
    NUMBER = {1},
     PAGES = {39--75},
      ISSN = {1225-6951},
   MRCLASS = {32D99 (32E99 58B99)},
  MRNUMBER = {1345070},
MRREVIEWER = {Karl Friedrich Siburg},
}

@article {ainfty,
    AUTHOR = {Ma'u, Sikimeti and Wehrheim, Katrin and Woodward, Chris},
     TITLE = {{$A_\infty$} functors for {L}agrangian correspondences},
   JOURNAL = {Selecta Math. (N.S.)},
  FJOURNAL = {Selecta Mathematica. New Series},
    VOLUME = {24},
      YEAR = {2018},
    NUMBER = {3},
     PAGES = {1913--2002},
      ISSN = {1022-1824},
   MRCLASS = {53D40},
  MRNUMBER = {3816496},
MRREVIEWER = {Michael J. Usher},
       DOI = {10.1007/s00029-018-0403-5},
       URL = {https://doi-org.proxy.libraries.rutgers.edu/10.1007/s00029-018-0403-5},
}

@article {chanda,
    AUTHOR = {Chanda, Soham},
     TITLE = {Floer cohomology and higher mutations},
   JOURNAL = {Adv. Math.},
  FJOURNAL = {Advances in Mathematics},
    VOLUME = {479},
      YEAR = {2025},
     PAGES = {Paper No. 110425, 95},
      ISSN = {0001-8708,1090-2082},
   MRCLASS = {53D40 (53D12)},
  MRNUMBER = {4930408},
       DOI = {10.1016/j.aim.2025.110425},
       URL = {https://doi.org/10.1016/j.aim.2025.110425},
}

@article {kalman,
    AUTHOR = {K\'{a}lm\'{a}n, Tam\'{a}s},
     TITLE = {Contact homology and one parameter families of {L}egendrian
              knots},
   JOURNAL = {Geom. Topol.},
  FJOURNAL = {Geometry and Topology},
    VOLUME = {9},
      YEAR = {2005},
     PAGES = {2013--2078},
      ISSN = {1465-3060},
   MRCLASS = {53D40 (57M25)},
  MRNUMBER = {2209366},
MRREVIEWER = {Joshua M. Sabloff},
       DOI = {10.2140/gt.2005.9.2013},
       URL = {https://doi-org.proxy.libraries.rutgers.edu/10.2140/gt.2005.9.2013},
}

@article {ehk,
    AUTHOR = {Ekholm, Tobias and Honda, Ko and K\'{a}lm\'{a}n, Tam\'{a}s},
     TITLE = {Legendrian knots and exact {L}agrangian cobordisms},
   JOURNAL = {J. Eur. Math. Soc. (JEMS)},
  FJOURNAL = {Journal of the European Mathematical Society (JEMS)},
    VOLUME = {18},
      YEAR = {2016},
    NUMBER = {11},
     PAGES = {2627--2689},
      ISSN = {1435-9855},
   MRCLASS = {53D42 (53D10 53D40 57M27 57R90)},
  MRNUMBER = {3562353},
MRREVIEWER = {Georgios Dimitroglou Rizell},
       DOI = {10.4171/JEMS/650},
       URL = {https://doi-org.proxy.libraries.rutgers.edu/10.4171/JEMS/650},
}

@book {ki:coh,
    AUTHOR = {Kirwan, Frances Clare},
     TITLE = {Cohomology of quotients in symplectic and algebraic geometry},
    SERIES = {Mathematical Notes},
    VOLUME = {31},
 PUBLISHER = {Princeton University Press, Princeton, NJ},
      YEAR = {1984},
     PAGES = {i+211},
      ISBN = {0-691-08370-3},
   MRCLASS = {58F05 (14D25 14L30)},
  MRNUMBER = {766741},
MRREVIEWER = {Martin A. Guest},
       DOI = {10.2307/j.ctv10vm2m8},
       URL = {https://doi.org/10.2307/j.ctv10vm2m8},
}

@article {vianna:inf,
    AUTHOR = {Vianna, Renato Ferreira de Velloso},
     TITLE = {Infinitely many exotic monotone {L}agrangian tori in
              {$\Bbb{CP}^2$}},
   JOURNAL = {J. Topol.},
  FJOURNAL = {Journal of Topology},
    VOLUME = {9},
      YEAR = {2016},
    NUMBER = {2},
     PAGES = {535--551},
      ISSN = {1753-8416},
   MRCLASS = {53D12 (53D37 53D42)},
  MRNUMBER = {3509972},
MRREVIEWER = {Stefan Nemirovski},
       DOI = {10.1112/jtopol/jtw002},
       URL = {https://doi-org.proxy.libraries.rutgers.edu/10.1112/jtopol/jtw002},
}

@article {delzant,
    AUTHOR = {Delzant, Thomas},
     TITLE = {Hamiltoniens p\'{e}riodiques et images convexes de l'application
              moment},
   JOURNAL = {Bull. Soc. Math. France},
  FJOURNAL = {Bulletin de la Soci\'{e}t\'{e} Math\'{e}matique de France},
    VOLUME = {116},
      YEAR = {1988},
    NUMBER = {3},
     PAGES = {315--339},
      ISSN = {0037-9484},
   MRCLASS = {58F05},
  MRNUMBER = {984900},
MRREVIEWER = {J. J. Duistermaat},
       URL = {http://www.numdam.org.proxy.libraries.rutgers.edu/item?id=BSMF_1988__116_3_315_0},
}

@article {karlsson:coherent,
    AUTHOR = {Karlsson, Cecilia},
     TITLE = {A note on coherent orientations for exact {L}agrangian
              cobordisms},
   JOURNAL = {Quantum Topol.},
  FJOURNAL = {Quantum Topology},
    VOLUME = {11},
      YEAR = {2020},
    NUMBER = {1},
     PAGES = {1--54},
      ISSN = {1663-487X},
   MRCLASS = {53D12 (53D42 57R90)},
  MRNUMBER = {4071329},
MRREVIEWER = {Paolo Ghiggini},
       DOI = {10.4171/QT/132},
       URL = {https://doi.org/10.4171/QT/132},
}

@book {se:bo,
    AUTHOR = {Seidel, Paul},
     TITLE = {Fukaya categories and {P}icard-{L}efschetz theory},
    SERIES = {Zurich Lectures in Advanced Mathematics},
 PUBLISHER = {European Mathematical Society (EMS), Z\"{u}rich},
      YEAR = {2008},
     PAGES = {viii+326},
      ISBN = {978-3-03719-063-0},
   MRCLASS = {53D40 (16E45 32Q65 53D12)},
  MRNUMBER = {2441780},
MRREVIEWER = {Timothy Perutz},
       DOI = {10.4171/063},
       URL = {https://doi.org/10.4171/063},
}

@article {ref1,
    AUTHOR = {Golovko, Roman},
     TITLE = {A note on the infinite number of exact {L}agrangian fillings
              for spherical spuns},
   JOURNAL = {Pacific J. Math.},
  FJOURNAL = {Pacific Journal of Mathematics},
    VOLUME = {317},
      YEAR = {2022},
    NUMBER = {1},
     PAGES = {143--152},
      ISSN = {0030-8730},
   MRCLASS = {53D12 (53D42)},
  MRNUMBER = {4440942},
       DOI = {10.2140/pjm.2022.317.143},
       URL = {https://doi-org.proxy.libraries.rutgers.edu/10.2140/pjm.2022.317.143},
}

@article {ref3,
    AUTHOR = {Etg\"{u}, Tolga},
     TITLE = {Nonfillable {L}egendrian knots in the 3-sphere},
   JOURNAL = {Algebr. Geom. Topol.},
  FJOURNAL = {Algebraic \& Geometric Topology},
    VOLUME = {18},
      YEAR = {2018},
    NUMBER = {2},
     PAGES = {1077--1088},
      ISSN = {1472-2747},
   MRCLASS = {57R17 (16E45 53D42)},
  MRNUMBER = {3773748},
MRREVIEWER = {Laura P. Starkston},
       DOI = {10.2140/agt.2018.18.1077},
       URL = {https://doi-org.proxy.libraries.rutgers.edu/10.2140/agt.2018.18.1077},
}

@article {riz:lift,
    AUTHOR = {Dimitroglou Rizell, Georgios},
     TITLE = {Lifting pseudo-holomorphic polygons to the symplectisation of
              {$P\times\Bbb{R}$} and applications},
   JOURNAL = {Quantum Topol.},
  FJOURNAL = {Quantum Topology},
    VOLUME = {7},
      YEAR = {2016},
    NUMBER = {1},
     PAGES = {29--105},
      ISSN = {1663-487X},
   MRCLASS = {53D42 (53D12 53D40)},
  MRNUMBER = {3459958},
MRREVIEWER = {Weiwei Wu},
       DOI = {10.4171/QT/73},
       URL = {https://doi-org.proxy.libraries.rutgers.edu/10.4171/QT/73},
}
\bibliographystyle{plain}
\end{document}